\documentclass[a4paper,10pt]{article}%需要使用pdfLaTex进行编译

\usepackage{latexsym}
\usepackage{mathrsfs}
\usepackage{amsfonts,amsmath,amssymb}
\usepackage{indentfirst}
\usepackage{subeqnarray}
\usepackage{verbatim}
\usepackage{graphicx}
\usepackage{epstopdf}
\usepackage{stmaryrd}
\usepackage{multirow}
\usepackage{diagbox}
\usepackage{subcaption}
\usepackage{float}
\usepackage{booktabs}
\usepackage{xcolor}
\usepackage{extarrows}%新插的包
\usepackage{bm}

\newcommand{\bx}{\mbox{\boldmath{$x$}}}

\newcommand{\bz}{\mbox{\boldmath{$z$}}}

\newcommand{\bb}{\mbox{\boldmath{$b$}}}

\newcommand{\bn}{\mbox{\boldmath{$n$}}}

\newcommand{\bw}{\mbox{\boldmath{$w$}}}
\newcommand{\bW}{\mbox{\boldmath{$W$}}}

\newcommand{\by}{\mbox{\boldmath{$y$}}}

\newcommand{\R}{\mathbb{R}}

\newcommand{\cG}{\mathcal{G}}
\newcommand{\cL}{\mathcal{L}}
\newcommand{\cN}{\mathcal{N}}

\newcommand{\balpha}{\bm{\alpha}}

\newcommand{\bxi}{\bm{\xi}}
\newcommand{\dbx}{\mathrm{d}\bx}

\newcommand{\Real}{\mathbb{R}}
\newcommand{\set}[1]{\left\{#1\right\}}

\newtheorem{theorem}{Theorem}[section]
\newtheorem{lemma}[theorem]{Lemma}
\newtheorem{assumption}[theorem]{Assumption}
\newtheorem{corollary}[theorem]{Corollary}
\newtheorem{definition}[theorem]{Definition}
\newtheorem{example}[theorem]{Example}

\newtheorem{remark}[theorem]{Remark}
\numberwithin{equation}{section}

\newenvironment{proof}[1][Proof]{\textbf{#1.} }
{\ \rule{0.75em}{0.75em}\smallskip}

\usepackage[pdftex,unicode,colorlinks,linkcolor=blue]{hyperref}

\begin{document}
	
	\begin{center}
		\Large\bf Domain-Decomposed Randomized Neural Networks for \\ Partial Differential Equations in Unbounded Domains
	\end{center}
	
	\begin{center}
		Haixin Wang\footnote{School of Mathematics and Statistics, Xi'an Jiaotong University, Xi'an, Shaanxi 710049, P.R. China. Email: {\tt wanghaixin@stu.xjtu.edu.cn}.},
		\quad 
		Haoning Dang\footnote{School of Mathematics and Statistics, Xi'an Jiaotong University, Xi'an, Shaanxi 710049, P.R. China. Email: {\tt haoningdang.xjtu@stu.xjtu.edu.cn}.},
		\quad 
		Fei Wang\footnote{School of Mathematics and Statistics \& State Key Laboratory of Multiphase Flow in Power Engineering, Xi'an Jiaotong University, Xi'an, Shaanxi 710049, China. The work of this author was partially supported by the National Natural Science Foundation of China (Grant No.\ 92470115). Email: {\tt feiwang.xjtu@xjtu.edu.cn}}
		\quad {\rm and}
		\quad
		Shimin Guo\footnote{School of Mathematics and Statistics, Xi'an Jiaotong University, Xi'an, Shaanxi 710049, P.R. China. The work of this author was partially supported by the National Natural Science Foundation of China (Grant No.\ 12271427). Email: {\tt shiminguo@mail.xjtu.edu.cn}.}
	\end{center}

	\medskip

\begin{quote}
{\bf Abstract.}
Partial differential equations on unbounded domains are challenging because the exterior region must be represented without excessive truncation error. Truncation-based methods often require problem-dependent artificial boundary conditions, while global spectral bases may be inefficient for localized structures, irregular geometries, or solutions with different near-field and far-field behaviors. We propose a domain-decomposed randomized neural network framework for such problems. Different randomized subnetworks are assigned to different spatial regimes: a near-field subnetwork captures local and geometric features, whereas a far-field subnetwork represents exterior decay. The subnetworks are coupled by boundary and interface conditions, and only the output-layer coefficients are solved from linear least-squares systems arising from Petrov--Galerkin or collocation formulations. We develop a Petrov--Galerkin method for semi-unbounded elliptic problems and a collocation method for fully unbounded, perforated, and time-dependent problems. A conditional bounded-parameter approximation result is proved in a broken Sobolev norm, together with an error decomposition covering approximation, empirical-consistency/quadrature, and least-squares optimization errors. Numerical experiments for Poisson and time-dependent Schr\"odinger equations demonstrate the accuracy and flexibility of the proposed method.

\end{quote}

	{\bf Keywords.} Randomized neural networks; domain decomposition; unbounded domains; far-field approximation; least-squares methods

	\section{Introduction}

	Partial differential equations posed on unbounded domains arise naturally in wave propagation, quantum mechanics, diffusion, potential theory, and exterior-domain elliptic problems. Their numerical approximation is difficult because the infinite spatial extent cannot be represented by a finite mesh without additional modeling or approximation. As a result, traditional finite element, finite difference, and finite volume methods usually require domain truncation, coordinate mappings, absorbing layers, or other exterior-domain treatments before they can be implemented in practice.

The difficulty is not only geometric but also analytic. The numerical method must preserve the influence of the exterior region, represent the far-field behavior of the solution, and remain accurate near physical boundaries, obstacles, or localized structures. For example, Schr\"odinger equations on unbounded domains involve complex-valued solutions evolving in both space and time, while exterior elliptic problems may exhibit algebraic or exponential decay. These different behaviors motivate numerical methods that can treat near-field and far-field regimes in a flexible but computationally efficient way.

	Existing numerical methods for PDEs on unbounded domains can be broadly classified into two main categories. The first category consists of truncation-based approaches, including artificial boundary conditions, absorbing layers, and finite element, finite difference, or finite volume discretizations on bounded computational domains; representative works include \cite{Carolin2022Magnetostatic,Chen2022TwoGrid,Chen2023Analysis,Ding2023Finite,Silvia2022TwoFEMBEM,guo2023new,Guo2021Finite,kuhn2024explicit,li2019numerical,lin2025finite,Singh2023Rate,Tai2022Numerical,Xie2023fastBDF2}. These methods are flexible and compatible with standard discretizations, but their performance often depends on the quality of the exterior-domain treatment and on the choice of truncation or absorption parameters. The second category uses global or mapped basis functions on unbounded domains, with spectral methods based on Laguerre, Hermite, or related functions being representative examples \cite{Guo2022Dissipation,Ling2023Analysis,Dina2023Tanh,Shen2009spctral,tissaoui2025efficient,Zhang2021Spectral,zhu2024highly}. Such methods can achieve high accuracy for smooth solutions, but they may be less flexible for localized structures, complicated geometries, discontinuities, or solutions with different near-field and far-field behaviors.

These observations suggest that an effective method for unbounded-domain problems should combine far-field representation, geometric flexibility, and computational tractability. The central difficulty is not merely how to truncate the domain, but how to represent the exterior region in a way that remains coupled to the near-field solution. This motivates the use of separate trial spaces for different spatial regimes: a flexible near-field representation for local structures and a far-field representation for decay or asymptotic behavior. The proposed method follows this principle by assigning different randomized neural subnetworks to different subdomains and coupling them through interface equations.

	In recent years, neural-network-based methods have provided an alternative perspective for the numerical solution of partial differential equations. Instead of relying exclusively on mesh-based trial spaces, these methods use neural networks as flexible function approximators and determine the network parameters through collocation, residual minimization, variational principles, or physics-informed loss functions. Representative developments include early neural-network solvers for differential equations (\cite{Dissanayake1994NNPDE,Lagaris1998ANN}), physics-informed neural networks (\cite{Raissi2019PINN,Raissi2017PINNPartI,Raissi2017PINNPartII}), deep Ritz and deep Galerkin type methods (\cite{E2018deepRitz,Sirignano2018DGM}), and related approaches for complex geometries, nonlinear PDEs, variational formulations, and residual minimization (\cite{Jens2018unified,Tang2023Physics,JonathanW2023Greedy}). These works demonstrate the flexibility of neural-network trial spaces, while also motivating further study of accuracy, stability, conditioning, and efficient training for PDE problems on challenging domains.

Although neural-network-based methods are flexible, they usually require nonlinear optimization over a large number of trainable parameters, and the resulting training process may be sensitive to initialization, sampling strategies, and hyperparameter choices. Randomized neural networks (RaNNs) provide an efficient alternative by randomly assigning and then freezing the hidden-layer parameters, so that only the output-layer coefficients are determined from the numerical formulation. Consequently, the computation is often reduced to a linear least-squares problem rather than a fully nonlinear training problem. This idea has been shown to be effective for a variety of PDE problems; see, for example, \cite{dong2021LocalELM,shang2023randomized,sun2022local,chen2022bridging,shang2024randomized,dang2024local,li2025local,zhang2024transferable,shang2025overlapping,sun2026randomized,liu2025integral} and the references therein.
More recently, Dang, Wang, and Jiang introduced adaptive-growth RaNNs for PDE approximation (\cite{dang2024adaptive}), in which the randomized trial space is enriched progressively, while Yang and Wang proposed adaptive-distribution RaNNs (\cite{yang2026adaptive}), where low-dimensional sampling distributions for random parameters are learned to reduce the sensitivity to parameter selection.

 These developments demonstrate the efficiency of RaNN-based discretizations, but they do not by themselves resolve the additional difficulties caused by unbounded domains.
  A single randomized neural trial space with one parameter scale may not approximate both near-field structures and far-field decay accurately. These issues motivate a domain-decomposed RaNN framework in which different subnetworks are assigned to different spatial regimes and coupled by physically motivated interface conditions. In this way, the exterior region is represented by a far-field randomized trial space rather than being removed from the numerical formulation.

The main contributions of this paper are summarized as follows.
\begin{itemize}

    \item We propose a domain-decomposed RaNN framework for PDEs on unbounded domains. The method uses different randomized subnetworks for different parts of the domain, for example a near-field network and a far-field network, so that local structures and far-field decay can be represented by different random feature spaces.

    \item We develop two complementary discretizations. The first one is a domain-decomposed RaNN Petrov--Galerkin method (DD-RaNN-PG) for semi-unbounded elliptic problems, where the test space is constructed from piecewise polynomial functions and the unbounded direction is treated by Gauss-type quadrature. The second one is a domain-decomposed RaNN collocation method (DD-RaNN-CM) for fully unbounded or geometrically complicated domains, including perforated domains and time-dependent Schr\"odinger equations. Both formulations lead to linear least-squares systems for the output-layer coefficients.

    \item We provide a theoretical justification of the proposed framework. In particular, we establish a conditional bounded-parameter broken Sobolev approximation result for domain-decomposed randomized neural trial spaces on unbounded domains. We also decompose the total error into approximation, empirical-consistency/quadrature, and least-squares optimization components under suitable stability and consistency assumptions.
    
\end{itemize}

The rest of this paper is organized as follows.
Section~\ref{sec:framework} introduces the general domain-decomposed RaNN framework, including randomized neural trial spaces, near--far decomposition, and the generic least-squares formulation.
Section~\ref{sec:half} develops the domain-decomposed RaNN Petrov--Galerkin method for semi-unbounded elliptic problems.
Section~\ref{sec:whole} presents the domain-decomposed RaNN collocation method for fully unbounded and time-dependent problems, using a linear Schr\"odinger equation as a representative example.
Section~\ref{sec:analysis} provides the approximation and error analysis.
Section~\ref{sec:num} reports numerical experiments for semi-unbounded, fully unbounded, and time-dependent problems.
Finally, Section~\ref{sec:summary} concludes the paper.

%%%%%%%%%%%%%%%%%%%%%%%%%%%%%%%

\section{General Domain-Decomposed RaNN Framework}
\label{sec:framework}

In this section, we introduce the general domain-decomposed randomized neural network (DD-RaNN) framework used in this paper. The main idea is to represent different spatial regimes of an unbounded-domain solution by different randomized neural subnetworks and to determine only their output-layer coefficients from linear least-squares systems. The concrete Petrov--Galerkin formulation for semi-unbounded elliptic problems and the domain-decomposed collocation formulation for fully unbounded and time-dependent problems will be developed in Sections~\ref{sec:half} and~\ref{sec:whole}, respectively.

\subsection{Randomized Neural Trial Spaces}
\label{subsec:rann-trial-spaces}

Let $\Omega\subset\mathbb{R}^d$ be an unbounded domain, and let the input variable be $\bx\in\Omega$. A fully connected tensor neural network with $l$ hidden layers is written as
	\begin{align*}
		& \Phi^{(1)}(\bx) = \rho(\bW^{(1)}\bx,\bb^{(1)}),\\
		&\Phi^{(j)}(\bx) = \rho(\bW^{(j)}\Phi^{(j-1)}(\bx),\bb^{(j)}),
		\qquad j=2,\ldots,l,\\
		&U(\bx) = \bW^{(l+1)}\Phi^{(l)}(\bx),\quad \rho(\bx,\bb)=\prod_{j=1}^{d}\rho_j(x_j+b_{j}),
	\end{align*}
where $\Phi^{(j)}$ denotes the output vector of the $j$-th hidden layer,  $\bW^{(j)}$ and $\bb^{(j)}$ are the corresponding weight matrix and bias vector, and the tensor-product activation $\rho$ is applied separately to each neuron. The output layer is taken as a bias-free linear map.

In an RaNN, the hidden-layer parameters $\{\bW^{(j)},\bb^{(j)}\}_{j=1}^{l}$ are randomly generated and then fixed. Once these parameters are determined, the components of the last hidden layer define a finite-dimensional randomized trial space $\mathcal{U}(\bW,\bb,\rho):=\mathrm{span}\{\Phi^{(l)}_1,\ldots,\Phi^{(l)}_m\}$, where $m$ is the width of the last hidden layer. An RaNN approximation has the form
\[
u_h(\bx)
=
\sum_{k=1}^{m}\hat u_k\Phi_k^{(l)}(\bx).
\]
Thus the unknowns are only the output-layer coefficients $\{\hat u_k\}_{k=1}^{m}$.

In this paper, two randomization strategies are used. The first one samples both weights and biases from a uniform distribution,
\[
\bW,\bb\sim U(-r,r),
\]
where $r>0$ is a prescribed scale parameter. The second one samples $\bW\sim U(-r,r)$ and constructs the bias vector by
\[
\bb=(-\bW\odot \mathbf R)\mathbf 1,
\qquad
\mathbf R\sim U(-1,1),
\]
where $\odot$ denotes componentwise multiplication and $\mathbf 1$ is the all-ones vector of compatible size. This second choice controls the location of feature centers and is useful for reducing far-field quadrature errors on unbounded domains.

The essential computational feature of RaNNs is that the output-layer coefficients are not obtained by nonlinear training of all network parameters. Instead, they are determined by a global numerical formulation, such as a Petrov--Galerkin or collocation least-squares system. This leads to a linear algebraic problem for the output-layer coefficients.

\subsection{Near--Far Domain Decomposition}
\label{subsec:near-far-decomposition}

For PDEs on unbounded domains, a single randomized feature space with one parameter scale may be inefficient. The solution may have localized structures near physical boundaries or obstacles, while exhibiting decay or different asymptotic behavior in the far field. To represent these different regimes, we decompose the domain into non-overlapping subregions in the sense that
\[
\Omega=\bigcup_{\alpha=1}^{N_{\rm net}}\overline{\Omega_\alpha},
\qquad
\Omega_\alpha\cap\Omega_\beta=\emptyset\quad(\alpha\ne\beta)
\]
up to their common interfaces, and assign an independent randomized subnetwork to each subregion.
The numerical solution is written as
\[
u_h(\bx)
=
\sum_{\alpha=1}^{N_{\rm net}}
\chi_\alpha(\bx)u_h^{(\alpha)}(\bx),
\qquad
u_h^{(\alpha)}(\bx)
=
\sum_{k=1}^{m_\alpha}
\hat u_k^{(\alpha)}\phi_k^{(\alpha)}(\bx),
\]
where $\chi_\alpha$ is the indicator function of $\Omega_\alpha$, and
$\{\phi_k^{(\alpha)}\}_{k=1}^{m_\alpha}$ are the randomized features generated by the $\alpha$-th subnetwork.

For a two-network near--far decomposition, one typically writes
\[
\Omega=\Omega_1\cup\Omega_2,
\]
where $\Omega_1$ is a bounded near-field region and $\Omega_2$ is an exterior unbounded region. The approximation becomes
\[
u_h(\bx)
=
\begin{cases}
\displaystyle
\sum_{k=1}^{m_1}\hat u_k^{(1)}\phi_k^{(1)}(\bx),
& \bx\in\Omega_1,\\[1ex]
\displaystyle
\sum_{k=1}^{m_2}\hat u_k^{(2)}\phi_k^{(2)}(\bx),
& \bx\in\Omega_2.
\end{cases}
\]
For one-dimensional whole-space problems, one may instead use a three-network decomposition consisting of the left far-field region, the central near-field region, and the right far-field region.

The subregions are coupled by physical boundary equations and interface equations. For an interface
\[
\Gamma_{12}:=\partial\Omega_1\cap\partial\Omega_2,
\]
typical coupling conditions are
\[
u_h^{(1)}=u_h^{(2)},
\qquad
\beta_1\nabla u_h^{(1)}\cdot\bn_{12}
+
\beta_2\nabla u_h^{(2)}\cdot\bn_{21}
=0
\quad\text{on }\Gamma_{12},
\]
where $\beta_1$ and $\beta_2$ denote the traces of $\beta$ from the two sides and $\bn_{12}$ is the unit normal pointing from $\Omega_1$ to $\Omega_2$. When $\beta$ is continuous and a single normal direction is fixed, this condition reduces to equality of the normal fluxes.
These equations should be interpreted as coupling equations between two trial spaces. The artificial interface is therefore a domain-decomposition interface, not a truncation boundary where the exterior domain is discarded.

\subsection{Generic Least-Squares Form}
\label{subsec:generic-ls-formulations}

For convenience, consider the model elliptic equation
\begin{equation}
-\nabla\cdot(\beta\nabla u)+\gamma u=f,
\qquad
\beta,\gamma:\Omega\rightarrow\mathbb{R},
\qquad
\bx\in\Omega\subset\mathbb{R}^d.
\label{eq:GF}
\end{equation}
When $\Omega$ is unbounded, the solution is assumed to satisfy a prescribed far-field condition, for example
\[
u(\bx)\rightarrow0
\qquad \text{as } \|\bx\|_2\rightarrow\infty.
\]

If the domain has physical boundaries, boundary conditions may also be imposed. For example, one may prescribe
\begin{equation}
\begin{cases}
u(\bx)=g_D(\bx), & \bx\in\Gamma_D,\\
\beta\nabla u(\bx)\cdot\bn(\bx)=g_N(\bx), & \bx\in\Gamma_N,
\end{cases}
\label{bd:1}
\end{equation}
where $\partial\Omega=\Gamma_D\cup\Gamma_N$, $\Gamma_D\cap\Gamma_N=\emptyset$, and $\bn$ is the unit outward normal vector.

Depending on the discretization, the PDE equations may be obtained either from a weak formulation or from strong-form collocation. In a Petrov--Galerkin formulation, the rows of $\boldsymbol A$ are generated by testing the PDE against prescribed test functions. In a collocation formulation, the rows of $\boldsymbol A$ are generated by enforcing the strong residual at sampling points. In both cases, once the hidden-layer parameters are fixed, the PDE equations are linear with respect to the output-layer coefficients.

After the PDE equations, physical boundary or initial conditions, and interface coupling conditions are discretized, they are assembled into the generic weighted least-squares system
\begin{equation}
\begin{bmatrix}
\boldsymbol A\\
\xi\boldsymbol B\\
\eta\boldsymbol C
\end{bmatrix}
\boldsymbol U
=
\begin{bmatrix}
\boldsymbol L\\
\xi\boldsymbol F\\
\eta\boldsymbol Z
\end{bmatrix}.
\label{eq:generic-stacked-ls}
\end{equation}
Here $\boldsymbol A$ represents the PDE equations, $\boldsymbol B$ represents physical boundary or initial conditions, $\boldsymbol C$ represents interface coupling conditions, and $\xi,\eta>0$ are weighting parameters. Thus the proposed framework reduces the determination of all output-layer coefficients to a linear least-squares problem. The concrete construction of $\boldsymbol A$, $\boldsymbol B$, and $\boldsymbol C$ will be given in Sections~\ref{sec:half} and~\ref{sec:whole}.

\section{Domain-Decomposed RaNN Petrov--Galerkin Method for\\ Semi-Unbounded Elliptic Problems}
\label{sec:half}

In this section, we specialize the general domain-decomposed RaNN framework to semi-unbounded elliptic problems and derive a broken Petrov--Galerkin least-squares formulation, which we call the domain-decomposed RaNN Petrov--Galerkin method (DD-RaNN-PG). We consider
\begin{equation}
\begin{cases}
-\nabla\cdot(\beta(\bx)\nabla u(\bx))+\gamma(\bx)u(\bx)=f(\bx),
& \bx\in\Omega=\mathbb{R}_+^d,\\
\displaystyle \lim_{\|\bx\|_2\rightarrow+\infty}u(\bx)=0.
\end{cases}
\label{eq:half}
\end{equation}
The physical boundary conditions are
\begin{equation}
u(\bx)=g_D(\bx)\quad\text{on }\Gamma_D,
\qquad
\beta(\bx)\nabla u(\bx)\cdot\bn(\bx)=g_N(\bx)
\quad\text{on }\Gamma_N,
\qquad
\partial\Omega=\Gamma_D\cup\Gamma_N,
\quad
\Gamma_D\cap\Gamma_N=\emptyset .
\label{bd2}
\end{equation}

A key difficulty in semi-unbounded problems is that the solution may exhibit different behavior near the physical boundary and in the far field. A single randomized feature distribution is often inefficient for both regimes. Therefore, we decompose the semi-infinite domain into a bounded near-field region and an exterior unbounded region, and use two randomized subnetworks with different parameter ranges.

\subsection{Near--Far Decomposition and Quadrature Grid}
\label{subsec:pg-decomposition-testspace}

Let $x_{\mathrm{cut}}>0$ be a prescribed near--far separation parameter. In this Petrov--Galerkin construction, we use a tensor-product box-type decomposition of the half-space,
\[
\Omega_1=[0,x_{\mathrm{cut}}]^d,
\qquad
\Omega_2=\Omega\setminus\Omega_1,
\qquad
\Omega=\mathbb R_+^d .
\]
For $d=1$, this reduces to the standard decomposition of the half-line into a bounded interval and an exterior interval. For $d\ge2$, the interface is the piecewise smooth box boundary
\[
\Gamma_I:=\partial\Omega_1\cap\partial \Omega_2.
\]
This tensor-product construction is convenient for the Petrov--Galerkin formulation and the associated quadrature. More general interface geometries, such as circular or spherical near--far interfaces, are treated below by the collocation formulation.

We construct the quadrature and test-function grid dimension by dimension. For the near-field interval $[0,x_{\mathrm{cut}}]$, we use a uniform partition
\[
0=x_0<x_1<\cdots<x_{N_1}=x_{\mathrm{cut}},
\qquad
h=\frac{x_{\mathrm{cut}}}{N_1}.
\]
For the exterior interval $(x_{\mathrm{cut}},+\infty)$, we use shifted positive Hermite--Gauss nodes. 
Let
\[
-y_{N_2}<\cdots<-y_1<y_1<\cdots<y_{N_2}
\]
be the $2N_2$ Hermite--Gauss nodes. We retain the positive nodes and shift them to the exterior region:
\[
x_{N_1+k}=x_{\mathrm{cut}}+S\,y_k,
\qquad
k=1,\ldots,N_2,
\]
where $S>0$ is a scaling parameter. Thus the one-dimensional node sequence is
\[
0=x_0< x_1<\cdots<x_{N_1}=x_{\mathrm{cut}}
<
x_{N_1+1}
<
\cdots
<
x_N,
\qquad
N=N_1+N_2.
\]
The tensor product of this node sequence gives the quadrature structure in $\mathbb{R}_+^d$.

The test space is chosen as the standard continuous piecewise linear finite element space associated with this tensor-product grid:
\[
V_h=\mathrm{span}\{v_j\}_{j=1}^{M}.
\]
Since the physical domain is unbounded, the exterior integrals are evaluated numerically by Gauss-type quadrature.

For the two subnetworks, define
\[
U_h^{(1)}=\mathrm{span}\{\phi_k^{(1)}\}_{k=1}^{m_1},
\qquad
U_h^{(2)}=\mathrm{span}\{\phi_k^{(2)}\}_{k=1}^{m_2}.
\]
The functions $\{\phi_k^{(1)}\}$ are used on $\Omega_1$, and the functions $\{\phi_k^{(2)}\}$ are used on $\Omega_2$. The broken trial function is represented as
\begin{equation}
u_h(\bx)
=
\begin{cases}
\displaystyle
\sum_{k=1}^{m_1}\hat u_k^{(1)}\phi_k^{(1)}(\bx),
& \bx\in\Omega_1,\\[1ex]
\displaystyle
\sum_{k=1}^{m_2}\hat u_k^{(2)}\phi_k^{(2)}(\bx),
& \bx\in\Omega_2.
\end{cases}
\label{eq:pg-two-net-ansatz}
\end{equation}
Before the interface equations are imposed, $u_h$ should be regarded as a broken trial function.

\subsection{Broken Petrov--Galerkin Discretization}
\label{subsec:pg-discretization}

For a sufficiently smooth exact solution, integration by parts gives the weak formulation
\[
a(u,v)=l(v),
\qquad
\forall v\in V,
\]
where, for test functions satisfying $v|_{\Gamma_D}=0$,
\begin{align}
a(u,v)
&=
\int_{\Omega}\beta\nabla u\cdot\nabla v\,\mathrm d\bx
+
\int_{\Omega}\gamma uv\,\mathrm d\bx,
\label{eq:pg-bilinear-form}\\
l(v)
&=
\int_{\Omega}fv\,\mathrm d\bx
+
\int_{\Gamma_N}g_Nv\,\mathrm ds.
\label{eq:pg-linear-form}
\end{align}

Since the numerical trial function \eqref{eq:pg-two-net-ansatz} is constructed piecewise, it is not globally conforming before the interface equations are imposed. We therefore assemble the PDE part by using the broken bilinear form
\[
a_h(u_h,v_h)
=
\sum_{\alpha=1}^{2}
\left[
\int_{\Omega_\alpha}
\beta\nabla u_h^{(\alpha)}\cdot\nabla v_h\,\mathrm d\bx
+
\int_{\Omega_\alpha}
\gamma u_h^{(\alpha)}v_h\,\mathrm d\bx
\right].
\]
For a smooth exact solution satisfying continuity of the solution and the normal flux across $\Gamma_I$, this broken formulation is consistent with the original weak formulation. For the numerical solution, these transmission conditions are imposed separately through the interface equations in Section~\ref{subsec:pg-boundary-interface}. Thus the PDE equations and the interface equations should be viewed as two coupled components of one least-squares system.

The Petrov--Galerkin equations for the PDE part are
\[
a_h(u_h,v_j)=l(v_j),
\qquad
j=1,\ldots,M.
\]
Substituting \eqref{eq:pg-two-net-ansatz} gives
\begin{equation}
\begin{aligned}
\sum_{k=1}^{m_1}\hat u_k^{(1)}
\left[
\int_{\Omega_1}\beta\nabla\phi_k^{(1)}\cdot\nabla v_j\,\mathrm d\bx
+
\int_{\Omega_1}\gamma\phi_k^{(1)}v_j\,\mathrm d\bx
\right]
+
\sum_{k=1}^{m_2}\hat u_k^{(2)}
\left[
\int_{\Omega_2}\beta\nabla\phi_k^{(2)}\cdot\nabla v_j\,\mathrm d\bx
+
\int_{\Omega_2}\gamma\phi_k^{(2)}v_j\,\mathrm d\bx
\right]
=
l(v_j).
\end{aligned}
\label{a1a2}
\end{equation}
Thus the Petrov--Galerkin equations can be written as
\begin{equation}
\boldsymbol A\boldsymbol U=\boldsymbol L,
\qquad
\boldsymbol A=(\boldsymbol A^{(1)},\boldsymbol A^{(2)}),
\qquad
\boldsymbol U^\top=((\boldsymbol U^{(1)})^\top,(\boldsymbol U^{(2)})^\top),
\label{au3}
\end{equation}
where
\[
\boldsymbol A_{jk}^{(\alpha)}=a_h(\phi_k^{(\alpha)},v_j),
\qquad
\boldsymbol L_j=l(v_j),
\qquad
\boldsymbol U^{(\alpha)}
=
(\hat u_1^{(\alpha)},\ldots,\hat u_{m_\alpha}^{(\alpha)})^\top,
\qquad
\alpha=1,2.
\]

\subsection{Boundary and Interface Conditions}
\label{subsec:pg-boundary-interface}

The Dirichlet boundary condition is enforced at boundary sampling points
$\{\by_j\}_{j=1}^{N_b}\subset\Gamma_D$. If $\by_j\in\Gamma_D\cap\overline{\Omega_\alpha}$, we impose
\[
\sum_{k=1}^{m_\alpha}
\hat u_k^{(\alpha)}\phi_k^{(\alpha)}(\by_j)
=
g_D(\by_j).
\]
Collecting these equations gives
\begin{equation}
\boldsymbol B\boldsymbol U=\boldsymbol F.
\label{bd3}
\end{equation}

Next, we impose interface coupling between the near-field and far-field subnetworks. Let
$\{\bz_j\}_{j=1}^{N_c}\subset\Gamma_I$ be interface sampling points. We enforce
\[
u_h^{(1)}(\bz_j)=u_h^{(2)}(\bz_j),
\qquad
\beta_1(\bz_j)\nabla u_h^{(1)}(\bz_j)\cdot\bn
=
\beta_2(\bz_j)\nabla u_h^{(2)}(\bz_j)\cdot\bn,
\qquad
j=1,\ldots,N_c,
\]
where \(\bn\) is a fixed unit normal vector on \(\Gamma_I\), taken to point from \(\Omega_1\) to \(\Omega_2\). Equivalently,
\begin{equation}
\begin{cases}
\displaystyle
\sum_{k=1}^{m_1}\hat u_k^{(1)}\phi_k^{(1)}(\bz_j)
-
\sum_{k=1}^{m_2}\hat u_k^{(2)}\phi_k^{(2)}(\bz_j)
=0,\\[2ex]
\displaystyle
\sum_{k=1}^{m_1}\hat u_k^{(1)}
\beta_1(\bz_j)\nabla\phi_k^{(1)}(\bz_j)\cdot\bn
-
\sum_{k=1}^{m_2}\hat u_k^{(2)}
\beta_2(\bz_j)\nabla\phi_k^{(2)}(\bz_j)\cdot\bn
=0.
\end{cases}
\end{equation}
The corresponding interface matrix equation is
\begin{equation}
\boldsymbol C\boldsymbol U=\boldsymbol Z,
\qquad
\boldsymbol C\in\mathbb{R}^{2N_c\times(m_1+m_2)},
\qquad
\boldsymbol Z=\mathbf 0.
\label{nbd3}
\end{equation}

Combining \eqref{au3}, \eqref{bd3}, and \eqref{nbd3}, we obtain the final least-squares system
\begin{equation}
\begin{bmatrix}
\boldsymbol A\\
\xi\boldsymbol B\\
\eta\boldsymbol C
\end{bmatrix}
\boldsymbol U
=
\begin{bmatrix}
\boldsymbol L\\
\xi\boldsymbol F\\
\eta\boldsymbol Z
\end{bmatrix},
\label{ab3}
\end{equation}
where $\xi$ and $\eta$ are penalty parameters for the boundary and interface equations, respectively. Solving \eqref{ab3} by least--squares yields the output-layer coefficients of the two subnetworks.

\begin{remark}[Difference from artificial boundary conditions]
The interface equations in \eqref{nbd3} should not be interpreted as conventional artificial boundary conditions used to close a truncated problem. In classical truncation methods, the exterior domain is removed and its effect must be approximated by an absorbing, transparent, or asymptotic boundary condition imposed on the artificial boundary. In the present method, the exterior region is still represented by a separate randomized neural subnetwork. The interface conditions only couple the near-field and far-field trial spaces. Thus, the role of the interface is to connect two representations, rather than to replace the exterior problem by a prescribed boundary operator.
\end{remark}

\section{Domain-Decomposed RaNN Collocation Method for Fully\\ Unbounded Time-Dependent Problems}
\label{sec:whole}

In this section, we develop a domain-decomposed RaNN collocation method (DD-RaNN-CM) for fully unbounded time-dependent problems. The method is particularly useful when the domain has more than one far-field direction, as in whole-space problems. We present the formulation using a one-dimensional time-dependent Schr\"odinger equation as a representative example:
\begin{equation}
\begin{cases}
i u_t+u_{xx}=f,
& x\in\Omega=\mathbb R,\quad t\in[0,T],\\
u(x,0)=g(x),
& x\in\mathbb R,\\
u(x,t)\rightarrow0,
& |x|\rightarrow\infty.
\end{cases}
\label{eq:Whole}
\end{equation}

\subsection{Fully Unbounded Spatio-Temporal Decomposition}
\label{subsec:whole-decomposition}

For fully unbounded problems, a single randomized neural network with one parameter scale may have difficulty resolving both the localized core of the solution and the two far-field tails. We therefore adopt a three-network decomposition. Let
\[
x_{\mathrm{cut}}^{(1)}<x_{\mathrm{cut}}^{(2)}
\]
be two interface locations. The whole line is decomposed into
\[
\Omega_1=(-\infty,x_{\mathrm{cut}}^{(1)}),
\qquad
\Omega_2=[x_{\mathrm{cut}}^{(1)},x_{\mathrm{cut}}^{(2)}],
\qquad
\Omega_3=(x_{\mathrm{cut}}^{(2)},+\infty).
\]
The left and right subnetworks represent the far-field tails, while the middle subnetwork represents the near-field region.

The spatio-temporal numerical solution is written as
\begin{equation}
u_h(x,t)=
\begin{cases}
\displaystyle
\sum_{k=1}^{m_1}
\hat u_k^{(1)}\phi_k^{(1)}(x,t),
& x\in(-\infty,x_{\mathrm{cut}}^{(1)}),\quad t\in[0,T],\\[2ex]
\displaystyle
\sum_{k=1}^{m_2}
\hat u_k^{(2)}\phi_k^{(2)}(x,t),
& x\in[x_{\mathrm{cut}}^{(1)},x_{\mathrm{cut}}^{(2)}],\quad t\in[0,T],\\[2ex]
\displaystyle
\sum_{k=1}^{m_3}
\hat u_k^{(3)}\phi_k^{(3)}(x,t),
& x\in(x_{\mathrm{cut}}^{(2)},+\infty),\quad t\in[0,T].
\end{cases}
\label{solu:Whole}
\end{equation}
Here $\{\phi_k^{(\alpha)}\}_{k=1}^{m_\alpha}$ are randomized spatio-temporal features generated by the $\alpha$-th subnetwork.

\subsection{Collocation Discretization}
\label{subsec:whole-collocation}

For each subregion, let
\[
\{(x_j^{(\alpha)},t_j^{(\alpha)})\}_{j=1}^{N_\alpha},
\qquad
\alpha=1,2,3,
\]
be the collocation points. In practice, the spatial coordinates are sampled from distributions adapted to the corresponding subregion, while the temporal coordinates are sampled in $[0,T]$.

At the collocation points, the Schr\"odinger equation \eqref{eq:Whole} gives
\begin{equation}
i\partial_t u_h(x_j^{(\alpha)},t_j^{(\alpha)})
+
\partial_{xx}u_h(x_j^{(\alpha)},t_j^{(\alpha)})
=
f(x_j^{(\alpha)},t_j^{(\alpha)}),
\qquad
j=1,\ldots,N_\alpha,
\quad
\alpha=1,2,3.
\label{strong:whole}
\end{equation}
Substituting \eqref{solu:Whole} yields
\begin{equation}
\sum_{k=1}^{m_\alpha}
\hat u_k^{(\alpha)}
\left[
i\partial_t\phi_k^{(\alpha)}(x_j^{(\alpha)},t_j^{(\alpha)})
+
\partial_{xx}\phi_k^{(\alpha)}(x_j^{(\alpha)},t_j^{(\alpha)})
\right]
=
f(x_j^{(\alpha)},t_j^{(\alpha)}),
\end{equation}
for $j=1,\ldots,N_\alpha$ and $\alpha=1,2,3$.

Thus each subnetwork contributes a linear block
\[
\boldsymbol A^{(\alpha)}\boldsymbol U^{(\alpha)}
=
\boldsymbol L^{(\alpha)},
\qquad
\boldsymbol U^{(\alpha)}
=
(\hat u_1^{(\alpha)},\ldots,\hat u_{m_\alpha}^{(\alpha)})^\top,
\]
where
\[
\boldsymbol A_{jk}^{(\alpha)}
=
i\partial_t\phi_k^{(\alpha)}(x_j^{(\alpha)},t_j^{(\alpha)})
+
\partial_{xx}\phi_k^{(\alpha)}(x_j^{(\alpha)},t_j^{(\alpha)}),
\qquad
\boldsymbol L_j^{(\alpha)}
=
f(x_j^{(\alpha)},t_j^{(\alpha)}).
\]
Stacking the three blocks gives
\begin{equation}
\boldsymbol A\boldsymbol U=\boldsymbol L,
\qquad
\boldsymbol A=
\mathrm{diag}(\boldsymbol A^{(1)},\boldsymbol A^{(2)},\boldsymbol A^{(3)}),
\qquad
\boldsymbol U^\top=((\boldsymbol U^{(1)})^\top,(\boldsymbol U^{(2)})^\top,(\boldsymbol U^{(3)})^\top),
\label{AU:whole}
\end{equation}
with
\[
\boldsymbol L^\top
=
((\boldsymbol L^{(1)})^\top,(\boldsymbol L^{(2)})^\top,(\boldsymbol L^{(3)})^\top).
\]

Since the Schr\"odinger equation is complex-valued, the matrices and vectors in \eqref{AU:whole} are generally complex-valued. One may either solve the resulting complex least-squares problem directly or rewrite it as an equivalent real least-squares problem by separating the real and imaginary parts. The numerical experiments below use the latter implementation.

\subsection{Initial and Interface Conditions}
\label{subsec:whole-conditions}

The initial condition $u(x,0)=g(x)$ is enforced at sampled points
\[
\{(y_j,0)\}_{j=1}^{N_b}.
\]
If $y_j\in\overline{\Omega_\alpha}$, we impose
\[
\sum_{k=1}^{m_\alpha}
\hat u_k^{(\alpha)}\phi_k^{(\alpha)}(y_j,0)
=
g(y_j).
\]
Collecting all initial-condition equations gives
\begin{equation}
\boldsymbol B\boldsymbol U=\boldsymbol F.
\label{bd3:sho}
\end{equation}

Next, the subnetworks are coupled at the two near--far interfaces
\[
x=x_{\mathrm{cut}}^{(1)},
\qquad
x=x_{\mathrm{cut}}^{(2)}.
\]
For the second-order equation \eqref{eq:Whole}, we impose continuity of both the solution and its first spatial derivative. At sampled temporal points
\[
\{t_j^{(1)}\}_{j=1}^{N_c^{(1)}}
\quad\text{on }x=x_{\mathrm{cut}}^{(1)},
\qquad
\{t_j^{(2)}\}_{j=1}^{N_c^{(2)}}
\quad\text{on }x=x_{\mathrm{cut}}^{(2)},
\]
we impose
\begin{align}
&\sum_{k=1}^{m_1}
\hat u_k^{(1)}
\phi_k^{(1)}(x_{\mathrm{cut}}^{(1)},t_j^{(1)})
-
\sum_{k=1}^{m_2}
\hat u_k^{(2)}
\phi_k^{(2)}(x_{\mathrm{cut}}^{(1)},t_j^{(1)})
=0,
\nonumber\\
&\sum_{k=1}^{m_1}
\hat u_k^{(1)}
\partial_x\phi_k^{(1)}(x_{\mathrm{cut}}^{(1)},t_j^{(1)})
-
\sum_{k=1}^{m_2}
\hat u_k^{(2)}
\partial_x\phi_k^{(2)}(x_{\mathrm{cut}}^{(1)},t_j^{(1)})
=0,
\qquad j=1,\ldots,N_c^{(1)},
\nonumber\\
&\sum_{k=1}^{m_2}
\hat u_k^{(2)}
\phi_k^{(2)}(x_{\mathrm{cut}}^{(2)},t_j^{(2)})
-
\sum_{k=1}^{m_3}
\hat u_k^{(3)}
\phi_k^{(3)}(x_{\mathrm{cut}}^{(2)},t_j^{(2)})
=0,
\nonumber\\
&\sum_{k=1}^{m_2}
\hat u_k^{(2)}
\partial_x\phi_k^{(2)}(x_{\mathrm{cut}}^{(2)},t_j^{(2)})
-
\sum_{k=1}^{m_3}
\hat u_k^{(3)}
\partial_x\phi_k^{(3)}(x_{\mathrm{cut}}^{(2)},t_j^{(2)})
=0,
\qquad j=1,\ldots,N_c^{(2)}.
\end{align}
Stacking all interface equations gives
\begin{equation}
\boldsymbol C\boldsymbol U=\boldsymbol Z,
\qquad
\boldsymbol C\in
\mathbb R^{(2N_c^{(1)}+2N_c^{(2)})\times(m_1+m_2+m_3)},
\qquad
\boldsymbol Z=\mathbf 0.
\label{nbd4}
\end{equation}

Combining the PDE collocation equations, the initial condition, and the interface conditions, we obtain
\begin{equation}
\begin{bmatrix}
\boldsymbol A\\
\xi\boldsymbol B\\
\eta\boldsymbol C
\end{bmatrix}
\boldsymbol U
=
\begin{bmatrix}
\boldsymbol L\\
\xi\boldsymbol F\\
\eta\boldsymbol Z
\end{bmatrix},
\label{ab4}
\end{equation}
where $\xi$ and $\eta$ are penalty parameters for the initial and interface equations, respectively. The least-squares solution of \eqref{ab4} gives all output-layer coefficients of the three subnetworks.

\begin{remark}
The three-network construction is not restricted to the linear Schr\"odinger equation. It can also be used for other fully unbounded problems with different left and right far-field behavior. For more complicated solution profiles, the number of subnetworks can be increased. One may also replace the sharp domain partition by smooth window functions, although this leads to a different coupling formulation.
\end{remark}

%%%%%%%%%%%%%%%%%%%%%%%%%%%%%%%%

	\section{Numerical Analysis}\label{sec:analysis}

In this section, the theoretical analysis is formulated in a broken Sobolev framework. This is necessary because the domain-decomposed ansatz uses characteristic functions. A piecewise function of the form
\[
	u_N=\chi_1u_{N_1}+\chi_2u_{N_2},\quad \chi_i=\mathbf{1}_{\Omega_i},\,i=1,2,  
\]
need not belong to the global space $H^s(\Real^d)$ for $s\ge 1$ unless exact trace matching conditions are imposed on the interface. Therefore, the approximation theorem below is stated in a broken Sobolev norm. Interface compatibility is treated separately through a broken graph norm and an interface jump functional.

Let
\[
	\Omega_1=\{\bx\in\Real^d:\|\bx\|_2<M\},\qquad
	\Omega_2=\{\bx\in\Real^d:\|\bx\|_2>M\},\qquad
	\Gamma=\{\bx\in\Real^d:\|\bx\|_2=M\}.
\]
Then
\[
	\Real^d=\Omega_1\cup\Gamma\cup\Omega_2.
\]
Here $\Omega_1$ denotes the near-field region and $\Omega_2$ denotes the far-field region. The same analysis applies to more general decompositions with a sufficiently regular interface.

\begin{definition}[Broken Sobolev space]
For $s\ge0$, define
\[
	H^s_{\rm br}(\Omega_1\cup\Omega_2)
	:=\{v=(v_1,v_2): v_i\in H^s(\Omega_i),\ i=1,2\},
\]
with norm
\[
	\|v\|_{H^s_{\rm br}}^2
	:=\|v_1\|_{H^s(\Omega_1)}^2+\|v_2\|_{H^s(\Omega_2)}^2.
\]
When a piecewise function is written as $v=\chi_1v_1+\chi_2v_2$, it is identified with the pair $(v_1,v_2)$ in $H^s_{\rm br}(\Omega_1\cup\Omega_2)$. No trace matching across $\Gamma$ is imposed in the definition of the broken space.
\end{definition}

\begin{remark}
If $v=\chi_1v_1+\chi_2v_2$ and $[v]:=v_1|_\Gamma-v_2|_\Gamma\ne0$, then the distributional derivative contains an interface measure of the form $[v]\bn\delta_\Gamma$. Hence $v$ generally fails to belong to $H^1(\Real^d)$, and consequently cannot be used in a global $H^s(\Real^d)$ approximation theorem for $s\ge1$. The broken norm avoids this difficulty by measuring the Sobolev regularity separately on each subdomain. Interface compatibility is then measured by a separate jump functional.
\end{remark}

We use the following broken randomized neural trial space:
\[
\begin{aligned}
\cN_{\rho,\mathbf N}^{\rm br}
:=\Bigl\{&u=\chi_1u_1+\chi_2u_2:\ 
 u_l(\bx)=\sum_{k=1}^{N_l}c_k^{(l)}\prod_{j=1}^d\rho(w_{j,k}^{(l)}x_j+b_{j,k}^{(l)}),\ l=1,2,\\
&\sum_{k=1}^{N_l}|c_k^{(l)}|\le C_{N_l},\quad
R_{\min}\le |w_{j,k}^{(l)}|\le R_{\max},\quad |b_{j,k}^{(l)}|\le B_l\Bigr\}.
\end{aligned}
\]
Here $\mathbf N=(N_1,N_2)$ denotes the subnetwork sizes together with the chosen coefficient budgets $C_{N_1}$ and $C_{N_2}$. The coefficient budgets are part of the trial-space specification. In approximation statements below, they are chosen large enough to contain the finite-sum approximants obtained from the representing measures. Thus the approximation theorem is a bounded-coefficient result on a sufficiently large coefficient ball, not a claim for an arbitrarily prescribed small coefficient budget. This notation is separated from the decomposition radius $M$ in the definition of $\Omega_1$, $\Omega_2$, and $\Gamma$.
The functions in $\cN_{\rho,\mathbf N}^{\rm br}$ are not assumed to belong to global $H^s(\Real^d)$.

The coefficient bound in the definition of $\cN_{\rho,\mathbf N}^{\rm br}$ is part of the theoretical stability framework. Consequently, the least-squares error estimate below applies to a constrained, regularized, or a posteriori bounded least-squares solution whose output-layer coefficients remain in this coefficient ball. If an unconstrained least-squares system is solved in practice, the estimate should be interpreted after verifying such a coefficient bound a posteriori, or after enlarging the coefficient ball so that it contains the computed solution.

Let $V$ be a broken Hilbert space, and let
\[
	\cG_{\rm br}v=(\cG_1v_1,\cG_2v_2)
\]
be the piecewise differential operator. We define the population broken loss by
\begin{equation}\label{eq:pop-loss-br}
	\cL_{\rm br}(v)
	:=\|\cG_{\rm br}v-f\|_Y^2+\mathcal J_\Gamma(v),
\end{equation}
where $\mathcal J_\Gamma(v)$ is an interface jump functional compatible with the chosen broken space. Its empirical counterpart is denoted by $\widetilde{\cL}_{\rm br}(v)$.

\begin{assumption}[Broken graph norm equivalence]\label{assmp:graph norm}
Assume that the boundary, decay, normalization, or transmission conditions needed to remove the kernel of the operator have been imposed. More precisely, the estimate below is required on the admissible error space associated with the PDE. Assume that there exist constants $C_L,C_U>0$ such that, for all admissible error functions $v$ in the chosen broken graph space $V$,
\begin{equation}\label{eq:broken-graph-norm}
	C_L\|v\|_V^2
	\le
	\|\cG_{\rm br}v\|_Y^2+\mathcal J_\Gamma(v)
	\le
	C_U\|v\|_V^2.
\end{equation}
The space $V$, the residual space $Y$, and the interface functional $\mathcal J_\Gamma$ must be chosen consistently. This assumption is a stability statement for the error equation; it is not meant to assert coercivity on a larger space containing nontrivial kernels.
\end{assumption}

\begin{remark}[Examples of broken graph norms]
For the elliptic operator
\[
	\cG u=-\nabla\cdot(\beta\nabla u)+\gamma u,
\]
assume that $\beta$ is uniformly elliptic and sufficiently regular, and that $\gamma\ge\gamma_0>0$. We fix a unit normal vector $\bn$ on $\Gamma$, pointing from $\Omega_1$ to $\Omega_2$, and use the scalar jump convention
\[
[v]=v_1|_\Gamma-v_2|_\Gamma,
\qquad
[\partial_n v]=\nabla v_1\cdot\bn-\nabla v_2\cdot\bn.
\]
For conormal fluxes, if $\bn_i$ denotes the outward normal of $\Omega_i$, then the transmission residual is
\[
\mathcal F_\Gamma(v)
:=
[(\beta\nabla v)\cdot n]_{\rm flux}
:=
(\beta_1\nabla v_1)\cdot\bn_1
+
(\beta_2\nabla v_2)\cdot\bn_2 .
\]
Equivalently, with the fixed normal $\bn$ from $\Omega_1$ to $\Omega_2$, this residual may be written as
\[
\mathcal F_\Gamma(v)
=
(\beta_1\nabla v_1)\cdot\bn
-
(\beta_2\nabla v_2)\cdot\bn .
\]
The notation \(\mathcal F_\Gamma(v)\) is used below for the conormal-flux transmission residual, independently of which equivalent normal convention is adopted. The same convention is used in the interface functional $\mathcal J_\Gamma$. If $\Gamma$ is an artificial interface inside a medium with globally smooth coefficients, a strong broken $H^2$ estimate may be formulated with
\[
	V=H^2_{\rm br}(\Omega_1\cup\Omega_2),\qquad
	Y=L^2(\Omega_1)\times L^2(\Omega_2),
\]
and
\[
	\mathcal J_\Gamma(v)
	=\|[v]\|_{H^{3/2}(\Gamma)}^2
	+\|[\partial_n v]\|_{H^{1/2}(\Gamma)}^2.
\]
Under piecewise $H^2$ elliptic regularity and a compatible transmission stability estimate, Assumption~\ref{assmp:graph norm} gives a broken $H^2$ error estimate. For a genuine transmission problem with discontinuous coefficients, the normal derivative jump should be replaced by the conormal-flux residual $\mathcal F_\Gamma(v)$.

A weaker energy framework should be understood in a graph space rather than in the whole space $H^1_{\rm br}$. For instance, one may use a space of the form
\[
V_{1,{\rm br}}
=
\{v=(v_1,v_2): v_i\in H^1(\Omega_i),\ \cG_i v_i\in H^{-1}(\Omega_i),\
(\beta\nabla v_i)\cdot \bn_i\in H^{-1/2}(\Gamma)\},
\]
where the conormal trace is understood in the weak sense induced by Green's identity. More precisely, for a trace datum $\psi\in H^{1/2}(\Gamma)$ and any lifting $\Psi\in H^1(\Omega_i)$ with $\Psi|_\Gamma=\psi$, one may define the weak conormal trace by
\[
\langle (\beta\nabla v_i)\cdot\bn_i,\psi\rangle_\Gamma
=
\int_{\Omega_i}(\beta\nabla v_i)\cdot\nabla\Psi\,\mathrm dx
-
\langle \cG_i v_i-\gamma v_i,\Psi\rangle_{\Omega_i},
\]
where \(\bn_i\) is the outward normal of \(\Omega_i\). If the opposite normal convention is used, the sign of the conormal trace must be changed accordingly. 
This definition is independent of the chosen lifting provided
\[
\cG_i v_i-\gamma v_i=-\nabla\cdot(\beta\nabla v_i)
\quad\hbox{in }H^{-1}(\Omega_i):=(H^{1}(\Omega_i))'.
\]
Indeed, if two liftings have the same trace on \(\Gamma\), then their difference
\(\Phi\) belongs to \(H_0^1(\Omega_i)\), and
\[
\int_{\Omega_i}(\beta\nabla v_i)\cdot\nabla\Phi\,\mathrm dx
-
\langle \cG_i v_i-\gamma v_i,\Phi\rangle_{\Omega_i}
=
\int_{\Omega_i}(\beta\nabla v_i)\cdot\nabla\Phi\,\mathrm dx
-
\langle -\nabla\cdot(\beta\nabla v_i),\Phi\rangle_{\Omega_i}
=0 .
\]
 In this case one may take
\[
	Y=H^{-1}(\Omega_1)\times H^{-1}(\Omega_2),
\]
and
\[
	\mathcal J_\Gamma(v)
	=\|[v]\|_{H^{1/2}(\Gamma)}^2
	+\|\mathcal F_\Gamma(v)\|_{H^{-1/2}(\Gamma)}^2.
\]
The conormal-flux jump is needed for a genuine broken energy estimate. The pointwise or $L^2(\Gamma)$ interface penalties used in the numerical implementation should be regarded as computable surrogates of these trace norms, rather than as uniformly equivalent quantities, unless additional finite-dimensional sampling or inverse estimates are imposed.
\end{remark}

We next record the precise conditional error decomposition used in the sequel. Assume that $\cG_{\rm br}$ is linear and that the exact solution $u^*$ satisfies
\[
	\cG_{\rm br}u^*=f,
\]
together with the homogeneous transmission conditions appearing in $\mathcal J_\Gamma$. Thus, for every trial function $u$,
\[
	\cG_{\rm br}(u-u^*)=\cG_{\rm br}u-f,
	\qquad
	\mathcal J_\Gamma(u-u^*)=\mathcal J_\Gamma(u).
\]
We also assume that the errors \(u-u^*\), for
\(u\in\cN_{\rho,\mathbf N}^{\rm br}\), belong to the admissible error space
in Assumption~\ref{assmp:graph norm}.

Let \(u_\rho\in\cN_{\rho,\mathbf N}^{\rm br}\) be a computed empirical
least-squares solution satisfying
\[
\widetilde\cL_{\rm br}(u_\rho)
\le
\inf_{u\in\cN_{\rho,\mathbf N}^{\rm br}}
\widetilde\cL_{\rm br}(u)+\eta_{\rm opt},
\]
where \(\eta_{\rm opt}\ge0\) denotes the optimization suboptimality. Define the empirical-consistency gap on the same coefficient-bounded trial space by
\[
S_{\rm br}:=
\sup_{u\in\cN_{\rho,\mathbf N}^{\rm br}}
|\cL_{\rm br}(u)-\widetilde\cL_{\rm br}(u)|.
\]
The coefficient-bounded trial space is fixed throughout this estimate. The comparison functions, the computed least-squares solution, and the uniform consistency quantity \(S_{\rm br}\) are all taken over this same coefficient ball. If the coefficient budgets are enlarged to improve approximation accuracy, then the constants in the empirical-consistency estimates may also change.

For any \(u_a\in\cN_{\rho,\mathbf N}^{\rm br}\), set
\(e_\rho=u_\rho-u^*\). By Assumption~\ref{assmp:graph norm},
\[
\begin{aligned}
	C_L\|u_\rho-u^*\|_V^2
	&=C_L\|e_\rho\|_V^2 \le \|\cG_{\rm br}e_\rho\|_Y^2+\mathcal J_\Gamma(e_\rho)=\cL_{\rm br}(u_\rho) \\
	&\le \widetilde\cL_{\rm br}(u_\rho)+S_{\rm br} \le \widetilde\cL_{\rm br}(u_a)+\eta_{\rm opt}+S_{\rm br} \\
	&\le \cL_{\rm br}(u_a)+\eta_{\rm opt}+2S_{\rm br} \\
	&\le C_U\|u_a-u^*\|_V^2+\eta_{\rm opt}+2S_{\rm br}.
\end{aligned}
\]
Taking the infimum over \(u_a\in\cN_{\rho,\mathbf N}^{\rm br}\) yields
\begin{equation}\label{eq:broken-error-decomposition}
	\|u_\rho-u^*\|_V^2
	\le
	\frac{C_U}{C_L}
	\inf_{u\in\cN_{\rho,\mathbf N}^{\rm br}}\|u-u^*\|_V^2
	+\frac{2}{C_L}S_{\rm br}
	+\frac{1}{C_L}\eta_{\rm opt}.
\end{equation}

It is important to distinguish the stability norm $V$ from the approximation norm used later. The broken $H^s$ approximation theorem below directly controls the approximation term in \eqref{eq:broken-error-decomposition} when $V=H^s_{\rm br}$, for instance in the broken $H^2$ framework with $s=2$. If $V$ is a stronger graph space containing additional interface traces or fluxes, then approximation in that stronger $V$-norm must be assumed separately.

\subsection{Approximation Error}

We now prove the approximation result in the broken Sobolev norm. The conclusion is weakened from a global $H^s(\Real^d)$ estimate to a broken $H^s$ estimate, which is the natural statement for characteristic-function-based domain decomposition.

\begin{assumption}\label{As:decay}
The target function $u^*\in H^{s+\varepsilon_C}(\Real^d)\cap L^1(\Real^d)$ satisfies
\[
	|u^*(\bx)|\le \frac{C_f}{(1+\|\bx\|_2)^{d+\epsilon_f}},
	\qquad \bx\in\Real^d,
\]
where $C_f>0$, $\epsilon_f>0$, and $\varepsilon_C>0$.
\end{assumption}

\begin{assumption}[Activation admissibility]\label{As:act}
Throughout the approximation analysis, $s\in\mathbb N_0$ is fixed. The activation function satisfies:
\begin{itemize}
\item[(A1)] $\rho\in C^{s+1}(\Real)$.
\item[(A2)] $\rho^{(j)}\in L^1(\Real)$ for $j=0,\ldots,s+1$ and $\widehat{\rho'}(1)\ne0$.
\item[(A3)] There exist $k_\rho>1/2$ and $C_\rho>0$ such that
\[
	|\rho^{(j)}(x)|\le \frac{C_\rho}{(1+|x|)^{k_\rho}},
	\qquad j=0,\ldots,s+1.
\]
\end{itemize}
\end{assumption}
The decay condition is chosen sufficiently strong to ensure that, for bounded parameter sets with $|w_j|\ge R_{\min}>0$, the tensor-product feature map
\[
\theta\mapsto \phi_\theta,
\qquad
\phi_\theta(\bx)=\prod_{j=1}^d\rho(w_jx_j+b_j),
\]
is continuous and uniformly bounded as a map from the parameter set into $H^s(\Omega_l)$, $l=1,2$. The lower bound $|w_j|\ge R_{\min}>0$ is essential for the far-field subdomain; without it, the decay of the rescaled features may degenerate and uniform $H^s(\Omega_2)$ bounds may fail. This property is used below in the Bochner quadrature and randomized-parameter arguments.

In numerical implementations, one may also use tensor-product features of the form
\[
\phi_\theta(\bx)
=
\prod_{j=1}^d \rho_j(w_jx_j+b_j),
\]
where the one-dimensional activation functions \(\rho_j\) are allowed to be
different. In the experiments, we sometimes use non-decaying activations such as \(\tanh\). These choices are empirical variants and are not directly covered by the decay-based approximation estimates above. The theoretical estimates below apply to decay-admissible activations, or to trial spaces for which the far-field tail-control assumptions can be verified.

We use the Fourier convention
\[
	\widehat g(\bxi)=\int_{\Real^d}g(\bx)e^{-i\bxi^\top\bx}\dbx,
	\qquad
	g(\bx)=(2\pi)^{-d}\int_{\Real^d}\widehat g(\bxi)e^{i\bxi^\top\bx}\,\mathrm d\bxi.
\]
For the fixed integer $s\in\mathbb N_0$ used in the approximation analysis, set
\[
	\|g\|_{s,\Real^d}^2
	:=\int_{\Real^d}(1+\|\bxi\|_2^2)^s|\widehat g(\bxi)|^2\,\mathrm d\bxi.
\]
This norm is equivalent to the standard Sobolev norm. Constants depending on the Fourier normalization are absorbed into generic constants.

\begin{lemma}[Band-limited approximation]\label{lem:band}
Let $v\in H^{s+\varepsilon_C}(\Real^d)\cap L^1(\Real^d)$ with $\varepsilon_C>0$. Let $m_{R,a}\in C_c^\infty(\Real^d)$ satisfy $0\le m_{R,a}\le1$,
\[
	m_{R,a}(\bxi)=1
	\quad\hbox{if}\quad
	R_{\min}+a\le |\xi_j|\le R_{\max}-a,
	\quad j=1,\ldots,d,
\]
and
\[
	m_{R,a}(\bxi)=0
	\quad\hbox{if}\quad
	|\xi_j|\le R_{\min}\ \hbox{for some }j
	\quad\hbox{or}\quad
	|\xi_j|\ge R_{\max}\ \hbox{for some }j.
\]
Define $v_R=P_{R,a}v$ by
\[
	\widehat{v_R}(\bxi)=m_{R,a}(\bxi)\widehat v(\bxi).
\]
Assume $R_{\max}\ge1$, $0<a\le R_{\max}/2$, $R_{\min}+a<R_{\max}-a$, and
\[
	R_{\min}+a\le  R_{\max}^{-(2s+d-1+2\varepsilon_R)}.
\]
The lower cutoff $R_{\min}$ is therefore not fixed independently of $R_{\max}$; it must tend to zero as $R_{\max}\to\infty$. Otherwise, neighborhoods of the coordinate hyperplanes in frequency space would be permanently removed and a general function in $H^s(\Real^d)\cap L^1(\Real^d)$ could not be approximated.
Then
\begin{equation}\label{eq:H1-band-error}
	\|v-v_R\|_{s,\Real^d}
	\le
	C R_{\max}^{-\varepsilon_C}\|v\|_{s+\varepsilon_C,\Real^d}
	+C R_{\max}^{-\varepsilon_R}\|v\|_{L^1(\Real^d)}.
\end{equation}
Consequently, by restriction continuity,
\[
	\|v-v_R\|_{H^s_{\rm br}}
	\le C_{\rm br}\|v-v_R\|_{s,\Real^d}.
\]
\end{lemma}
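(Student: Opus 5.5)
By Plancherel, the error is controlled entirely by the Fourier multiplier:
\[
\|v-v_R\|_{s,\Real^d}^2=\int_{\Real^d}(1+\|\bxi\|_2^2)^s\,|1-m_{R,a}(\bxi)|^2|\widehat v(\bxi)|^2\,\mathrm d\bxi .
\]
Since $0\le 1-m_{R,a}\le 1$ and $1-m_{R,a}$ vanishes on the set $\{R_{\min}+a\le|\xi_j|\le R_{\max}-a,\ j=1,\dots,d\}$, I split the complement of that set into two regions. The \emph{high-frequency region} is $E_{\rm hi}=\{\bxi:\ |\xi_j|>R_{\max}-a\text{ for some }j\}$. The \emph{slab region} is $E_{\rm lo}=\{\bxi:\ |\xi_j|<R_{\min}+a\text{ for some }j\}\cap\{\|\bxi\|_\infty\le R_{\max}-a\}$. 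The integral then splits as the sum over these two regions, and I take square roots at the end.

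For $E_{\rm hi}$ I use $a\le R_{\max}/2$. This gives $\|\bxi\|_2\ge\|\bxi\|_\infty\ge R_{\max}/2$ on $E_{\rm hi}$, hence $(1+\|\bxi\|_2^2)^s\le C R_{\max}^{-2\varepsilon_C}(1+\|\bxi\|_2^2)^{s+\varepsilon_C}$ there. This yields the bound $C R_{\max}^{-2\varepsilon_C}\|v\|_{s+\varepsilon_C,\Real^d}^2$, which is the first term of \eqref{eq:H1-band-error}.

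For $E_{\rm lo}$ I use $v\in L^1$, which gives $\|\widehat v\|_{L^\infty}\le\|v\|_{L^1(\Real^d)}$. I bound the integral by $\|v\|_{L^1}^2$ times the weighted measure of $E_{\rm lo}$. Taking a union bound over the index $j$, each piece is a slab $\{|\xi_j|<R_{\min}+a\}$ intersected with the cube of side $2R_{\max}$. On that piece the weight is at most $(1+dR_{\max}^2)^s\le CR_{\max}^{2s}$, since $R_{\max}\ge1$. The volume is at most $2(R_{\min}+a)(2R_{\max})^{d-1}$. Altogether,
\[
\int_{E_{\rm lo}}\le C\,d\,R_{\max}^{2s+d-1}(R_{\min}+a)\,\|v\|_{L^1}^2\le C R_{\max}^{-2\varepsilon_R}\|v\|_{L^1}^2,
\]
where the last step uses the hypothesis $R_{\min}+a\le R_{\max}^{-(2s+d-1+2\varepsilon_R)}$. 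This is the second term. Combining the two regions and using $\sqrt{A+B}\le\sqrt A+\sqrt B$ gives \eqref{eq:H1-band-error}. The constants depend only on $s$, $d$, and the Fourier normalization.

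The main point requiring care is this slab estimate. The exponent $2s+d-1$ is exactly what the volume-times-weight bound produces, which explains the coupling condition between $R_{\min}+a$ and $R_{\max}$. I also need to check that no extra $L^2$-based bound is required near the coordinate hyperplanes, and the $L^\infty$ bound on $\widehat v$ covers this.

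For the broken-norm consequence, the norm $\|\cdot\|_{s,\Real^d}$ is equivalent to $\|\cdot\|_{H^s(\Real^d)}$. Restriction to each $\Omega_i$ satisfies $\|w|_{\Omega_i}\|_{H^s(\Omega_i)}\le\|w\|_{H^s(\Real^d)}$, so summing the squares over $i=1,2$ gives $\|w\|_{H^s_{\rm br}}^2\le 2\|w\|_{H^s(\Real^d)}^2$. Applying this with $w=v-v_R\in H^s(\Real^d)$ gives the stated inequality with $C_{\rm br}$ depending only on the norm-equivalence constant.
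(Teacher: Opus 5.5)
Your proposal is correct and follows essentially the paper's argument: the high-frequency region is handled by trading $(1+\|\bxi\|_2^2)^{\varepsilon_C}$ for $R_{\max}^{-2\varepsilon_C}$, and the region near the coordinate hyperplanes is handled by $\|\widehat v\|_{L^\infty}\le\|v\|_{L^1(\Real^d)}$ times weight times volume. The only difference is that your single union bound over slabs covers both of the paper's regions $A_{\rm low}$ and $A_{\rm mix}$ at once, which is slightly cleaner because it avoids the separate ``$R_{\min}+a$ sufficiently small'' step; note also that the high-frequency constant depends on $\varepsilon_C$, not only on $s$ and $d$.
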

\begin{proof}
Since $0\le m_{R,a}\le1$, the error is supported where the multiplier is not equal to one. We cover this set by the following high-frequency, low-frequency, and coordinate-hyperplane regions. The high-frequency part is
\[
A_{\rm high}=\{\bxi: |\xi_j|>R_{\max}-a\ \hbox{for some }j\},
\]
a low-frequency box near the origin,
\[
A_{\rm low}=\{\bxi: |\xi_j|<R_{\min}+a\ \hbox{for all }j\},
\]
and a mixed region near the coordinate hyperplanes,
\[
A_{\rm mix}=\{\bxi: |\xi_j|<R_{\min}+a\ \hbox{for at least one }j,
\ |\xi_k|\ge R_{\min}+a\ \hbox{for at least one }k,
\ |\xi_m|\le R_{\max}-a\ \hbox{for all }m\}.
\]

By construction,
\[
\{\bxi:\ m_{R,a}(\bxi)\ne1\}
\subset A_{\rm high}\cup A_{\rm low}\cup A_{\rm mix}.
\]
Hence it is sufficient to estimate the error over these three regions.
On $A_{\rm high}$,
\[
(1+\|\bxi\|_2^2)^s
\le
C R_{\max}^{-2\varepsilon_C}(1+\|\bxi\|_2^2)^{s+\varepsilon_C},
\]
where $a\le R_{\max}/2$ is used. This gives the first term in \eqref{eq:H1-band-error}. On $A_{\rm low}\cup A_{\rm mix}$, use $|\widehat v(\bxi)|\le \|v\|_{L^1(\Real^d)}$. Moreover,
\[
|A_{\rm mix}|
\le C_d(R_{\min}+a)(R_{\max}-a)^{d-1}
\le C_d R_{\max}^{-2s-2\varepsilon_R},
\]
and
\[
|A_{\rm low}|\le C_d(R_{\min}+a)^d.
\]
On $A_{\rm mix}$ one has $(1+\|\bxi\|_2^2)^s\le C R_{\max}^{2s}$. Hence
\[
\int_{A_{\rm mix}}(1+\|\bxi\|_2^2)^s|\widehat v(\bxi)|^2\,d\bxi
\le C R_{\max}^{2s}|A_{\rm mix}|\|v\|_{L^1(\Real^d)}^2
\le C R_{\max}^{-2\varepsilon_R}\|v\|_{L^1(\Real^d)}^2.
\]
On $A_{\rm low}$, $(1+\|\bxi\|_2^2)^s\le C$ for $R_{\min}+a$ sufficiently small, and therefore
\[
\int_{A_{\rm low}}(1+\|\bxi\|_2^2)^s|\widehat v(\bxi)|^2\,d\bxi
\le
C |A_{\rm low}|\|v\|_{L^1(\Real^d)}^2
\le
C(R_{\min}+a)^d\|v\|_{L^1(\Real^d)}^2.
\]
Using $R_{\min}+a\le C R_{\max}^{-(2s+d-1+2\varepsilon_R)}$, and taking $R_{\max}\ge1$, we have
\[
(R_{\min}+a)^d
\le
C R_{\max}^{-d(2s+d-1+2\varepsilon_R)}
\le
C R_{\max}^{-2\varepsilon_R},
\]
since $d(2s+d-1+2\varepsilon_R)\ge2\varepsilon_R$. Therefore
\[
\int_{A_{\rm low}}(1+\|\bxi\|_2^2)^s|\widehat v(\bxi)|^2\,d\bxi
\le
C R_{\max}^{-2\varepsilon_R}\|v\|_{L^1(\Real^d)}^2.
\]
After combining the estimates on $A_{\rm high}$, $A_{\rm mix}$, and
$A_{\rm low}$, we take square roots and use
$\sqrt{a+b}\le \sqrt a+\sqrt b$ to obtain \eqref{eq:H1-band-error}. The broken estimate follows from the continuity of the restriction map $H^s(\Real^d)\to H^s(\Omega_i)$ for each subdomain.
\end{proof}

We first verify the local bounded-parameter representation on the bounded subdomain. The argument is based on the Fourier representation of the band-limited function and a truncation of the bias variables. Since the subdomain is bounded, the truncation error is controlled only by the \(L^1\)-tails of the one-dimensional activation derivatives.

\begin{theorem}[Near-field bounded-parameter representation]
	\label{thm:near-field-representation}
	Let \(v_R\) be the band-limited approximation from Lemma~\ref{lem:band}. Assume
	that
	\(
	\operatorname{supp}\widehat{v_R}
	\subset
	\widehat{\Omega}_R
	:=
	\set{\bw\in\Real^d:
		R_{\min}\le |w_j|\le R_{\max},\ j=1,\ldots,d},
	\)
	where \(0<R_{\min}<R_{\max}<\infty\). Let
	\[
	\Omega_1\subset\set{\bx\in\Real^d:\|\bx\|_2<M}
	\]
	be a bounded subdomain. Suppose that
	\[
	\rho\in C^{s+1}(\Real),\qquad
	\rho^{(j)}\in L^1(\Real),\quad j=1,\ldots,s+1,
	\qquad
	\widehat{\rho'}(1)\ne0.
	\]
	Then, for every \(\varepsilon_F>0\), there exist \(B_1>0\) and a finite
	signed Borel measure \(\nu_1\) supported on
	\[
	\Theta_1
	=
	\set{(\bw,\bb):
		R_{\min}\le |w_j|\le R_{\max},\
		|b_j|\le B_1,\ j=1,\ldots,d}
	\]
	such that
	\[
	\left\|
	v_R-
	\int_{\Theta_1}
	\prod_{j=1}^d\rho(w_jx_j+b_j)\,\mathrm{d}\nu_1(\bw,\bb)
	\right\|_{H^s(\Omega_1)}
	\le
	\varepsilon_F/\sqrt2 .
	\]
	Moreover,
	\[
	\|\nu_1\|_{\rm TV}
	\le
	C_{T_1}(R_{\min},R_{\max},B_1,d,\rho)
	\|v_R\|_{0,\Real^d}.
	\]
	Here \(B_1\) may depend on \(v_R\), \(M\), and the requested accuracy
	\(\varepsilon_F\).
\end{theorem}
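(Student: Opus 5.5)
We will use the classical Fourier-reconstruction (Barron/Siegel--Xu type) identity in each coordinate and then tensorize. Fix \(w\ne0\) and \(x\in\Real\). The substitution \(b\mapsto wx+b\) gives
\[
\int_{\Real}\rho'(wx+b)e^{-ib}\,\mathrm db=e^{iwx}\widehat{\rho'}(1).
\]
Since \(\widehat{\rho'}(1)\ne0\), this yields
\[
e^{iw_jx_j}=\frac{1}{\widehat{\rho'}(1)}\int_{\Real}\rho'(w_jx_j+b_j)e^{-ib_j}\,\mathrm db_j .
\]
To express everything through \(\rho\) itself, we integrate by parts in \(b_j\). Because \(\rho'\in L^1\) and \(\rho\) has limits at \(\pm\infty\), we have \(\int\rho'(wx+b)e^{-ib}\,\mathrm db=i\int(\rho(wx+b)-\rho(-\infty))e^{-ib}\,\mathrm db\) in the improper sense. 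Alternatively, and more cleanly, we keep \(\rho'\) in the exact representation and only pass to \(\rho\) after truncating the bias. On \([-B,B]\), integration by parts gives
\[
\int_{-B}^{B}\rho'(wx+b)e^{-ib}\,\mathrm db=\Bigl[\rho(wx+b)e^{-ib}\Bigr]_{-B}^{B}+i\int_{-B}^{B}\rho(wx+b)e^{-ib}\,\mathrm db .
\]
The boundary terms are themselves of the form \(\rho(w x\pm B)\) times constants. They are therefore features with bias \(\pm B\) and can be absorbed into \(\nu_1\) as point masses on \(\Theta_1\).

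Taking the product over \(j\) and inserting into the Fourier inversion \(v_R(\bx)=(2\pi)^{-d}\int_{\widehat\Omega_R}\widehat{v_R}(\bw)e^{i\bw^\top\bx}\,\mathrm d\bw\) gives an exact representation
\[
v_R(\bx)=\int_{\widehat\Omega_R\times\Real^d}\prod_j\rho'(w_jx_j+b_j)\,\mathrm d\mu(\bw,\bb),
\]
where \(\mathrm d\mu=(2\pi)^{-d}\widehat{\rho'}(1)^{-d}\widehat{v_R}(\bw)e^{-i\sum_j b_j}\,\mathrm d\bw\,\mathrm d\bb\). This measure is complex, so we take real parts; \(v_R\) is real since \(v\) is real and \(m_{R,a}\) may be taken even. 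The density is not integrable in \(\bb\), so we truncate to \(|b_j|\le B_1\). Then we apply the integration-by-parts identity coordinatewise to replace each \(\rho'\) by \(\rho\), plus the boundary point masses. The resulting measure \(\nu_1\) has total variation bounded by
\[
(2\pi)^{-d}|\widehat{\rho'}(1)|^{-d}(2B_1+2)^d\|\widehat{v_R}\|_{L^1(\widehat\Omega_R)} .
\]
Since \(\widehat{\Omega}_R\) is bounded, Cauchy--Schwarz gives \(\|\widehat{v_R}\|_{L^1}\le|\widehat\Omega_R|^{1/2}\|v_R\|_{0,\Real^d}\), which produces the claimed constant \(C_{T_1}(R_{\min},R_{\max},B_1,d,\rho)\).

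The main step is controlling the truncation error in \(H^s(\Omega_1)\). We differentiate under the integral. A derivative \(\partial^\alpha\) with \(|\alpha|\le s\) produces factors \(w_j^{\alpha_j}\rho^{(1+\alpha_j)}(w_jx_j+b_j)\), each bounded by \(R_{\max}^{\alpha_j}\). The error is then the integral over the set where \(|b_j|>B_1\) for some \(j\). We bound it by \(\|\widehat{v_R}\|_{L^1}R_{\max}^s\) times a sum of products. In each product, one factor is the tail \(\int_{|b|>B_1}|\rho^{(1+\alpha_j)}(w_jx_j+b)|\,\mathrm db\), and the other factors are full \(L^1\) norms \(\|\rho^{(1+\alpha_k)}\|_{L^1}\). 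Since \(|x_j|<M\) and \(|w_j|\le R_{\max}\), the shift satisfies \(|w_jx_j|\le R_{\max}M\). The tail is therefore at most \(\int_{|t|>B_1-R_{\max}M}|\rho^{(1+\alpha_j)}(t)|\,\mathrm dt\), which tends to \(0\) as \(B_1\to\infty\) uniformly in \(\bx\in\Omega_1\), because \(\rho^{(j)}\in L^1\) for \(j\le s+1\). This is where boundedness of \(\Omega_1\) is essential.

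Squaring and integrating over \(\Omega_1\), which has finite measure \(\le CM^d\), then gives an \(H^s(\Omega_1)\) error tending to zero. The boundary terms from the integration by parts cause no error because the identity is exact on \([-B_1,B_1]\). We therefore choose \(B_1=B_1(v_R,M,\varepsilon_F)\) large enough that this error is \(\le\varepsilon_F/\sqrt2\).

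We expect the main obstacle to be making the \(\rho'\to\rho\) conversion and the differentiation under the integral sign rigorous with only the stated hypotheses. The boundary point masses must be kept inside \(\Theta_1\), and dominated convergence requires the uniform-in-\(\bx\) tail bound just described. We will first try the version that keeps the truncated \(\rho'\)-representation and integrates by parts only at the end, since that avoids any decay assumption on \(\rho\) itself.
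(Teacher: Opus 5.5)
Your proposal is correct and follows essentially the same route as the paper: Fourier inversion combined with the coordinatewise identity $\int_{\Real}\rho'(w_jx_j+b_j)e^{-ib_j}\,\mathrm{d}b_j=e^{iw_jx_j}\widehat{\rho'}(1)$, then truncation of the biases to $[-B_1,B_1]$. The $H^s(\Omega_1)$ error is controlled uniformly in $\bx\in\Omega_1$ by the tails $\int_{|t|>B_1-R_{\max}M}|\rho^{(1+\alpha_j)}(t)|\,\mathrm{d}t$, and a final integration by parts gives a per-coordinate bias measure of total variation $2+2B_1$, whose point masses at $\pm B_1$ stay in $\Theta_1$. Your extra remarks (choosing $m_{R,a}$ even so that $v_R$ is real and the real-part step is legitimate, and the explicit bound $\|\widehat{v_R}\|_{L^1}\le|\widehat{\Omega}_R|^{1/2}\|v_R\|_{0,\Real^d}$) only spell out details the paper's proof leaves implicit.
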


\begin{proof}
	By Fourier inversion and the support condition on \(\widehat{v_R}\),
	\[
	v_R(\bx)
	=
	\operatorname{Re}\left\{
	\frac1{(2\pi)^d}
	\int_{\widehat{\Omega}_R}
	\widehat{v_R}(\bw)e^{i\bw^\top\bx}\,\mathrm{d}\bw
	\right\}.
	\]
	For each coordinate,
	\[
	\int_{\Real}\rho'(w_jx_j+b_j)e^{-ib_j}\,\mathrm{d}b_j
	=
	e^{iw_jx_j}\widehat{\rho'}(1).
	\]
	Since \(\widehat{\rho'}(1)\ne0\), we obtain
	\[
	v_R(\bx)
	=
	\operatorname{Re}\left\{
	\frac1{(2\pi\widehat{\rho'}(1))^d}
	\int_{\widehat{\Omega}_R}
	\widehat{v_R}(\bw)
	\prod_{j=1}^d
	\int_{\Real}\rho'(w_jx_j+b_j)e^{-ib_j}\,\mathrm{d}b_j
	\,\mathrm{d}\bw
	\right\}.
	\]
	The interchange of integrals is justified by \(\rho'\in L^1(\Real)\) and
	the boundedness of \(\widehat{\Omega}_R\).
	
	For \(B_1>R_{\max}M\), let \(v_{R,B_1}\) be the same expression with each
	\(b_j\)-integral restricted to \(|b_j|\le B_1\). For
	\(|\boldsymbol{\alpha}|\le s\), using
	\[
	\Real^d\setminus[-B_1,B_1]^d
	\subset
	\bigcup_{k=1}^d\{\bb:|b_k|>B_1\},\ \text{and}\ |w_jx_j+b_j|\ge B_1-R_{\max}M
	\]
	we have, for \(\bx\in\Omega_1\),
	\[
	\begin{aligned}
		\left|
		\partial_{\bx}^{\boldsymbol{\alpha}}v_R(\bx)
		-
		\partial_{\bx}^{\boldsymbol{\alpha}}v_{R,B_1}(\bx)
		\right|                                                   
		&\le
		C_{\rho,d}
		\int_{\widehat{\Omega}_R}
		|\widehat{v_R}(\bw)|
		\left|
		\int_{\Real^d\setminus[-B_1,B_1]^d}
		\prod_{j=1}^d
		w_j^{\alpha_j}
		\rho^{(1+\alpha_j)}(w_jx_j+b_j)e^{-ib_j}
		\,\mathrm{d}\bb
		\right|
		\,\mathrm{d}\bw                                           \\
		&\le
		C_{\alpha,R,\rho,d}
		\int_{\widehat{\Omega}_R}
		|\widehat{v_R}(\bw)|
		\sum_{k=1}^d
		\int_{|b_k|>B_1}
		\left|
		\rho^{(1+\alpha_k)}(w_kx_k+b_k)
		\right|
		\,\mathrm{d}b_k
		\,\mathrm{d}\bw                                           \\
		&\le
		C_{\alpha,R,\rho,d}
		\|v_R\|_{0,\Real^d}
		\sum_{k=1}^d
		\eta_{1+\alpha_k}(B_1-R_{\max}M),
	\end{aligned}
	\]
	where
	\[
	\eta_m(L):=\int_{|t|>L}|\rho^{(m)}(t)|\,\mathrm{d}t.
	\]
	Since
	\(\rho^{(m)}\in L^1(\Real)\), \(m=1,\ldots,s+1\), we have
	\(\eta_m(L)\to0\) as \(L\to\infty\). Therefore, after integrating over
	\(\Omega_1\), summing over all \(|\boldsymbol{\alpha}|\le s\), and choosing
	\(B_1\) sufficiently large,
	\[
	\|v_R-v_{R,B_1}\|_{H^s(\Omega_1)}
	\le
	\frac{\varepsilon_F}{\sqrt2}.
	\]
	
	It remains to rewrite \(v_{R,B_1}\) by the original feature \(\rho\). By
	integration by parts,
	\[
	\int_{-B_1}^{B_1}
	\rho'(w_jx_j+b_j)e^{-ib_j}\,\mathrm{d}b_j
	=
	\rho(w_jx_j+B_1)e^{-iB_1}
	-\rho(w_jx_j-B_1)e^{iB_1}      \quad
	+i\int_{-B_1}^{B_1}
	\rho(w_jx_j+b_j)e^{-ib_j}\,\mathrm{d}b_j .
	\]
	Thus there exists a finite complex measure \(\mu_{B_1}\) supported on
	\([-B_1,B_1]\), with \(\|\mu_{B_1}\|_{\rm TV}\le 2+2B_1\), such that
	\[
	\int_{-B_1}^{B_1}
	\rho'(w_jx_j+b_j)e^{-ib_j}\,\mathrm{d}b_j
	=
	\int_{[-B_1,B_1]}
	\rho(w_jx_j+b_j)\,\mathrm{d}\mu_{B_1}(b_j).
	\]
	Substituting this identity into \(v_{R,B_1}\) and taking the real part of the
	resulting complex measure gives a finite signed Borel measure \(\nu_1\)
	supported on \(\Theta_1\) such that
	\[
	v_{R,B_1}(\bx)
	=
	\int_{\Theta_1}\phi_\theta(\bx)\,\mathrm{d}\nu_1(\theta).
	\]
	Moreover,
	\[
	\|\nu_1\|_{\rm TV}
	\le
	\frac{(2+2B_1)^d}{(2\pi|\widehat{\rho'}(1)|)^d}
	\int_{\widehat{\Omega}_R}|\widehat{v_R}(\bw)|\,\mathrm{d}\bw
	\le
	C_{T_1}(R_{\min},R_{\max},B_1,d,\rho)
	\|v_R\|_{0,\Real^d}.
	\]
	Combining this representation with the preceding \(H^s(\Omega_1)\)-estimate
	proves the result.
\end{proof}

We next verify the far-field part. The proof requires control of the Sobolev tail of \(v_R\) on the exterior domain. We use the following derivative-decay lemma for this purpose.

\begin{lemma}[Decay preservation for the band-limited approximation]
	\label{lem:bandlimited-derivative-decay}
	Let \(v_R=P_{R,a}v\) be the band-limited approximation defined in
	Lemma~\ref{lem:band}. Assume that
	\[
	|v(\bx)|
	\le
	\frac{C}{(1+\|\bx\|_2)^k},
	\qquad \bx\in\Real^d,
	\]
	for some \(C>0\) and \(k>d\). Then, for every multi-index
	\(\boldsymbol{\alpha}\) with \(|\boldsymbol{\alpha}|\le s\), there exists
	\(C_{R,\boldsymbol{\alpha}}>0\) such that
	\[
	|\partial_{\bx}^{\boldsymbol{\alpha}}v_R(\bx)|
	\le
	\frac{C_{R,\boldsymbol{\alpha}}}{(1+\|\bx\|_2)^k},
	\qquad \bx\in\Real^d .
	\]
\end{lemma}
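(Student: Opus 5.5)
The plan is to write the band-limited approximation as a convolution with a Schwartz kernel and then use the standard fact that convolving a Schwartz function with a function decaying like \((1+\|\bx\|_2)^{-k}\), \(k>d\), preserves that decay rate.

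\emph{Kernel representation.} Since \(m_{R,a}\in C_c^\infty(\Real^d)\), its inverse Fourier transform
\[
K_{R}(\bx):=(2\pi)^{-d}\int_{\Real^d}m_{R,a}(\bxi)e^{i\bxi^\top\bx}\,\mathrm d\bxi
\]
is a Schwartz function. The decay hypothesis with \(k>d\) gives \(v\in L^1(\Real^d)\cap L^\infty(\Real^d)\). Hence \(\widehat{K_R* v}=m_{R,a}\widehat v=\widehat{v_R}\), and so \(v_R=K_R*v\). Because \(K_R\) is Schwartz and \(v\in L^1\), we may differentiate under the integral. This gives
\[
\partial_{\bx}^{\balpha}v_R=(\partial^{\balpha}K_R)*v ,\qquad |\balpha|\le s .
\]
Each \(G_\alpha:=\partial^{\balpha}K_R\) satisfies \(\|G_\alpha\|_{L^1}<\infty\) and \(|G_\alpha(\bz)|\le C_{R,\alpha,k}(1+\|\bz\|_2)^{-k}\). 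Both constants depend only on finitely many Schwartz seminorms of \(m_{R,a}\), hence only on \(R_{\min},R_{\max},a,\balpha,k\).

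\emph{Splitting the convolution.} Fix \(\bx\) and write \(\partial^{\balpha}v_R(\bx)=\int G_\alpha(\bx-\by)v(\by)\,\mathrm d\by\). Split \(\Real^d\) into the near region \(D_1=\{\by:\|\bx-\by\|_2\le\|\bx\|_2/2\}\) and its complement \(D_2\).

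On \(D_1\) we have \(\|\by\|_2\ge\|\bx\|_2/2\), so \(|v(\by)|\le C2^k(1+\|\bx\|_2)^{-k}\). The contribution of \(D_1\) is therefore at most \(C2^k\|G_\alpha\|_{L^1}(1+\|\bx\|_2)^{-k}\).

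On \(D_2\) we have \(\|\bx-\by\|_2>\|\bx\|_2/2\), so
\[
|G_\alpha(\bx-\by)|\le C_{R,\alpha,k}\,2^k(1+\|\bx\|_2)^{-k}.
\]
The contribution of \(D_2\) is therefore at most \(C_{R,\alpha,k}2^k\|v\|_{L^1}(1+\|\bx\|_2)^{-k}\). Here \(\|v\|_{L^1}\le C\int(1+\|\by\|_2)^{-k}\,\mathrm d\by<\infty\) precisely because \(k>d\).

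Adding the two contributions yields the claim with
\[
C_{R,\balpha}=2^k\bigl(C\|G_\alpha\|_{L^1}+C_{R,\alpha,k}\|v\|_{L^1}\bigr).
\]

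\emph{Main point of care.} There is no real obstacle, but two points need care. First, \(m_{R,a}\) is genuinely smooth and compactly supported, as assumed in Lemma~\ref{lem:band}; a sharp cutoff would destroy the Schwartz property of \(K_R\). Second, the constants degrade as \(a\to0\) or \(R_{\max}\to\infty\), because derivatives of \(m_{R,a}\) grow like powers of \(a^{-1}\). This is why the statement only asserts an \(R\)-dependent constant \(C_{R,\balpha}\). For \(\|\bx\|_2\) small, both regions are handled by the same bounds, since \((1+\|\bx\|_2)^{-k}\) is bounded below there, so no separate case is needed.
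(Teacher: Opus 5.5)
Your proof is correct and follows the paper's argument. Both write $\partial^{\balpha}v_R$ as the convolution of $v$ with the Schwartz kernel $\mathcal F^{-1}[(i\bxi)^{\balpha}m_{R,a}]$ (your $\partial^{\balpha}K_R$), and you also justify $v_R=K_R*v$ and the differentiation under the integral, which the paper asserts directly. The only difference is the closing estimate: the paper uses Peetre's inequality $(1+\|\bx-\by\|_2)^{-k}\le(1+\|\bx\|_2)^{-k}(1+\|\by\|_2)^{k}$ with kernel decay of order $N>d+k$, whereas your near/far split needs only order-$k$ kernel decay but uses $\|v\|_{L^1}<\infty$, i.e.\ $k>d$; both work here.
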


\begin{proof}
	Since
	\[
	\widehat{v_R}(\bxi)=m_{R,a}(\bxi)\widehat v(\bxi),
	\]
	we have
	\[
	\partial_{\bx}^{\boldsymbol{\alpha}}v_R
	=
	K_{\boldsymbol{\alpha}}*v,
	\qquad
	K_{\boldsymbol{\alpha}}
	=
	\mathcal F^{-1}
	\left[
	(i\bxi)^{\boldsymbol{\alpha}}m_{R,a}(\bxi)
	\right].
	\]
	Because \(m_{R,a}\in C_c^\infty(\Real^d)\), the kernel
	\(K_{\boldsymbol{\alpha}}\) is a Schwartz function. Hence, for any \(N>0\),
	\[
	|K_{\boldsymbol{\alpha}}(\by)|
	\le
	C_{\boldsymbol{\alpha},N}(1+\|\by\|_2)^{-N}.
	\]
	Using the decay assumption on \(v\), we obtain
	\[
	\begin{aligned}
		|\partial_{\bx}^{\boldsymbol{\alpha}}v_R(\bx)|
		&\le
		C\int_{\Real^d}
		|K_{\boldsymbol{\alpha}}(\by)|
		(1+\|\bx-\by\|_2)^{-k}
		\,\mathrm d\by                                      \\
		&\le
		C(1+\|\bx\|_2)^{-k}
		\int_{\Real^d}
		(1+\|\by\|_2)^{-N+k}
		\,\mathrm d\by .
	\end{aligned}
	\]
	Here we used
	\[
	(1+\|\bx-\by\|_2)^{-k}
	\le
	C(1+\|\bx\|_2)^{-k}(1+\|\by\|_2)^k .
	\]
	Choosing \(N>d+k\) makes the last integral finite. Therefore,
	\[
	|\partial_{\bx}^{\boldsymbol{\alpha}}v_R(\bx)|
	\le
	\frac{C_{R,\boldsymbol{\alpha}}}
	{(1+\|\bx\|_2)^k},
	\qquad \bx\in\Real^d .
	\]
	This proves the lemma.
\end{proof}

\begin{theorem}[Far-field bounded-parameter representation]
	\label{thm:far-field-representation}
	Let \(v_R\) be the band-limited approximation from Lemma~\ref{lem:band}. Assume that
	\(
	\mathrm{supp}\,\widehat{v_R}
	\subset
	\widehat{\Omega}_R
	:=
	\set{\bxi\in\Real^d:
		R_{\min}\le |\xi_j|\le R_{\max},\ j=1,\ldots,d},
	\)
	where \(0<R_{\min}<R_{\max}<\infty\). Assume also that
	\[
	|v(\bx)|
	\le
	\frac{C}{(1+\|\bx\|_2)^{d+\epsilon_f}},
	\qquad \bx\in\Real^d,
	\]
	for some \(C>0\) and \(\epsilon_f>0\). Let Assumption~\ref{As:act} hold.
	Then, for every fixed \(B_2>0\) and every \(\varepsilon_F>0\), there exists
	\(M>0\) such that, with
	\[
	\Omega_2=\set{\bx\in\Real^d:\|\bx\|_2>M},
	\]
	there is a finite signed Borel measure \(\nu_2\) supported on
	\[
	\Theta_2
	=
	\set{(\bw,\bb):
		R_{\min}\le |w_j|\le R_{\max},\ |b_j|\le B_2,\ j=1,\ldots,d}
	\]
	satisfying
	\[
	\left\|
	v_R-
	\int_{\Theta_2}
	\prod_{j=1}^d\rho(w_jx_j+b_j)\,\mathrm{d}\nu_2(\bw,\bb)
	\right\|_{H^s(\Omega_2)}
	\le
	\varepsilon_F/\sqrt2 .
	\]
	Moreover,
	\[
	\|\nu_2\|_{\rm TV}
	\le
	C_{T_2}(R_{\min},R_{\max},B_2,d,\rho)
	\|v_R\|_{0,\Real^d}.
	\]
\end{theorem}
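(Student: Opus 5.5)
The plan is to exploit the fact that, unlike on $\Omega_1$, on the exterior domain $\Omega_2=\set{\|\bx\|_2>M}$ we have the freedom to choose $M$ large. So instead of a sharp reconstruction of $v_R$, it suffices to show that both $v_R$ and a suitable bounded-parameter integral have small $H^s(\Omega_2)$ norm. I would take $\nu_2$ to be the measure produced by the construction in the proof of Theorem~\ref{thm:near-field-representation}, with the bias truncation level set to the prescribed $B_2$ rather than chosen large. That is, I restrict each $b_j$-integral of the Fourier representation of $v_R$ to $[-B_2,B_2]$, integrate by parts to pass from $\rho'$ to $\rho$, and take the real part. This gives a finite signed measure $\nu_2$ supported on $\Theta_2$ with
\[
\|\nu_2\|_{\rm TV}\le \frac{(2+2B_2)^d}{(2\pi|\widehat{\rho'}(1)|)^d}\int_{\widehat\Omega_R}|\widehat{v_R}|\,\mathrm d\bw\le C_{T_2}\|v_R\|_{0,\Real^d}.
\]
The last inequality uses Cauchy--Schwarz on the bounded set $\widehat\Omega_R$. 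Crucially, $\nu_2$ depends on $B_2$ but not on $M$.

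Write $g:=\int_{\Theta_2}\phi_\theta\,\mathrm d\nu_2(\theta)$. The triangle inequality gives
\[
\|v_R-g\|_{H^s(\Omega_2)}\le \|v_R\|_{H^s(\Omega_2)}+\|g\|_{H^s(\Omega_2)} .
\]
For the first term I apply Lemma~\ref{lem:bandlimited-derivative-decay} with $k=d+\epsilon_f$, which gives $|\partial^{\balpha}v_R(\bx)|\le C_{R,\balpha}(1+\|\bx\|_2)^{-d-\epsilon_f}$. Since $2(d+\epsilon_f)>d$, the integral $\int_{\|\bx\|_2>M}(1+\|\bx\|_2)^{-2d-2\epsilon_f}\,\mathrm d\bx$ tends to $0$ as $M\to\infty$. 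Hence $\|v_R\|_{H^s(\Omega_2)}\le\varepsilon_F/(2\sqrt2)$ for $M$ large.

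For the second term, Minkowski's integral inequality (equivalently, a Bochner-integral estimate, using continuity of $\theta\mapsto\phi_\theta$) gives
\[
\|g\|_{H^s(\Omega_2)}\le\|\nu_2\|_{\rm TV}\sup_{\theta\in\Theta_2}\|\phi_\theta\|_{H^s(\Omega_2)} .
\]
It therefore remains to show that this supremum tends to $0$ as $M\to\infty$. By (A3), for $|\balpha|\le s$,
\[
|\partial^{\balpha}\phi_\theta(\bx)|\le C R_{\max}^{|\balpha|}\prod_{j=1}^d(1+|w_jx_j+b_j|)^{-k_\rho}.
\]
If $\|\bx\|_2>M$, then some coordinate satisfies $|x_k|>M/\sqrt d$. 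I cover $\Omega_2$ by the $d$ slabs $\set{|x_k|>M/\sqrt d}$. On the $k$-th slab, since $|w_k|\ge R_{\min}$, we have $|w_kx_k+b_k|\ge R_{\min}|x_k|-B_2$. The integral factorizes into one-dimensional integrals:
\begin{itemize}
\item for $j\ne k$, the factor is $\int_\Real(1+|w_jt+b_j|)^{-2k_\rho}\,\mathrm dt\le R_{\min}^{-1}\int_\Real(1+|\tau|)^{-2k_\rho}\,\mathrm d\tau<\infty$, because $2k_\rho>1$;
\item for $j=k$, the factor is bounded by $R_{\min}^{-1}\int_{|\tau|>R_{\min}M/\sqrt d-B_2}(1+|\tau|)^{-2k_\rho}\,\mathrm d\tau$, which tends to $0$ as $M\to\infty$.
\end{itemize}
Both bounds are uniform in $\theta\in\Theta_2$. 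Thus $\sup_\theta\|\phi_\theta\|_{H^s(\Omega_2)}\to0$. Choosing $M$ so that this supremum times $\|\nu_2\|_{\rm TV}$ is at most $\varepsilon_F/(2\sqrt2)$ completes the argument.

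The main obstacle is this uniform tail estimate for the features. It is exactly where the lower bound $|w_j|\ge R_{\min}>0$ and the fixed bias bound $B_2$ enter: without them, the shift $b_k/w_k$ could push the bump of $\rho$ into the far field, and the constant $R_{\min}^{-1}$ would blow up. A fallback is available if one only wants the stated inequality: taking $\nu_2=0$ trivially satisfies the TV bound, and the estimate follows from the first term alone. However, the construction above is preferable because it keeps $\nu_2$ comparable to $\nu_1$, which is useful when the two pieces are later discretized by sampling.
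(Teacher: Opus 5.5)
Your proposal is correct and is essentially the paper's argument. You use the same truncated-bias measure $\nu_2$, independent of $M$, with the same total-variation bound via Cauchy--Schwarz on $\widehat{\Omega}_R$. You make the same split into $\|v_R\|_{H^s(\Omega_2)}$, controlled by Lemma~\ref{lem:bandlimited-derivative-decay}, and the far-field norm of the integral representation. That second term is likewise controlled by covering $\Omega_2$ with the slabs $\{|x_m|>M/\sqrt d\}$, changing variables with $|w_j|\ge R_{\min}$, and using $2k_\rho>1$.

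The only difference is in that second estimate. The paper bounds the $\rho'$-form of $v_{R,B_2}$ pointwise by Cauchy--Schwarz, through the functions $G_j$, before integrating. You instead apply Minkowski's inequality to reduce to the uniform feature estimate $\sup_{\theta\in\Theta_2}\|\phi_\theta\|_{H^s(\Omega_2)}\to0$. This is slightly cleaner, and it makes transparent your valid observation that $\nu_2=0$ already satisfies the statement as written.
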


\begin{proof}
	By Lemma~\ref{lem:bandlimited-derivative-decay}, applied with
	\(k=d+\epsilon_f\), and polar-coordinate integration, the far-field Sobolev tail of \(v_R\) satisfies \(\|v_R\|_{H^s(\Omega_2)}^2\le C M^{-d-2\epsilon_f}\). Hence, after increasing \(M\) if necessary,
	\begin{equation}\label{eq:vR-tail-small-short}
		\|v_R\|_{H^s(\Omega_2)}
		\le
		\frac{\varepsilon_F}{2\sqrt2}.
	\end{equation}
	
	By Fourier inversion,
	\[
	v_R(\bx)
	=
	\mathrm{Re}
	\left\{
	\frac{1}{(2\pi)^d}
	\int_{\widehat{\Omega}_R}
	\widehat{v_R}(\bw)e^{i\bw^\top\bx}\,\mathrm{d}\bw
	\right\}.
	\]
	Since \(\widehat{\rho'}(1)\ne0\), for each coordinate,
	\[
	\int_{\Real}\rho'(w_jx_j+b_j)e^{-ib_j}\,\mathrm{d}b_j
	=
	e^{iw_jx_j}\widehat{\rho'}(1),
	\]
	and therefore
	\[
	e^{i\bw^\top\bx}
	=
	\frac{1}{(\widehat{\rho'}(1))^d}
	\prod_{j=1}^d
	\int_{\Real}
	\rho'(w_jx_j+b_j)e^{-ib_j}\,\mathrm{d}b_j .
	\]
	Since \(\rho'\in L^1(\Real)\) and \(\widehat{v_R}\) is supported in the
	bounded set \(\widehat{\Omega}_R\), Fubini's theorem applies.
	
	For the fixed bias bound \(B_2>0\), define
	\[
	v_{R,B_2}(\bx)
	:=
	\mathrm{Re}
	\left\{
	\frac{1}{(2\pi\widehat{\rho'}(1))^d}
	\int_{\widehat{\Omega}_R}
	\widehat{v_R}(\bw)
	\prod_{j=1}^d
	\left(
	\int_{|b_j|\le B_2}
	\rho'(w_jx_j+b_j)e^{-ib_j}
	\,\mathrm{d}b_j
	\right)
	\mathrm{d}\bw
	\right\}.
	\]
	
	We estimate \(v_{R,B_2}\) on \(\Omega_2\). For \(|\balpha|\le s\),
	differentiating under the integral is justified by Assumption~\ref{As:act}
	and the boundedness of \(\widehat{\Omega}_R\times[-B_2,B_2]^d\). The factors
	\(w_j^{\alpha_j}\) are uniformly bounded by \(R_{\max}^{\alpha_j}\), and
	\(\rho'\) is replaced by \(\rho^{(1+\alpha_j)}\). Hence it is enough to
	estimate
	\[
	G_j(x_j,w_j)
	:=
	\int_{-B_2}^{B_2}
	|\rho^{(1+\alpha_j)}(w_jx_j+b_j)|^2
	\,\mathrm{d}b_j .
	\]
	Using Cauchy--Schwarz in the \(\bw\)-integral, we obtain
	\[
	|\partial_{\bx}^{\balpha}v_{R,B_2}(\bx)|^2
	\le
	C\|v_R\|_{0,\Real^d}^2
	\int_{\widehat{\Omega}_R}
	\prod_{j=1}^d
	G_j(x_j,w_j)
	\,\mathrm{d}\bw ,
	\]
	where \(C\) is independent of \(M\).
	
	Set
	\[
	E_m:=\set{\bx\in\Real^d:|x_m|>M/\sqrt d},
	\qquad m=1,\ldots,d.
	\]
	Since \(\Omega_2\subset\bigcup_{m=1}^d E_m\), it suffices to estimate the
	integral over each \(E_m\). Since \(E_m\) only restricts the coordinate
	\(x_m\), Fubini's theorem gives
	\[
	\int_{E_m}\int_{\widehat{\Omega}_R}\prod_{j=1}^dG_j(x_j,w_j)
	\,\mathrm{d}\bw\,\mathrm{d}\bx
	=
	I_m^{\rm tail}\prod_{j\ne m}I_j^{\rm full},
	\]
	where
	\[
	I_m^{\rm tail}
	:=
	\int_{|x_m|>M/\sqrt d}
	\int_{R_{\min}\le |w_m|\le R_{\max}}
	G_m(x_m,w_m)\,\mathrm{d}w_m\,\mathrm{d}x_m
	\]
	and
	\[
	I_j^{\rm full}
	:=
	\int_{\Real}
	\int_{R_{\min}\le |w_j|\le R_{\max}}
	G_j(x_j,w_j)\,\mathrm{d}w_j\,\mathrm{d}x_j .
	\]
	For \(j\ne m\), the change of variable \(u=w_jx_j+b_j\) gives a factor
	\(1/|w_j|\), and since \(R_{\min}\le |w_j|\le R_{\max}\), we have
	\[
	I_j^{\rm full}\le C<\infty .
	\]
	
	For the distinguished coordinate \(m\), assume
	\(M\ge B_2\sqrt d/R_{\min}\). Then, whenever
	\(|x_m|>M/\sqrt d\), \(R_{\min}\le |w_m|\le R_{\max}\), and
	\(|b_m|\le B_2\), we have
	\[
	|w_mx_m+b_m|
	\ge
	R_{\min}|x_m|-B_2	\ge R_{\min}M/\sqrt d-B_2 .
	\]
	Using the decay condition in Assumption~\ref{As:act} and the one-dimensional
	tail integral with \(2k_\rho>1\), we obtain
	\[
	I_m^{\rm tail}
	\le
	C
	\left(1+R_{\min}M/\sqrt d-B_2\right)^{1-2k_\rho}.
	\]
	Combining the preceding estimates and summing over \(m=1,\ldots,d\), we get
	\[
	\int_{\Omega_2}
	|\partial_{\bx}^{\balpha}v_{R,B_2}(\bx)|^2
	\,\mathrm{d}\bx
	\le
	C_{B_2,\balpha}
	\left(1+R_{\min}M/\sqrt d-B_2\right)^{1-2k_\rho},
	\]
	where
	\[
	C_{B_2,\balpha}
	=
	C_{B_2,\balpha}
	\bigl(
	d,\|v_R\|_{0,\Real^d},R_{\min},R_{\max},
	\widehat{\rho'}(1),C_\rho,k_\rho,B_2,s
	\bigr).
	\]
	Since \(2k_\rho>1\), the right-hand side tends to zero as \(M\to\infty\).
	Therefore, after increasing \(M\) if necessary,
	\begin{equation}\label{eq:vRB2-tail-small-short}
		\|v_{R,B_2}\|_{H^s(\Omega_2)}
		\le
		\frac{\varepsilon_F}{2\sqrt2}.
	\end{equation}
	Combining \eqref{eq:vR-tail-small-short} and
	\eqref{eq:vRB2-tail-small-short}, we obtain
	\[
	\|v_R-v_{R,B_2}\|_{H^s(\Omega_2)}
	\le
	\varepsilon_F/\sqrt2 .
	\]
	
	It remains to rewrite \(v_{R,B_2}\) using \(\rho\) instead of \(\rho'\).
	For each coordinate, integration by parts gives
	\[
	\int_{-B_2}^{B_2}
	\rho'(w_jx_j+b_j)e^{-ib_j}\,\mathrm{d}b_j
	=
	\rho(w_jx_j+B_2)e^{-iB_2}
	-
	\rho(w_jx_j-B_2)e^{iB_2}
	+
	i\int_{-B_2}^{B_2}
	\rho(w_jx_j+b_j)e^{-ib_j}\,\mathrm{d}b_j .
	\]
	Thus \(v_{R,B_2}\) can be written as
	\[
	v_{R,B_2}(\bx)
	=
	\int_{\Theta_2}
	\prod_{j=1}^d\rho(w_jx_j+b_j)
	\,\mathrm{d}\nu_2(\bw,\bb),
	\]
	for a finite signed Borel measure \(\nu_2\) supported on \(\Theta_2\).
	Moreover, since \(\widehat{\Omega}_R\) is bounded,
	\[
	\|\nu_2\|_{\rm TV}
	\le
	C(B_2,d,\rho)
	\int_{\widehat{\Omega}_R}
	|\widehat{v_R}(\bw)|\,\mathrm{d}\bw \le
	C(B_2,d,\rho)|\widehat{\Omega}_R|^{1/2}
	\|v_R\|_{0,\Real^d}.
	\]
	Therefore
	\[
	\|\nu_2\|_{\rm TV}
	\le
	C_{T_2}(R_{\min},R_{\max},B_2,d,\rho)
	\|v_R\|_{0,\Real^d}.
	\]
	The proof is complete.
\end{proof}

The near-field and far-field results above provide local bounded-parameter integral representations on \(\Omega_1\) and \(\Omega_2\), respectively. In the far-field result, the exterior radius \(M\) is chosen sufficiently large, depending on the prescribed tolerance and the fixed bias bound \(B_2\), so that the far-field tail error is controlled. Once this radius is fixed, the representations have finite total variation on bounded parameter sets. The following lemma then discretizes these local integral representations into finite randomized-feature sums in the broken Sobolev norm.

\begin{lemma}[Bochner finite-sum approximation]\label{lem:quad}
	Under Assumptions~\ref{As:decay} and~\ref{As:act}, let
	\(v_R=P_{R,a}u^*\) be the band-limited approximation constructed in
	Lemma~\ref{lem:band}. Then, for every \(\varepsilon_D>0\), there exist
	bounded parameter sets \(\Theta_l\), nodes
	\(\theta_k^{(l)}\in\Theta_l\), and real coefficients \(c_k^{(l)}\),
	\(l=1,2\), such that
	\[
	\left\|
	v_R
	-
	\chi_1\sum_{k=1}^{N_1}c_k^{(1)}\phi_{\theta_k^{(1)}}
	-
	\chi_2\sum_{k=1}^{N_2}c_k^{(2)}\phi_{\theta_k^{(2)}}
	\right\|_{H^s_{\rm br}}
	\le \varepsilon_D .
	\]
	Moreover,
	\[
	\sum_{k=1}^{N_l}|c_k^{(l)}|
	\le \|\nu_l\|_{\rm TV},
	\qquad l=1,2,
	\]
	where \(\nu_l\) are the finite signed Borel measures obtained from the
	local representation results on \(\Omega_l\).
\end{lemma}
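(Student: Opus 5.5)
We split the broken error into its two subdomain components and treat each independently. By the definition of the broken norm, the quantity to bound equals
\[
\Bigl(\|v_R-\textstyle\sum_k c_k^{(1)}\phi_{\theta_k^{(1)}}\|_{H^s(\Omega_1)}^2
+\|v_R-\sum_k c_k^{(2)}\phi_{\theta_k^{(2)}}\|_{H^s(\Omega_2)}^2\Bigr)^{1/2}.
\]
For $l=1,2$ we invoke Theorem~\ref{thm:near-field-representation} and Theorem~\ref{thm:far-field-representation} with $\varepsilon_F=\varepsilon_D$. This fixes $B_1$, $B_2$ and $M$ (hence $\Omega_1,\Omega_2$) and gives signed measures $\nu_l$ on the compact sets $\Theta_l$ with
\[
\|v_R-\textstyle\int_{\Theta_l}\phi_\theta\,\mathrm d\nu_l\|_{H^s(\Omega_l)}\le\varepsilon_D/\sqrt2 .
\]
It then suffices, for each $l$, to find a finite sum with $\|\int_{\Theta_l}\phi_\theta\,\mathrm d\nu_l-\sum_k c_k^{(l)}\phi_{\theta_k^{(l)}}\|_{H^s(\Omega_l)}$ arbitrarily small. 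Choosing this discretization error small enough, the triangle inequality on each piece and summation of the squares yield the bound $\varepsilon_D$. (Strictly, one runs the representation theorems with $\varepsilon_F$ slightly below $\varepsilon_D$, which they permit since $\varepsilon_F$ is arbitrary.)

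For the discretization step we regard $\theta\mapsto\phi_\theta|_{\Omega_l}$ as a map $\Theta_l\to H^s(\Omega_l)$. The first task is to show that this map is continuous and uniformly bounded, as asserted after Assumption~\ref{As:act}.

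\emph{Boundedness.} Each derivative $\partial^{\balpha}\phi_\theta$ is a product of factors $w_j^{\alpha_j}\rho^{(\alpha_j)}(w_jx_j+b_j)$. By (A3) each factor is square integrable on $\Real$, and its $L^2$ norm is uniform in $|w_j|\ge R_{\min}$ after the change of variables $u=w_jx_j+b_j$, which costs only a Jacobian factor $1/|w_j|\le 1/R_{\min}$. Hence $\|\phi_\theta\|_{H^s(\Real^d)}\le C(R_{\min},R_{\max},\rho,s,d)$ for all $\theta\in\Theta_l$.

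\emph{Continuity.} For $\theta_n\to\theta$ we have pointwise convergence of all derivatives up to order $s$, since $\rho\in C^{s+1}$. To pass to $L^2(\Omega_l)$ convergence we use a uniform integrable envelope. On bounded sets this is trivial; on the unbounded $\Omega_2$ we use the bound $C\prod_j(1+|w_jx_j+b_j|)^{-2k_\rho}\le C'\prod_j(1+R_{\min}|x_j|-B_l)_+^{-2k_\rho}$, which is integrable because $2k_\rho>1$. Dominated convergence then gives continuity. Consequently $\theta\mapsto\phi_\theta$ is Bochner integrable against $|\nu_l|$, and $\int\phi_\theta\,\mathrm d\nu_l$ is a Bochner integral in $H^s(\Omega_l)$, which agrees with the pointwise integral.

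To build the finite sum, we use the compactness of $\Theta_l$ together with the uniform continuity of $\theta\mapsto\phi_\theta$ on it. Given $\delta>0$, we partition $\Theta_l$ into finitely many Borel cells $P_k$ on each of which $\|\phi_\theta-\phi_{\theta'}\|_{H^s(\Omega_l)}\le\delta$. We then pick nodes $\theta_k^{(l)}\in P_k$ and set $c_k^{(l)}=\nu_l(P_k)$, which is real because $\nu_l$ is signed. This gives
\[
\Bigl\|\int\phi_\theta\,\mathrm d\nu_l-\sum_k c_k^{(l)}\phi_{\theta_k^{(l)}}\Bigr\|_{H^s(\Omega_l)}
\le\sum_k\int_{P_k}\|\phi_\theta-\phi_{\theta_k^{(l)}}\|\,\mathrm d|\nu_l|
\le\delta\|\nu_l\|_{\rm TV},
\]
and also $\sum_k|c_k^{(l)}|\le\sum_k|\nu_l|(P_k)=\|\nu_l\|_{\rm TV}$, which is the claimed coefficient bound. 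Choosing $\delta$ small compared with $\|\nu_l\|_{\rm TV}^{-1}$ finishes the proof.

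The main obstacle is the continuity of the feature map into $H^s(\Omega_2)$ on the unbounded far-field domain. There the $L^2$ convergence needs the uniform decay envelope, and it is exactly there that the assumptions $|w_j|\ge R_{\min}$ and $k_\rho>1/2$ are used. Everything else is a deterministic Riemann-sum argument.
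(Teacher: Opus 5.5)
Your proposal is correct and follows the paper's proof essentially step by step. Both use the local integral representations from Theorems~\ref{thm:near-field-representation} and~\ref{thm:far-field-representation}, then a Riemann-type sum over a fine Borel partition of the compact set $\Theta_l$ with $c_k^{(l)}=\nu_l(A_k^{(l)})$, then the triangle inequality on each subdomain, and finally $\sum_k|\nu_l(A_k^{(l)})|\le|\nu_l|(\Theta_l)$ for the coefficient bound.

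There are two differences from the paper. First, you split the tolerance as $\varepsilon_F<\varepsilon_D$ plus a small discretization error, whereas the paper uses $\varepsilon_D/(2\sqrt2)$ for each. Second, you actually verify the continuity and uniform boundedness of $\theta\mapsto\phi_\theta$ into $H^s(\Omega_l)$, which the paper only asserts after Assumption~\ref{As:act}.

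In that verification, write the dominating envelope as $\bigl(1+(R_{\min}|x_j|-B_l)_+\bigr)^{-2k_\rho}$ rather than $(1+R_{\min}|x_j|-B_l)_+^{-2k_\rho}$. The latter is infinite on a set of positive measure when $B_l>1$, so it is not integrable. The corrected form is integrable because $2k_\rho>1$, and the argument then goes through as you describe.
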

\begin{proof}
	By Theorems~\ref{thm:near-field-representation} and~\ref{thm:far-field-representation},
	there exist bounded parameter sets \(\Theta_l\) and finite signed
	Borel measures \(\nu_l\), \(l=1,2\), such that, after choosing the
	local representation accuracies sufficiently small,
	\[
	\left\|
	v_R-\int_{\Theta_l}\phi_\theta\,\mathrm{d}\nu_l(\theta)
	\right\|_{H^s(\Omega_l)}
	\le \frac{\varepsilon_D}{2\sqrt 2},
	\qquad l=1,2.
	\]
	
By Assumption~\ref{As:act}, the map \(\theta\mapsto\phi_\theta\) is continuous and uniformly bounded from \(\Theta_l\) into \(H^s(\Omega_l)\). Hence the Bochner integral
\[
	\int_{\Theta_l}\phi_\theta\,\mathrm{d}\nu_l(\theta)
\]
is well-defined in \(H^s(\Omega_l)\). Choose a finite Borel partition \(\{A_k^{(l)}\}_{k=1}^{N_l}\) of \(\Theta_l\) with sufficiently small diameter and select \(\theta_k^{(l)}\in A_k^{(l)}\). Set
\[
	c_k^{(l)}=\nu_l(A_k^{(l)}).
\]
By the uniform continuity of \(\theta\mapsto\phi_\theta\) in \(H^s(\Omega_l)\), the Riemann-type sum
\[
	\sum_{k=1}^{N_l}c_k^{(l)}\phi_{\theta_k^{(l)}}
\]
approximates the Bochner integral in \(H^s(\Omega_l)\). We choose the partition sufficiently fine so that
\[
\left\|
\int_{\Theta_l}\phi_\theta\,\mathrm{d}\nu_l(\theta)
-
\sum_{k=1}^{N_l}c_k^{(l)}\phi_{\theta_k^{(l)}}
\right\|_{H^s(\Omega_l)}
\le \frac{\varepsilon_D}{2\sqrt2},
\qquad l=1,2.
\]
Therefore, for each \(l=1,2\),
\[
\left\|
 v_R-
\sum_{k=1}^{N_l}c_k^{(l)}\phi_{\theta_k^{(l)}}
\right\|_{H^s(\Omega_l)}
\le
\frac{\varepsilon_D}{\sqrt2}.
\]
The broken norm identity gives
\[
\begin{aligned}
&\left\|
 v_R-
 \sum_{k=1}^{N_1}\chi_1c_k^{(1)}\phi_{\theta_k^{(1)}}
 -
 \sum_{k=1}^{N_2}\chi_2c_k^{(2)}\phi_{\theta_k^{(2)}}
\right\|_{H^s_{\rm br}}^2 \\
&=
\left\|v_R-
\sum_{k=1}^{N_1}c_k^{(1)}\phi_{\theta_k^{(1)}}
\right\|_{H^s(\Omega_1)}^2
+
\left\|v_R-
\sum_{k=1}^{N_2}c_k^{(2)}\phi_{\theta_k^{(2)}}
\right\|_{H^s(\Omega_2)}^2
\le \varepsilon_D^2.
\end{aligned}
\]
Moreover,
\[
\sum_{k=1}^{N_l}|c_k^{(l)}|
=
\sum_{k=1}^{N_l}|\nu_l(A_k^{(l)})|
\le |\nu_l|(\Theta_l)
=
\|\nu_l\|_{\rm TV},
\qquad l=1,2.
\]
This proves the result.
\end{proof}

\begin{theorem}[Bounded-parameter broken \(H^s\) approximation]
	\label{Thm:H1bound-UAT}
	Under Assumptions~\ref{As:decay} and~\ref{As:act}, the following finite-sum
	approximation result holds. For any \(\varepsilon_A>0\), there exist
	\(R_{\min}>0\), \(R_{\max}>0\), a sufficiently large exterior radius \(M>0\),
	bias bounds \(B_1>0\), \(B_2>0\), coefficient budgets \(C_{N_1}\) and
	\(C_{N_2}\), and two single-hidden-layer networks \(u_{N_1}\) and
	\(u_{N_2}\) with bounded inner parameters such that
	\[
	\|u^*-\chi_1u_{N_1}-\chi_2u_{N_2}\|_{H^s_{\rm br}}
	\le\varepsilon_A .
	\]
	The inner parameters satisfy
	\[
	R_{\min}\le |w_{j,k}^{(l)}|\le R_{\max},
	\qquad
	|b_{j,k}^{(l)}|\le B_l,
	\qquad l=1,2,
	\]
	and
	\[
	\sum_{k=1}^{N_l}|c_k^{(l)}|\le C_{N_l},
	\qquad l=1,2.
	\]
	Here the coefficient budgets may depend on the requested accuracy through
	the total-variation bounds of the representing measures.
\end{theorem}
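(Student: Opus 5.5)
The plan is a triangle inequality in the broken norm, splitting the error into a band-limiting part and a finite-sum discretization part:
\[
\|u^*-\chi_1u_{N_1}-\chi_2u_{N_2}\|_{H^s_{\rm br}}
\le
\|u^*-v_R\|_{H^s_{\rm br}}
+
\|v_R-\chi_1u_{N_1}-\chi_2u_{N_2}\|_{H^s_{\rm br}},
\qquad v_R=P_{R,a}u^*.
\]
Each term will be made at most \(\varepsilon_A/2\).

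\textbf{Step 1: band-limiting error.} Fix \(\varepsilon_R>0\) and choose \(R_{\max}\ge1\) large enough that, by Lemma~\ref{lem:band},
\[
C_{\rm br}\Bigl(CR_{\max}^{-\varepsilon_C}\|u^*\|_{s+\varepsilon_C,\Real^d}
+CR_{\max}^{-\varepsilon_R}\|u^*\|_{L^1(\Real^d)}\Bigr)\le \varepsilon_A/2 .
\]
Both norms are finite by Assumption~\ref{As:decay}. Then choose \(a\) and \(R_{\min}\) with
\[
R_{\min}+a\le R_{\max}^{-(2s+d-1+2\varepsilon_R)},
\]
and fix the multiplier \(m_{R,a}\). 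This fixes \(v_R\).

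Since \(\operatorname{supp}\widehat{v_R}\subset\widehat\Omega_R\) with \(0<R_{\min}<R_{\max}\), the support hypothesis of Theorems~\ref{thm:near-field-representation} and~\ref{thm:far-field-representation} holds. Also \(\|v_R\|_{0,\Real^d}\le\|u^*\|_{0,\Real^d}<\infty\).

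\textbf{Step 2: finite-sum approximation of \(v_R\).} Apply Lemma~\ref{lem:quad} with \(\varepsilon_D=\varepsilon_A/2\). The order of parameter choices matters here.
\begin{itemize}
\item First fix \(B_2>0\). Theorem~\ref{thm:far-field-representation} then supplies the exterior radius \(M\), which depends on \(B_2\), \(\varepsilon_D\) and \(R_{\min}\). Its decay hypothesis is given by Assumption~\ref{As:decay} with \(k=d+\epsilon_f\).
\item With \(M\) fixed, Theorem~\ref{thm:near-field-representation} applied on \(\Omega_1=\{\|\bx\|_2<M\}\) supplies \(B_1>R_{\max}M\).
\end{itemize}
There is no circularity, because \(B_1\) is chosen after \(M\) and \(M\) depends only on \(B_2\). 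Lemma~\ref{lem:quad} then yields nodes \(\theta_k^{(l)}=(\bw_k^{(l)},\bb_k^{(l)})\in\Theta_l\), hence \(R_{\min}\le|w^{(l)}_{j,k}|\le R_{\max}\) and \(|b^{(l)}_{j,k}|\le B_l\). It also yields coefficients with
\[
\sum_k|c_k^{(l)}|\le\|\nu_l\|_{\rm TV}\le C_{T_l}\|v_R\|_{0,\Real^d}.
\]
Set \(C_{N_l}:=C_{T_l}\|u^*\|_{0,\Real^d}\) and \(u_{N_l}=\sum_k c_k^{(l)}\phi_{\theta_k^{(l)}}\). Combining with Step~1 gives the claim.

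\textbf{Main obstacle: continuity of the feature map.} The delicate point is the \(H^s(\Omega_2)\) continuity and uniform boundedness of \(\theta\mapsto\phi_\theta\) on the unbounded far-field domain, which Lemma~\ref{lem:quad} needs. I would verify it directly rather than just citing the remark. For \(|\balpha|\le s\), \(\partial^{\balpha}\phi_\theta\) is a product of the factors \(w_j^{\alpha_j}\rho^{(\alpha_j)}(w_jx_j+b_j)\). The \(L^2(\Real)\) norm of each factor is controlled by \(R_{\max}^{\alpha_j}R_{\min}^{-1/2}\|\rho^{(\alpha_j)}\|_{L^2}\), which is finite by (A3) since \(k_\rho>1/2\). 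This gives uniform boundedness.

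For continuity, bound the difference of two factors by a mean-value estimate using \(\rho^{(\alpha_j+1)}\) on compact sets. Control the tails uniformly through the decay bound \((1+|t|)^{-k_\rho}\) together with \(|w_j|\ge R_{\min}\). This yields uniform continuity on the compact set \(\Theta_l\).

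Finally, the statement asks for \(R_{\min}>0\). This is consistent because \(R_{\min}\) was chosen positive but small in Step~1, and all subsequent constants are allowed to depend on it.
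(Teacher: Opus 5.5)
Your proposal is correct and follows the paper's proof essentially step for step. You band-limit $u^*$ via Lemma~\ref{lem:band} to within $\varepsilon_A/2$, apply Lemma~\ref{lem:quad} with $\varepsilon_D=\varepsilon_A/2$ (its hypotheses supplied by Lemma~\ref{lem:bandlimited-derivative-decay} and the two representation theorems), close with the triangle inequality, and take $C_{N_l}$ at least the total variation of $\nu_l$. Your explicit ordering $B_2\to M\to B_1$ and your direct check that $\theta\mapsto\phi_\theta$ is continuous and uniformly bounded into $H^s(\Omega_l)$ are both sound; the paper leaves the ordering implicit and only asserts the continuity in the remark after Assumption~\ref{As:act}.
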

\begin{proof}
	Choose \(v_R=P_{R,a}u^*\) by Lemma~\ref{lem:band} so that
	\[
	\|u^*-v_R\|_{H^s_{\rm br}}
	\le \varepsilon_A/2 .
	\]
	This is possible by the full-space band-limited estimate and restriction
	continuity.
	
	By Assumption~\ref{As:decay} and
	Lemma~\ref{lem:bandlimited-derivative-decay}, the band-limited approximation
	\(v_R\) and its derivatives up to order \(s\) have the required far-field
	decay. Hence the hypotheses of the near-field and far-field representation
	results are satisfied after choosing a sufficiently large exterior radius
	\(M\). Applying Lemma~\ref{lem:quad} with
	\(\varepsilon_D=\varepsilon_A/2\), we obtain finite sums \(u_{N_1}\) and
	\(u_{N_2}\) such that
	\[
	\|v_R-\chi_1u_{N_1}-\chi_2u_{N_2}\|_{H^s_{\rm br}}
	\le \varepsilon_A/2 .
	\]
	The triangle inequality in \(H^s_{\rm br}\) gives
	\[
	\|u^*-\chi_1u_{N_1}-\chi_2u_{N_2}\|_{H^s_{\rm br}}
	\le \varepsilon_A .
	\]
	The parameter bounds follow from the bounded parameter sets in the near-field
	and far-field representations. The coefficient bounds are obtained by choosing
	\(C_{N_l}\ge \|\nu_l\|_{\rm TV}\), \(l=1,2\), after the representing measures
	have been selected.
\end{proof}

The result should be interpreted in the broken Sobolev setting associated with the chosen domain decomposition. In particular, the exterior radius
\(M\) is selected to control the far-field tail, and the coefficient budgets may depend on the requested accuracy.

\begin{assumption}\label{assmp:param-lip}
For each $l=1,2$, there exists $L_l>0$ such that
\[
	\|\phi_\theta-\phi_{\theta'}\|_{H^s(\Omega_l)}
	\le L_l|\theta-\theta'|,
	\qquad \theta,\theta'\in\Theta_l.
\]
This property follows, for compact parameter sets, from the smoothness and decay assumptions on $\rho$, but it is stated explicitly to make the randomized-parameter argument transparent.
\end{assumption}

The randomized-parameter result below is an existence and consistency statement, not a sharp sampling-complexity estimate. The required sampling radius depends on the coefficient variation $\sum_k|c_k^{(l)}|$, which is controlled by the total variation of the representing measure and may grow as the requested representation accuracy increases. Consequently, the number of sampled hidden parameters may grow rapidly as $\varepsilon_A\to0$.

\begin{theorem}[Random-parameter broken $H^s$ approximation]\label{thm:random-broken-UAT}
Let $u_N=\chi_1u_{N_1}+\chi_2u_{N_2}$ be a deterministic approximant from Theorem~\ref{Thm:H1bound-UAT} satisfying
\[
	\|u^*-u_N\|_{H^s_{\rm br}}\le \varepsilon_A/2.
\]
For $l=1,2$, let $\theta_1^{(l)},\ldots,\theta_{N_l}^{(l)}$ be the target parameters. Assume that the hidden parameters in the $l$-th randomized subnetwork are sampled independently from the uniform distribution on $\Theta_l$. Define
\[
	p_{l,\min}:=
	\min_{1\le k\le N_l}
	\frac{|\Theta_l\cap B(\theta_k^{(l)},\delta_r)|}{|\Theta_l|}.
\]
Assume that $\delta_r>0$ is chosen so that
\[
	\sum_{l=1}^2
	\left(L_l\delta_r\sum_{k=1}^{N_l}|c_k^{(l)}|\right)^2
	\le (\varepsilon_A/2)^2.
\]
Here $p_{l,\min}$ depends on this accuracy-driven radius $\delta_r$. Since the target parameters belong to the bounded sampling set $\Theta_l$, one has $p_{l,\min}>0$, although it may be small when $\delta_r$ is small or when some target parameters are close to the boundary of $\Theta_l$.
If the number of randomly sampled hidden parameters in the $l$-th subnetwork satisfies
\[
	M_l\ge p_{l,\min}^{-1}\ln\frac{2N_l}{\delta_p},
	\qquad l=1,2,
\]
then, with probability at least $1-\delta_p$, every deterministic target parameter lies within distance \(\delta_r\) of at least one sampled hidden parameter. By selecting such nearby sampled parameters, possibly with repetition, and by using the same output coefficients, one obtains randomized subnetworks such that
\[
	\|u^*-\chi_1u_{a_1}-\chi_2u_{a_2}\|_{H^s_{\rm br}}
	\le\varepsilon_A.
\]
\end{theorem}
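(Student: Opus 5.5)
The proof splits into a probabilistic covering step and a deterministic perturbation step, followed by a triangle inequality.

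\textbf{Covering.} Fix $l$ and a target parameter $\theta_k^{(l)}$. Each sampled hidden parameter is uniform on $\Theta_l$, so it falls into $\Theta_l\cap B(\theta_k^{(l)},\delta_r)$ with probability
\[
\frac{|\Theta_l\cap B(\theta_k^{(l)},\delta_r)|}{|\Theta_l|}\ge p_{l,\min}.
\]
By independence, the probability that none of the $M_l$ samples lands in this ball is at most
\[
(1-p_{l,\min})^{M_l}\le e^{-p_{l,\min}M_l}\le \frac{\delta_p}{2N_l},
\]
where the last inequality uses the lower bound on $M_l$. A union bound over the $N_l$ targets of subnetwork $l$ gives failure probability at most $\delta_p/2$. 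A second union bound over $l=1,2$ shows that, with probability at least $1-\delta_p$, every target parameter has a sampled parameter $\tilde\theta_k^{(l)}$ with $|\tilde\theta_k^{(l)}-\theta_k^{(l)}|<\delta_r$. Repeated selections are harmless: if one sample serves several targets, the corresponding coefficients are simply added, and by the triangle inequality the coefficient-$\ell^1$ bound still holds.

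\textbf{Deterministic estimate.} On this event, set
\[
u_{a_l}=\sum_{k=1}^{N_l} c_k^{(l)}\phi_{\tilde\theta_k^{(l)}},\qquad l=1,2.
\]
Since $\tilde\theta_k^{(l)}\in\Theta_l$, Assumption~\ref{assmp:param-lip} and the triangle inequality in $H^s(\Omega_l)$ give
\[
\|u_{N_l}-u_{a_l}\|_{H^s(\Omega_l)}\le \sum_{k=1}^{N_l}|c_k^{(l)}|\,\|\phi_{\theta_k^{(l)}}-\phi_{\tilde\theta_k^{(l)}}\|_{H^s(\Omega_l)}\le L_l\delta_r\sum_{k=1}^{N_l}|c_k^{(l)}|.
\]
Squaring, summing over $l$ with the broken-norm identity, and using the hypothesis on $\delta_r$ yields
\[
\|u_N-\chi_1u_{a_1}-\chi_2u_{a_2}\|_{H^s_{\rm br}}\le \varepsilon_A/2.
\]

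\textbf{Conclusion.} The triangle inequality in $H^s_{\rm br}$, combined with $\|u^*-u_N\|_{H^s_{\rm br}}\le\varepsilon_A/2$, gives the claimed bound $\varepsilon_A$. The output coefficients are unchanged, so the coefficient budgets of Theorem~\ref{Thm:H1bound-UAT} are preserved, and the inner parameters lie in $\Theta_l$. The approximant therefore belongs to the same coefficient-bounded broken trial space.

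The main obstacle is the covering step: making sure that $p_{l,\min}>0$ and that the probability space is well specified. Positivity of $p_{l,\min}$ holds because $\Theta_l$ is a finite union of boxes, so a ball centred at a point of $\Theta_l$ meets it in a set of positive measure. The Lipschitz estimate is only invoked at points of $\Theta_l$, where Assumption~\ref{assmp:param-lip} applies.
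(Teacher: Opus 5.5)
Your proposal is correct and follows essentially the same route as the paper: a per-target miss probability $(1-p_{l,\min})^{M_l}\le e^{-p_{l,\min}M_l}$, union bounds over the $N_l$ targets and over $l=1,2$, then the Lipschitz perturbation estimate with unchanged output coefficients (repetitions allowed), followed by the triangle inequality in $H^s_{\rm br}$. Your justification of $p_{l,\min}>0$ is sharper than the paper's appeal to boundedness alone: $\Theta_l$ is a finite union of boxes with nonempty interior, so every ball centred in $\Theta_l$ meets it in positive measure.
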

\begin{proof}
For each subnetwork, the probability that a fixed target parameter ball is missed by $M_l$ independent samples is at most $(1-p_{l,\min})^{M_l}\le e^{-M_lp_{l,\min}}$. Hence the probability that some target ball in the $l$-th subnetwork is missed is bounded by $N_l e^{-M_lp_{l,\min}}\le\delta_p/2$. A union bound over $l=1,2$ gives probability at least $1-\delta_p$.

On this event, for each target parameter $\theta_k^{(l)}$ there exists a sampled parameter $\widetilde\theta_k^{(l)}$ such that $|\theta_k^{(l)}-\widetilde\theta_k^{(l)}|\le\delta_r$. We allow the same sampled hidden parameter to be used for more than one matched deterministic term; equivalently, repeated hidden parameters may be merged by summing their output coefficients. Thus no distinct matching argument is needed. Keeping the same output coefficients gives
\[
\begin{aligned}
\|u_N-u_a\|_{H^s_{\rm br}}^2
&=
\sum_{l=1}^2
\left\|
\sum_{k=1}^{N_l}c_k^{(l)}
\left(\phi_{\theta_k^{(l)}}-\phi_{\widetilde\theta_k^{(l)}}\right)
\right\|_{H^s(\Omega_l)}^2 \\
&\le
\sum_{l=1}^2
\left(L_l\delta_r\sum_{k=1}^{N_l}|c_k^{(l)}|\right)^2
\le (\varepsilon_A/2)^2.
\end{aligned}
\]
Therefore,
\[
\begin{aligned}
	\|u^*-u_a\|_{H^s_{\rm br}}
	\le
	\|u^*-u_N\|_{H^s_{\rm br}}+
	\|u_N-u_a\|_{H^s_{\rm br}}\le \varepsilon_A.
\end{aligned}
\]
\end{proof}

\subsection{Quadrature Consistency of the Empirical Broken Loss}

The abstract error estimate \eqref{eq:broken-error-decomposition} only requires the uniform consistency quantity
\[
	S_{\rm br}=
	\sup_{u\in\cN_{\rho,\mathbf N}^{\rm br}}
	|\cL_{\rm br}(u)-\widetilde\cL_{\rm br}(u)|.
\]
We next give a deterministic quadrature-consistency estimate for the practical broken loss used in the numerical method. The result is deliberately stated for an $L^2$-type residual and an $L^2(\Gamma)$-type interface surrogate. It should not be interpreted as a full sampling theorem for the negative Sobolev loss or for the strong trace norms $H^{3/2}(\Gamma)$ and $H^{1/2}(\Gamma)$ appearing in the abstract graph norm. The term empirical in this subsection means a finite quadrature or collocation approximation of a population loss. The estimate below is deterministic; it is not a probabilistic concentration bound for independently sampled points. A probabilistic version would require additional covering-number or concentration arguments for the sampled quadrature rules.

For a truncation radius $R>M$, set
\[
	\Omega_{1,R}:=\Omega_1,
	\qquad
	\Omega_{2,R}:=\Omega_2\cap B_R.
\]
For the practical interface residual, set
\[
\mathcal I_\Gamma(u)=
\begin{cases}
[\partial_nu], & \hbox{for a smooth artificial interface with matching coefficients},\\
\mathcal F_\Gamma(u), & \hbox{for a conormal transmission problem}.
\end{cases}
\]
Thus, in a discontinuous-coefficient transmission problem, the normal-derivative jump in the practical loss should be replaced by the conormal-flux residual. In both cases, \(\mathcal I_\Gamma\) is a linear interface residual. Hence, if the exact solution satisfies \([u^*]=0\) and \(\mathcal I_\Gamma(u^*)=0\), then \([u]=[u-u^*]\) and \(\mathcal I_\Gamma(u)=\mathcal I_\Gamma(u-u^*)\) in the error equation.

For clarity, we distinguish the inhomogeneous data-residual loss from the homogeneous error-residual loss. For a trial function $u=\chi_1u_1+\chi_2u_2$, define the truncated practical data loss
\begin{equation}\label{eq:truncated-practical-loss}
\begin{aligned}
	\cL_{{\rm data},R}^{\rm q}(u)
	:=
	\sum_{l=1}^2
	\int_{\Omega_{l,R}}|\cG_lu_l-f_l|^2\,\mathrm dx  +
	\int_\Gamma
	\Bigl(|[u]|^2+|\mathcal I_\Gamma(u)|^2\Bigr)\,\mathrm ds .
\end{aligned}
\end{equation}
The corresponding full-domain practical data loss is
\begin{equation}\label{eq:full-practical-data-loss}
\begin{aligned}
	\cL_{{\rm data}}^{\rm q}(u)
	:=
	\sum_{l=1}^2
	\int_{\Omega_l}|\cG_lu_l-f_l|^2\,\mathrm dx +
	\int_\Gamma
	\Bigl(|[u]|^2+|\mathcal I_\Gamma(u)|^2\Bigr)\,\mathrm ds .
\end{aligned}
\end{equation}
For an admissible error function $v$, define the associated homogeneous error loss
\begin{equation}\label{eq:practical-error-loss}
\begin{aligned}
	\cL_{{\rm err}}^{\rm q}(v)
	:=
	\sum_{l=1}^2
	\int_{\Omega_l}|\cG_lv_l|^2\,\mathrm dx +
	\int_\Gamma
	\Bigl(|[v]|^2+|\mathcal I_\Gamma(v)|^2\Bigr)\,\mathrm ds .
\end{aligned}
\end{equation}
The empirical quadrature version of the truncated practical data loss is
\begin{equation}\label{eq:empirical-practical-loss}
\begin{aligned}
	\widetilde\cL_{{\rm data},R}^{\rm q}(u)
	:=&
	\sum_{l=1}^2
	Q_l\bigl(|\cG_lu_l-f_l|^2\bigr)
	+
	Q_\Gamma\bigl(|[u]|^2+|\mathcal I_\Gamma(u)|^2\bigr)  \\
	=&
	\sum_{l=1}^2\sum_{q=1}^{N_l^{\rm q}}
	\omega_{l,q}|\cG_lu_l(x_{l,q})-f_l(x_{l,q})|^2  +
	\sum_{q=1}^{N_\Gamma}\omega_{\Gamma,q}
	\Bigl(|[u](x_{\Gamma,q})|^2+|\mathcal I_\Gamma(u)(x_{\Gamma,q})|^2\Bigr).
\end{aligned}
\end{equation}
Here $Q_l$ denotes a quadrature rule on $\Omega_{l,R}$ and $Q_\Gamma$ denotes a quadrature rule on the interface $\Gamma$.

\begin{assumption}[Far-field tail control]\label{assmp:tail-control}
There exists a function $E_{\rm tail}(R)\to0$ as $R\to\infty$ such that
\[
	\sup_{u\in\cN_{\rho,\mathbf N}^{\rm br}}
	\int_{\Omega_2\setminus B_R}|\cG_2u_2-f_2|^2\,\mathrm dx
	\le E_{\rm tail}(R).
\]
\end{assumption}

This assumption is needed because a finite quadrature rule only acts on a truncated computational region. It is a uniform tail assumption over the coefficient-bounded trial space. It is generally valid only for trial spaces with sufficient far-field decay, or after imposing an appropriate weighted-loss or truncation strategy; it is not guaranteed for non-decaying activations such as $\tanh$, unless the stated tail-control assumption can be verified. In particular, it also requires the forcing tail
	\[
	\int_{\Omega_2\setminus B_R}|f_2|^2\,\mathrm dx
	\]
	to vanish as $R\to\infty$. Hence it is a condition on both the far-field trial features and the data. If a mapped or weighted quadrature rule is used instead of a hard truncation, then $E_{\rm tail}(R)$ should be replaced by the corresponding mapping or weight-induced tail error.
	
\begin{assumption}[Uniform quadrature accuracy]\label{assmp:quad-accuracy}
For some integer $m_q\ge1$, the quadrature rules satisfy
\[
\left|
\int_{\Omega_{l,R}} g\,\mathrm dx-Q_l(g)
\right|
\le
C_{Q,l}(R)h_l^{m_q}\|g\|_{W^{m_q,\infty}(\Omega_{l,R})},
\qquad l=1,2,
\]
for all $g\in W^{m_q,\infty}(\Omega_{l,R})$, and
\[
\left|
\int_\Gamma q\,\mathrm ds-Q_\Gamma(q)
\right|
\le
C_{Q,\Gamma}h_\Gamma^{m_q}\|q\|_{W^{m_q,\infty}(\Gamma)}
\]
for all $q\in W^{m_q,\infty}(\Gamma)$. Here \(h_l\) and \(h_\Gamma\) denote the quadrature resolution parameters on \(\Omega_{l,R}\) and \(\Gamma\), respectively, such as mesh sizes or fill distances. The constants associated with the exterior truncated region, such as $C_{Q,2}(R)$, may depend on the truncation radius $R$.
\end{assumption}

 This is an assumption on the chosen deterministic quadrature or cubature rule. For randomly sampled collocation points, such an estimate must be replaced by a probabilistic uniform law of large numbers or a concentration bound.

\begin{assumption}[Uniform integrand regularity]
\label{assmp:loss-integrand-regularity}
There exist constants $C_{g,1}$, $C_{g,2}(R)$, and $C_{\Gamma}$ such that
\begin{align*}
\sup_{u\in\cN_{\rho,\mathbf N}^{\rm br}}
\left\||\cG_1u_1-f_1|^2\right\|_{W^{m_q,\infty}(\Omega_{1})}
&\le C_{g,1},\\
\sup_{u\in\cN_{\rho,\mathbf N}^{\rm br}}
\left\||\cG_2u_2-f_2|^2\right\|_{W^{m_q,\infty}(\Omega_{2,R})}
&\le C_{g,2}(R),
\end{align*}
and
\begin{equation*}
\sup_{u\in\cN_{\rho,\mathbf N}^{\rm br}}
\left\||[u]|^2+|\mathcal I_\Gamma(u)|^2\right\|_{W^{m_q,\infty}(\Gamma)}
\le C_{\Gamma}.
\end{equation*}
\end{assumption}

\begin{lemma}[A sufficient condition for uniform integrand regularity]\label{lem:uniform-integrand-regularity}
Assume that each second-order operator has the form
\[
	\cG_l v=-\nabla\cdot(\beta_l\nabla v)+\gamma_l v,
\]
with coefficients sufficiently smooth so that
\[
	\beta_l\in W^{m_q+1,\infty}(\Omega_{l,R}),\qquad\gamma_l\in W^{m_q,\infty}(\Omega_{l,R}),
	\qquad
	f_l\in W^{m_q,\infty}(\Omega_{l,R}).
\]
Assume also that the bounded parameter sets and the activation function satisfy the uniform feature bounds
\[
	\sup_{\theta\in\Theta_l}\|\phi_\theta\|_{W^{m_q+2,\infty}(\Omega_{l,R})}
	\le C_{\phi,l}(R),
\]
and, on the interface,
\[
	\sup_{\theta\in\Theta_l}\|\phi_\theta\|_{W^{m_q+1,\infty}(\Gamma)}
	\le C_{\phi,\Gamma,l}.
\]
In the conormal case $\mathcal I_\Gamma(u)=\mathcal F_\Gamma(u)$, assume also that the coefficient traces entering the conormal flux are bounded in $W^{m_q,\infty}(\Gamma)$; for smooth interfaces this follows from the stated interior regularity of $\beta_l$.

Then Assumption~\ref{assmp:loss-integrand-regularity} holds, with constants depending on the coefficient bounds $C_{N_l}$, the parameter bounds, the activation bounds, the PDE coefficients, $f_l$, and the geometry, but not on the particular trial function $u$.
\end{lemma}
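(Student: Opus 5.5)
The plan is to bound every trial function in $\cN_{\rho,\mathbf N}^{\rm br}$ through its coefficients, and then push these bounds through the operator, the squaring, and the traces using product rules in $W^{m_q,\infty}$.

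\emph{Step 1 (trial functions).} Take $u=\chi_1u_1+\chi_2u_2\in\cN_{\rho,\mathbf N}^{\rm br}$ with $u_l=\sum_k c_k^{(l)}\phi_{\theta_k^{(l)}}$ and $\theta_k^{(l)}\in\Theta_l$. The triangle inequality and the coefficient budget give
\[
\|u_l\|_{W^{m_q+2,\infty}(\Omega_{l,R})}\le C_{N_l}C_{\phi,l}(R),
\qquad
\|u_l\|_{W^{m_q+1,\infty}(\Gamma)}\le C_{N_l}C_{\phi,\Gamma,l}.
\]
Both bounds are independent of $N_l$ and of the particular coefficients. For the interface, I read $W^{m_q+1,\infty}(\Gamma)$ of a feature as control of the traces of $\phi_\theta$ and $\nabla\phi_\theta$ with $m_q$ tangential derivatives. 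On a smooth, compact $\Gamma$ this also follows from uniform $W^{m_q+1,\infty}$ bounds on a neighborhood of $\Gamma$, which the same decay and smoothness assumptions supply.

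\emph{Step 2 (residual).} Expand the operator as
\[
\cG_lu_l=-\beta_l\Delta u_l-\nabla\beta_l\cdot\nabla u_l+\gamma_lu_l .
\]
The Leibniz rule in $W^{m_q,\infty}$ gives $\|fg\|_{W^{m_q,\infty}}\le C\|f\|_{W^{m_q,\infty}}\|g\|_{W^{m_q,\infty}}$, which holds on any domain by direct differentiation. Together with $\beta_l\in W^{m_q+1,\infty}$, so that $\nabla\beta_l\in W^{m_q,\infty}$, and $\gamma_l\in W^{m_q,\infty}$, this yields
\[
\|\cG_lu_l\|_{W^{m_q,\infty}}\le C(\beta_l,\gamma_l)\,\|u_l\|_{W^{m_q+2,\infty}}.
\]
Subtracting $f_l\in W^{m_q,\infty}$ gives a uniform bound $K_l$ on $\|\cG_lu_l-f_l\|_{W^{m_q,\infty}(\Omega_{l,R})}$.

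For the square, I write $|g|^2=g\bar g$ in the complex case, or $g^2$ in the real case. Another application of the product rule gives
\[
\||g|^2\|_{W^{m_q,\infty}}\le CK_l^2,
\]
and this defines $C_{g,1}$ and $C_{g,2}(R)$. The dependence on $R$ enters only through $C_{\phi,2}(R)$ and the coefficient norms on $\Omega_{2,R}$.

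\emph{Step 3 (interface).} The jump $[u]=u_1|_\Gamma-u_2|_\Gamma$ is bounded in $W^{m_q,\infty}(\Gamma)$ by $\sum_lC_{N_l}C_{\phi,\Gamma,l}$.

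For $\mathcal I_\Gamma(u)$, the normal derivative is $\nabla u_l\cdot\bn$. Since $\Gamma$ is a sphere, the normal $\bn$ is smooth on $\Gamma$ and belongs to $W^{m_q,\infty}(\Gamma)$. The product rule then bounds $\nabla u_l\cdot\bn$ by $C\|u_l\|_{W^{m_q+1,\infty}(\Gamma)}$. In the conormal case, the hypothesis on the coefficient traces $\beta_l|_\Gamma\in W^{m_q,\infty}(\Gamma)$ adds one more product factor.

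Squaring and summing as in Step 2 gives the constant $C_\Gamma$.

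\emph{Main obstacle.} The only delicate point is the meaning of Sobolev norms on the curved manifold $\Gamma$. This covers tangential derivatives of traces of ambient derivatives, and the product rule on $\Gamma$ with geometry-dependent constants. I would handle it with finitely many smooth charts of $\Gamma$, in which the tangential derivatives become combinations of ambient derivatives with smooth coefficients. These geometric constants are exactly the stated dependence on geometry. Every other step is a uniform application of the triangle inequality and the Leibniz rule.
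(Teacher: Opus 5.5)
Your proposal is correct and follows essentially the same route as the paper. You bound $u_l$ in $W^{m_q+2,\infty}(\Omega_{l,R})$ and on $\Gamma$ via the coefficient budget and the triangle inequality, push this through $\cG_l$ with Leibniz/product estimates, subtract $f_l$, and use the algebra property of $W^{m_q,\infty}$ for both the squared residual and the interface integrands. Your extra remark on what the interface norm must control also fills in a point the paper passes over with ``follows similarly'': the traces of $\nabla\phi_\theta$, and not only tangential derivatives of $\phi_\theta|_\Gamma$, are needed for $[\partial_n u]$ and $\mathcal F_\Gamma(u)$, and they are available from the one-sided $W^{m_q+2,\infty}(\Omega_{l,R})$ bounds.
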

\begin{proof}
For $u_l=\sum_k c_k^{(l)}\phi_{\theta_k^{(l)}}$ and
$\sum_k|c_k^{(l)}|\le C_{N_l}$, the feature bound gives
\[
	\|u_l\|_{W^{m_q+2,\infty}(\Omega_{l,R})}
	\le
	C_{N_l}C_{\phi,l}(R).
\]
The assumed $W^{m_q+1,\infty}$-smoothness of $\beta_l$ and $W^{m_q,\infty}$-smoothness of $\gamma_l$ imply, by standard product estimates in $W^{m_q,\infty}$,
\[
	\|\cG_lu_l\|_{W^{m_q,\infty}(\Omega_{l,R})}
	\le C\,C_{N_l}C_{\phi,l}(R).
\]
Hence
\[
	\|\cG_lu_l-f_l\|_{W^{m_q,\infty}(\Omega_{l,R})}
	\le C\bigl(C_{N_l}C_{\phi,l}(R)+\|f_l\|_{W^{m_q,\infty}(\Omega_{l,R})}\bigr).
\]
Since $W^{m_q,\infty}$ is an algebra for integer $m_q\ge1$, we obtain the subdomain bounds in the form used in Assumption~\ref{assmp:loss-integrand-regularity}. For the interior subdomain, the constants are independent of the exterior truncation radius and
\[
	\left\||\cG_1u_1-f_1|^2\right\|_{W^{m_q,\infty}(\Omega_1)}
	\le C_{g,1}.
\]
For the truncated exterior subdomain, the constants may depend on $R$ and
\[
	\left\||\cG_2u_2-f_2|^2\right\|_{W^{m_q,\infty}(\Omega_{2,R})}
	\le C_{g,2}(R).
\]
The interface estimate follows similarly from the trace feature bounds on $\Gamma$ and the coefficient bounds:
\[
	\|[u]\|_{W^{m_q,\infty}(\Gamma)}+
	\|\mathcal I_\Gamma(u)\|_{W^{m_q,\infty}(\Gamma)}
	\le C,
\]
and another use of the algebra property gives the asserted bound for the interface integrand.
\end{proof}

\begin{theorem}[Quadrature consistency of the empirical broken loss]\label{thm:quadrature-consistency}
Under Assumptions~\ref{assmp:tail-control}, \ref{assmp:quad-accuracy}, and \ref{assmp:loss-integrand-regularity}, the practical empirical broken loss satisfies
\begin{equation}\label{eq:quadrature-consistency-bound}
\begin{aligned}
&\sup_{u\in\cN_{\rho,\mathbf N}^{\rm br}}
\left|
\cL_{{\rm data}}^{\rm q}(u)-\widetilde\cL_{{\rm data},R}^{\rm q}(u)
\right|  \\
&\qquad\le
C_{Q,1}C_{g,1}h_1^{m_q}
+
C_{Q,2}(R)C_{g,2}(R)h_2^{m_q}
+
C_{Q,\Gamma}C_\Gamma h_\Gamma^{m_q}
+E_{\rm tail}(R),
\end{aligned}
\end{equation}
where
\[
	\cL_{{\rm data}}^{\rm q}(u)
	:=
	\sum_{l=1}^2
	\int_{\Omega_l}|\cG_lu_l-f_l|^2\,\mathrm dx
	+
	\int_\Gamma
	\Bigl(|[u]|^2+|\mathcal I_\Gamma(u)|^2\Bigr)\,\mathrm ds.
\]
The constants associated with the truncated exterior region may depend on $R$. Therefore convergence of the right-hand side requires balancing the quadrature resolution $h_2$ on $\Omega_{2,R}$ with the far-field tail $E_{\rm tail}(R)$.
\end{theorem}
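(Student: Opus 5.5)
The proof is a triangle-inequality splitting. Fix an arbitrary $u\in\cN_{\rho,\mathbf N}^{\rm br}$ and insert the truncated population loss $\cL_{{\rm data},R}^{\rm q}(u)$ from \eqref{eq:truncated-practical-loss} between the full-domain loss and its empirical version:
\[
\bigl|\cL_{{\rm data}}^{\rm q}(u)-\widetilde\cL_{{\rm data},R}^{\rm q}(u)\bigr|
\le
\bigl|\cL_{{\rm data}}^{\rm q}(u)-\cL_{{\rm data},R}^{\rm q}(u)\bigr|
+
\bigl|\cL_{{\rm data},R}^{\rm q}(u)-\widetilde\cL_{{\rm data},R}^{\rm q}(u)\bigr| .
\]
The first term is a truncation error and the second is a pure quadrature error on bounded regions. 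Every bound below is uniform in $u$, so the supremum over the coefficient-bounded trial space can be taken at the end.

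For the truncation term, compare \eqref{eq:full-practical-data-loss} with \eqref{eq:truncated-practical-loss}. Since $\Omega_{1,R}=\Omega_1$ and the interface integrals are identical, the only difference is the exterior residual on $\Omega_2\setminus\Omega_{2,R}=\Omega_2\setminus B_R$. This difference is nonnegative and equals
\[
\int_{\Omega_2\setminus B_R}|\cG_2u_2-f_2|^2\,\mathrm dx ,
\]
which Assumption~\ref{assmp:tail-control} bounds by $E_{\rm tail}(R)$ uniformly over the trial space. This uniformity is exactly why the assumption was stated as a supremum.

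For the quadrature term, split the difference into three integrals, each compared with its rule: $g_l:=|\cG_lu_l-f_l|^2$ on $\Omega_{l,R}$ for $l=1,2$, and $q:=|[u]|^2+|\mathcal I_\Gamma(u)|^2$ on $\Gamma$. Assumption~\ref{assmp:quad-accuracy} applied to each gives
\[
\Bigl|\int_{\Omega_{l,R}}g_l\,\mathrm dx-Q_l(g_l)\Bigr|\le C_{Q,l}(R)h_l^{m_q}\|g_l\|_{W^{m_q,\infty}(\Omega_{l,R})},
\qquad
\Bigl|\int_\Gamma q\,\mathrm ds-Q_\Gamma(q)\Bigr|\le C_{Q,\Gamma}h_\Gamma^{m_q}\|q\|_{W^{m_q,\infty}(\Gamma)} .
\]
The integrand norms are then replaced by the uniform constants $C_{g,1}$, $C_{g,2}(R)$, and $C_\Gamma$ from Assumption~\ref{assmp:loss-integrand-regularity}; Lemma~\ref{lem:uniform-integrand-regularity} supplies a sufficient condition for these. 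For $l=1$ the domain is the fixed bounded near field, so we write $C_{Q,1}$ without $R$-dependence. Summing the three quadrature bounds and the tail bound gives \eqref{eq:quadrature-consistency-bound} for each $u$, and hence for the supremum.

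No step is genuinely hard; the point requiring care is bookkeeping. First, the inserted intermediate quantity must be the truncated population loss, so that truncation and quadrature separate cleanly. Second, the integrands belong to the required $W^{m_q,\infty}$ classes only through the uniform regularity assumption, and this is what legitimizes applying Assumption~\ref{assmp:quad-accuracy} to them. Finally, because $C_{Q,2}(R)C_{g,2}(R)$ may grow with $R$ while $E_{\rm tail}(R)$ decays, convergence requires refining $h_2$ as $R$ increases. This is precisely the balancing remark in the statement.
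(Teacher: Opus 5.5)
Your proposal is correct and follows the paper's proof essentially line for line. The paper splits the difference into the same four pieces: the two subdomain quadrature errors on $\Omega_{l,R}$, the interface quadrature error, and the exterior tail $\int_{\Omega_2\setminus B_R}|\cG_2u_2-f_2|^2\,\mathrm dx$, bounded by Assumptions~\ref{assmp:quad-accuracy}, \ref{assmp:loss-integrand-regularity}, and \ref{assmp:tail-control} before taking the supremum; your explicit insertion of $\cL_{{\rm data},R}^{\rm q}(u)$ only makes that splitting more visible.
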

\begin{proof}
For each $u\in\cN_{\rho,\mathbf N}^{\rm br}$, write
\[
	g_l(u)=|\cG_lu_l-f_l|^2,
	\qquad
	q_\Gamma(u)=|[u]|^2+|\mathcal I_\Gamma(u)|^2.
\]
Then
\[
\begin{aligned}
&\left|
\cL_{{\rm data}}^{\rm q}(u)-\widetilde\cL_{{\rm data},R}^{\rm q}(u)
\right| \\
&\le
\sum_{l=1}^2
\left|
\int_{\Omega_{l,R}}g_l(u)\,\mathrm dx-Q_l(g_l(u))
\right|
+
\left|
\int_\Gamma q_\Gamma(u)\,\mathrm ds-Q_\Gamma(q_\Gamma(u))
\right| \\
&\quad+
\int_{\Omega_2\setminus B_R}|\cG_2u_2-f_2|^2\,\mathrm dx .
\end{aligned}
\]
Assumptions~\ref{assmp:quad-accuracy} and \ref{assmp:loss-integrand-regularity} bound the two subdomain quadrature terms and the interface quadrature term by
\[
C_{Q,1}C_{g,1}h_1^{m_q}
+
C_{Q,2}(R)C_{g,2}(R)h_2^{m_q}
+
C_{Q,\Gamma}C_\Gamma h_\Gamma^{m_q},
\]
while Assumption~\ref{assmp:tail-control} bounds the far-field tail by $E_{\rm tail}(R)$ uniformly over the coefficient-bounded trial space. Taking the supremum over $u$ gives \eqref{eq:quadrature-consistency-bound}.
\end{proof}

Since the exact solution satisfies the practical strong-form and transmission
conditions, the practical data loss and the homogeneous error loss are related
by
\[
\cL_{{\rm data}}^{\rm q}(u)
=
\cL_{{\rm err}}^{\rm q}(u-u^*)
\qquad
\text{for all }u\in\cN_{\rho,\mathbf N}^{\rm br}.
\]
Applying the same least-squares argument as in
\eqref{eq:broken-error-decomposition}, but now to the practical loss
\(\cL_{\rm data}^{\rm q}\), and then using the quadrature-consistency bound
\eqref{eq:quadrature-consistency-bound}, yields the following consequence.

	\begin{corollary}
	\label{cor:ls-quadrature-error}
	Assume that the operators \(\cG_l\) and the interface operator
	\(\mathcal I_\Gamma\) are linear, and that the exact solution satisfies
	\[
	\cG_lu_l^*=f_l\quad \hbox{in }\Omega_l,\qquad l=1,2,
	\qquad [u^*]=0,
	\qquad \mathcal I_\Gamma(u^*)=0 .
	\]
	Assume also that, for every \(u\in\cN_{\rho,\mathbf N}^{\rm br}\), the error
	\(u-u^*\) belongs to the admissible error space, and that the practical error
	loss
	\[
	\cL_{{\rm err}}^{\rm q}(v)
	:=
	\sum_{l=1}^2\|\cG_lv_l\|_{L^2(\Omega_l)}^2
	+
	\int_\Gamma
	\Bigl(|[v]|^2+|\mathcal I_\Gamma(v)|^2\Bigr)\,\mathrm ds
	\]
	satisfies the stability estimate
	\[
	C_L\|v\|_V^2
	\le
	\cL_{{\rm err}}^{\rm q}(v)
	\le
	C_U\|v\|_V^2
	\qquad
	\text{for all admissible }v .
	\]
	Let \(u_\rho\in\cN_{\rho,\mathbf N}^{\rm br}\) be a constrained, regularized,
	or a posteriori coefficient-bounded least-squares solution satisfying
	\[
	\widetilde\cL_{{\rm data},R}^{\rm q}(u_\rho)
	\le
	\inf_{u\in\cN_{\rho,\mathbf N}^{\rm br}}
	\widetilde\cL_{{\rm data},R}^{\rm q}(u)
	+
	\eta_{\rm opt},
	\]
	where the infimum and the consistency supremum in
	Theorem~\ref{thm:quadrature-consistency} are taken over the same fixed
	coefficient-bounded trial space. Then, under
	Assumptions~\ref{assmp:tail-control}, \ref{assmp:quad-accuracy}, and
	\ref{assmp:loss-integrand-regularity},
	\[
	\begin{aligned}
		\|u_\rho-u^*\|_V^2
		\le&
		\frac{C_U}{C_L}
		\inf_{u_a\in\cN_{\rho,\mathbf N}^{\rm br}}
		\|u_a-u^*\|_V^2 \\
		&+
		\frac{2}{C_L}
		\Bigl(
		C_{Q,1}C_{g,1}h_1^{m_q}
		+
		C_{Q,2}(R)C_{g,2}(R)h_2^{m_q}
		+
		C_{Q,\Gamma}C_\Gamma h_\Gamma^{m_q}
		+
		E_{\rm tail}(R)
		\Bigr)
		+
		\frac{1}{C_L}\eta_{\rm opt}.
	\end{aligned}
	\]
\end{corollary}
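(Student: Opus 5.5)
The plan is to repeat the chain of inequalities that produced \eqref{eq:broken-error-decomposition}, with the abstract population loss replaced by the practical data loss $\cL_{\rm data}^{\rm q}$. The abstract gap $S_{\rm br}$ is then replaced by the explicit quadrature-consistency bound of Theorem~\ref{thm:quadrature-consistency}. Write
\[
S_{\rm q}:=\sup_{u\in\cN_{\rho,\mathbf N}^{\rm br}}\bigl|\cL_{\rm data}^{\rm q}(u)-\widetilde\cL_{{\rm data},R}^{\rm q}(u)\bigr|.
\]
Theorem~\ref{thm:quadrature-consistency} bounds $S_{\rm q}$ by the sum $C_{Q,1}C_{g,1}h_1^{m_q}+C_{Q,2}(R)C_{g,2}(R)h_2^{m_q}+C_{Q,\Gamma}C_\Gamma h_\Gamma^{m_q}+E_{\rm tail}(R)$. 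This is legitimate because the supremum there is taken over the same fixed coefficient-bounded trial space that contains $u_\rho$ and all comparison functions $u_a$.

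\textbf{Step 1: data loss equals error loss.} For $u\in\cN_{\rho,\mathbf N}^{\rm br}$ and $e=u-u^*$, linearity of $\cG_l$ together with $\cG_lu_l^*=f_l$ gives $\cG_lu_l-f_l=\cG_le_l$. Linearity of the jump and of $\mathcal I_\Gamma$, together with $[u^*]=0$ and $\mathcal I_\Gamma(u^*)=0$, gives $[u]=[e]$ and $\mathcal I_\Gamma(u)=\mathcal I_\Gamma(e)$. Hence $\cL_{\rm data}^{\rm q}(u)=\cL_{\rm err}^{\rm q}(e)$. Since $e$ is admissible by hypothesis, the assumed stability estimate applies to it.

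\textbf{Step 2: the inequality chain.} Fix $u_a\in\cN_{\rho,\mathbf N}^{\rm br}$ and set $e_\rho=u_\rho-u^*$. Then
\[
\begin{aligned}
C_L\|e_\rho\|_V^2
&\le \cL_{\rm err}^{\rm q}(e_\rho)=\cL_{\rm data}^{\rm q}(u_\rho)
\le \widetilde\cL_{{\rm data},R}^{\rm q}(u_\rho)+S_{\rm q}\\
&\le \widetilde\cL_{{\rm data},R}^{\rm q}(u_a)+\eta_{\rm opt}+S_{\rm q}
\le \cL_{\rm data}^{\rm q}(u_a)+\eta_{\rm opt}+2S_{\rm q}\\
&= \cL_{\rm err}^{\rm q}(u_a-u^*)+\eta_{\rm opt}+2S_{\rm q}
\le C_U\|u_a-u^*\|_V^2+\eta_{\rm opt}+2S_{\rm q}.
\end{aligned}
\]
The third inequality uses the near-optimality of $u_\rho$, since $\inf_u\widetilde\cL_{{\rm data},R}^{\rm q}(u)\le\widetilde\cL_{{\rm data},R}^{\rm q}(u_a)$.

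\textbf{Step 3: conclusion.} Divide by $C_L$, take the infimum over $u_a$, and insert the bound on $S_{\rm q}$ from Theorem~\ref{thm:quadrature-consistency}. This yields the stated estimate, including the factor $2/C_L$ on the quadrature and tail terms.

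There is no serious obstacle. The only point requiring care is to apply the consistency supremum on exactly the trial space where $u_\rho$ and $u_a$ live, and to invoke the stability estimate only on admissible errors. Both are guaranteed by the hypotheses of the corollary.
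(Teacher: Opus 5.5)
Your proposal is correct and is essentially the paper's own argument. The paper likewise uses the identity $\cL_{\rm data}^{\rm q}(u)=\cL_{\rm err}^{\rm q}(u-u^*)$, which follows from linearity and the exact-solution conditions. It then reruns the least-squares chain behind \eqref{eq:broken-error-decomposition} with the practical loss in place of $\cL_{\rm br}$, and inserts the bound of Theorem~\ref{thm:quadrature-consistency} for the consistency gap. Your write-up makes that chain explicit, including the use of admissibility for both $u_\rho-u^*$ and $u_a-u^*$.
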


The stability assumption in Corollary~\ref{cor:ls-quadrature-error} is a
stability assumption for the practical loss itself. It should not be read as an
automatic consequence of a stability theory formulated with stronger trace
norms or negative Sobolev residual norms. For example, pointwise or
\(L^2(\Gamma)\)-type interface penalties do not, by themselves, provide
uniform control of strong trace quantities such as
\(H^{3/2}(\Gamma)\), \(H^{1/2}(\Gamma)\), or
\(H^{-1/2}(\Gamma)\) flux norms. Similarly, a collocation residual should not
be interpreted as a direct discretization of
\(H^{-1}(\Omega_1)\times H^{-1}(\Omega_2)\) unless a corresponding
duality-based, spectral, or mesh-dependent realization is specified.

Thus the preceding theorem and corollary establish a deterministic consistency
and error estimate for the computable \(L^2\)-type practical broken loss used
in the coefficient-bounded least-squares analysis. If one wishes to work with
an abstract broken \(H^2\) or broken \(H^1\) graph norm involving stronger
trace, flux, or negative Sobolev terms, an additional surrogate-stability,
finite-dimensional inverse, or sampling estimate is required to connect that
abstract stability theory to the practical empirical loss. The conclusion is
therefore conditional but precise: once the practical loss is stable on the
admissible error space and the empirical consistency gap is controlled on the
same coefficient-bounded trial space, the least-squares DD--RaNN solution satisfies
the broken-space error estimate. The approximation term is controlled by
Theorem~\ref{Thm:H1bound-UAT} when the stability norm \(V\) coincides with the
corresponding broken \(H^s\) norm; for stronger graph norms involving
additional trace or flux quantities, approximation in that stronger \(V\)-norm
must be imposed separately.

		%%%%%%%%%%%%%%%%%%%%%%%%%%%%%
	\section{Numerical Examples}
	\label{sec:num}

In this section, we test the proposed domain-decomposed RaNN methods on several representative unbounded-domain problems. Unless otherwise specified, the degrees of freedom are defined as the total number of randomized features over all subnetworks. Some numerical tests use activations such as \(\tanh\),  which have finite one-dimensional limits at infinity but do not decay to zero. These experiments are intended to assess the practical performance and robustness of the DD-RaNN construction, and should not be interpreted as direct instances of the decaying-activation approximation theorem proved in Section~\ref{sec:analysis}.

When considering the \(L^2\) and \(H^1\) errors, the error calculation is performed using the Laguerre-Gaussian quadrature formula and the Hermite-Gaussian quadrature formula for the half-unbounded and unbounded domains. When \(H^1\) errors are reported for domain-decomposed solutions, they should be understood as broken \(H^1\) errors, computed by summing the subdomain \(H^1\) contributions. Interface mismatches are controlled through the least-squares interface equations and are not included in the broken \(H^1\) norm unless stated otherwise.

	In some cases, we use the mapped error which is calculated as follows:  
	For the bounded domain \([-1,1]\), the Legendre-Gauss quadrature formula is given by:
	\[
	\int_{-1}^{1} f(x) \, \mathrm{d}x \approx \sum_{k=0}^{n} f(x_k)w_k.
	\]
	Set $x=\dfrac{s\,y}{\sqrt{1-y^2}}$ so that $\dfrac{\mathrm{d}x}{\mathrm{d}y}=\dfrac{s}{(1-y^2)^{3/2}}$, where $ s $ is a scaling parameter, often set to $1$. Then
	\[
	\int_{-\infty}^{+\infty} f(x)\,\mathrm{d}x
	=\int_{-1}^{1} f\!\left(\frac{s\,y}{\sqrt{1-y^2}}\right)\frac{s}{(1-y^2)^{3/2}}\,\mathrm{d}y.
	\]
	In 2D,
	\begin{equation}
			\iint_{\mathbb{R}^2} f(x,y)\,\mathrm{d}x\,\mathrm{d}y
		\approx\sum_{j,k} f\!\left(\frac{s\,p_k}{\sqrt{1-p_k^2}},\frac{s\,q_j}{\sqrt{1-q_j^2}}\right)
		\frac{s^2}{(1-p_k^2)^{3/2}(1-q_j^2)^{3/2}}\,w_jw_k,
		\label{mapped-error}
	\end{equation}
	where $p_k,\ q_j$ are Legendre-Gauss points, and $ w_k,\ w_j $ are the corresponding Gaussian weights.

	The use of mapped error is due to the fact that, in certain cases, with the same number of Gaussian points, the point distribution resulting from the mapping technique can cover a larger area, which can demonstrate the accuracy of the solution of RaNNs. A similar integral computation scheme of $L^2$ error and $H^1$ error can be given for higher dimensions. Thus, taking \( \mathbb{R}^2 \) as an example, we have two ways to compute the \( L^2 \) and \(H^1\) errors, respectively:
	\paragraph{Mapped Legendre on $\mathbb{R}^2$.}
	Let $e= u_h-u$. With the mapping in \eqref{mapped-error},
	\[
	\|e\|_{L^2(\mathbb{R}^2)}^2 \approx 
	\sum_{j,k} e\!\left(\frac{s\,p_k}{\sqrt{1-p_k^2}},\frac{s\,q_j}{\sqrt{1-q_j^2}}\right)^{\!2}
	\frac{s^2}{(1-p_k^2)^{3/2}(1-q_j^2)^{3/2}}\,w_kw_j,
	\]
	and analogously for $\|e\|_{H^1}$ with $|e|^2+ |\nabla e|^2$.
	\paragraph{Hermite--Gauss on $\mathbb{R}^2$.}
	Let $e= u_h-u$. Then
	\[
	\|e\|_{L^2(\mathbb{R}^2)}^2 \approx \sum_{j,k} |e(p_k,q_j)|^2\,w_kw_j,\qquad
	\|e\|_{H^1(\mathbb{R}^2)}^2 \approx \sum_{j,k} \bigl(|e|^2+|\nabla e|^2\bigr)(p_k,q_j)\,w_kw_j,
	\]
	where $p_k,\ q_j$ are Hermite-Gauss points, and $ w_k,\ w_j $ are the corresponding Gaussian weights, where the compensation for the Hermite weight function $e^{-x^2}$ has already been incorporated.
	
	The computation method depends on the size of the coverage interval of the current sampling points. When there is a significant difference between the two computed results, we adopt the method with a larger sampling range to compute the error, which is usually the method that yields a larger error. This allows for a better assessment of the computational results in the unbounded domain.
	
	When computing errors on a semi-unbounded domain, similarly, we use the Laguerre-Gaussian quadrature formula:
		\paragraph{Laguerre--Gauss on $\mathbb{R}^2_{+}$.}
		Let $e= u_h-u$. Then
	\begin{equation}
		\|e\|_{L^2(\mathbb{R}^2_{+})}^2 \approx \sum_{j,k} e(p_k,q_j)^2\,w_kw_j,\qquad
		\|e\|_{H^1(\mathbb{R}^2_{+})}^2 \approx \sum_{j,k} \bigl(e^2+|\nabla e|^2\bigr)(p_k,q_j)\,w_kw_j,
	\end{equation}
	where $p_k,\ q_j$ are Laguerre-Gaussian points, and $ w_k,\ w_j $ are the corresponding Gaussian weights, where the compensation for the Laguerre weight function $e^{-x}$ has already been incorporated.

	\begin{example}
		In this example, we consider the equation $(\ref{eq:half})$ with $\Omega=[0,+\infty)$, $\beta=1$ and $\gamma=1$. The exact solution is 
		\begin{equation}
			u(x)=\dfrac{1}{x+1},
		\end{equation}
		and the source term $f$ can be set by calculation. The Dirichlet boundary condition is given as
		\begin{equation}
			u(0)=1.
		\end{equation}
		\label{eg:1d-half}
	\end{example}

	This example is used to test whether the proposed near--far DD-RaNN-PG decomposition can approximate algebraically decaying solutions on a semi-unbounded interval. It also provides a comparison with a classical Laguerre spectral Galerkin discretization using a comparable number of degrees of freedom.

	In Example~\ref{eg:1d-half}, we use the DD-RaNN-PG method with two single-hidden-layer RaNNs. 
	For a single-hidden-layer RaNN, the weight matrix $\bW$ and the bias matrix $\bb$ are sampled from a uniform distribution $U(-r,r)$. The activation function is \(\rho(x)=\tanh(x)\), and the near--far interface $x_{\mathrm{cut}}$ is 45 and scaling parameter $S$ is 10. The activation \(\tanh\) does not satisfy the decaying activation assumption used in the conditional approximation theorem of Section~\ref{sec:analysis}. Hence the following computation should be interpreted as a numerical demonstration of the DD-RaNN strategy, rather than as a direct instance covered by that theorem.
	
	For the near-field subnetwork and the far-field subnetwork, the parameter \(r_1 = 0.7\) and \(r_2 = 0.2\). The number of neurons \(m_1\) and \(m_2\) vary from 10 to 200. Unless otherwise noted, we define the degrees of freedom (DoF) as $m_1+m_2$. The number of points used to construct the test function is the same as the number of neurons. The errors shown in Table~\ref{tab:1d-half-errors} and Figure~\ref{fig:1d-half} are directly computed using the Laguerre-Gaussian quadrature formula with 180 points, and we use 5 quadrature points per element.
	\begin{figure}[H]
		\centering
		\begin{minipage}{0.45\linewidth}
			\centering
			\includegraphics[width=\textwidth]{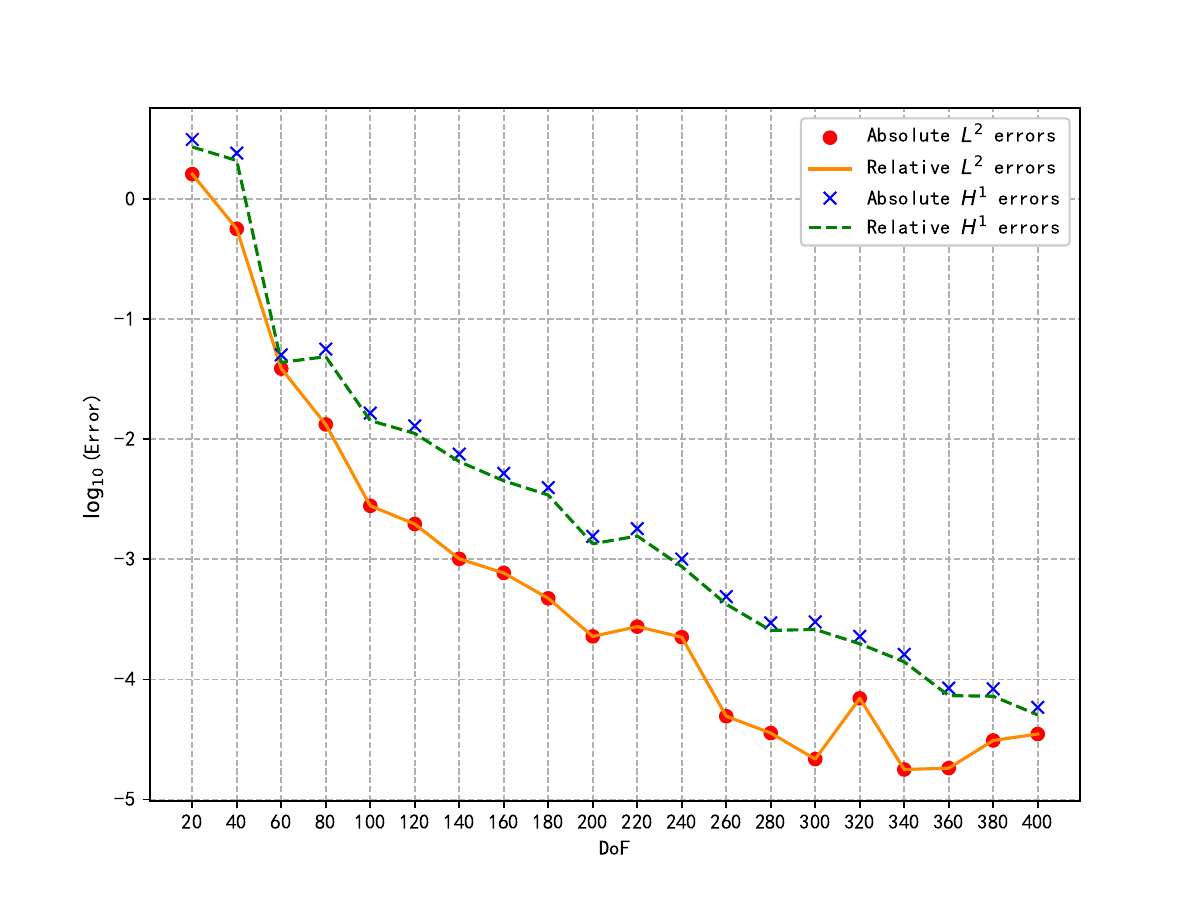}
		\end{minipage}
		\hspace{0.05\linewidth} 
		\begin{minipage}{0.45\linewidth}
			\centering
			\includegraphics[width=\textwidth]{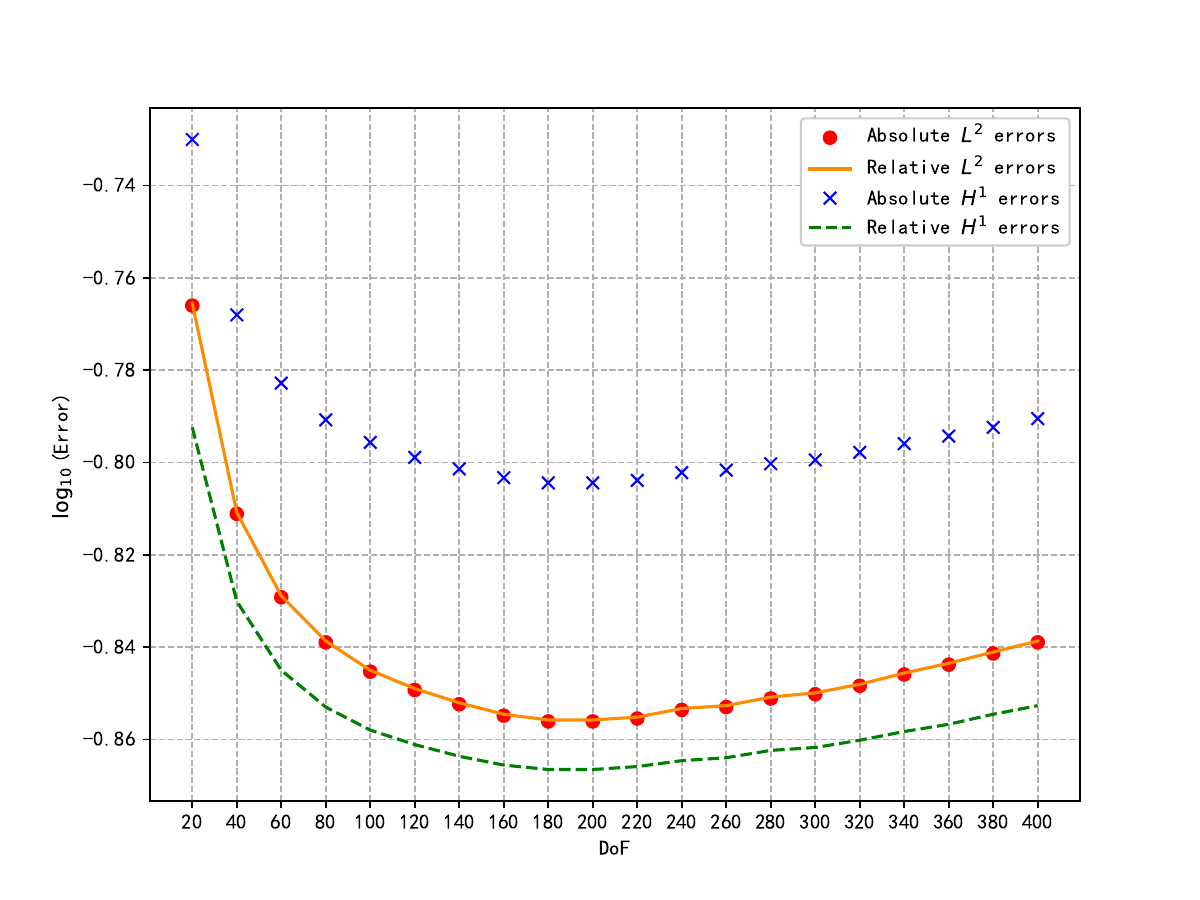}
		\end{minipage}
		\caption{Logarithmic errors of the DD-RaNN–PG (left) and the Laguerre-Spectral-Galerkin method (right) for Example~\ref{eg:1d-half}.}
		\label{fig:1d-half}
	\end{figure}

	As shown in Figure~\ref{fig:1d-half}, when $m_1$ and $m_2$ increase simultaneously, the DD-RaNN-PG error decreases significantly. Eventually, all four types of errors stabilize at the order of $10^{-5}$.
	
	On the other hand, the Laguerre-Spectral-Galerkin method, which is naturally constructed to satisfy the boundary conditions, is formulated as 
	\[
	u = \hat{L}_1 + \sum_{k=1}^{\text{DoF}} \tilde{u}_k (\hat{L}_{k+1} - \hat{L}_k),
	\]
	where $\hat{L}_k$ represents the $k$-th order Laguerre function. By matching the number of basis functions to that of DD-RaNN-PG and setting the number of Gauss points for calculating the global error to 180, the results are shown in the right panel of Figure \ref{fig:1d-half}. It can be observed that as the DoF increases, the convergence rate of the spectral method slows down gradually, and eventually, all four types of errors stabilize at the order of $10^{-1}$.

	\begin{table}[H]
		\centering
		\caption{Relative $L^2$ and $H^1$ errors for Example~\ref{eg:1d-half} using DD-RaNN-PG and Laguerre-Spectral-Galerkin methods}
		\label{tab:1d-half-errors}
		\begin{tabular}{ccccc}
			\hline
			& \multicolumn{2}{c}{DD-RaNN-PG (2 networks)} & \multicolumn{2}{c}{Laguerre-Spectral-Galerkin} \\ \cline{2-5} 
			$DoF$ & Relative $L^2$ Error      & Relative $H^1$ Error     & Relative $L^2$ Error         & Relative $H^1$ Error        \\ \hline
			20  & $1.6157$ & $2.7195$ & $1.7160\times10^{-1}$ & $1.6130\times10^{-1}$ \\
			80  & $1.3286\times10^{-2}$ & $4.8716\times10^{-2}$ & $1.4500\times10^{-1}$ & $1.4030\times10^{-1}$ \\
			140 & $1.0085\times10^{-3}$ & $6.5099\times10^{-3}$ & $1.4060\times10^{-1}$ & $1.3690\times10^{-1}$ \\
			200 & $2.2786\times10^{-4}$ & $1.3465\times10^{-3}$ & $1.3940\times10^{-1}$ & $1.3600\times10^{-1}$ \\
			260 & $4.9253\times10^{-5}$ & $4.2275\times10^{-4}$ & $1.4040\times10^{-1}$ & $1.3680\times10^{-1}$ \\
			320 & $6.9469\times10^{-5}$ & $1.9778\times10^{-4}$ & $1.4190\times10^{-1}$ & $1.3800\times10^{-1}$ \\
			380 & $3.0954\times10^{-5}$ & $7.2287\times10^{-5}$ & $1.4420\times10^{-1}$ & $1.3980\times10^{-1}$ \\
			\hline
		\end{tabular}
	\end{table}

	From Table~\ref{tab:1d-half-errors}, it can be further observed that for Example~\ref{eg:1d-half}, increasing the number of basis functions leads to a slow decrease in the spectral method's error, and its accuracy remains relatively poor. In contrast, the DD-RaNN-PG achieves better accuracy as the number of neurons increases.

	\begin{example}
		In this example, we consider the equation $(\ref{eq:half})$ with $\beta=1$ and $\gamma=1$. The domain $\Omega$ is defined as:
		\begin{equation}
			\begin{aligned}
				&\Omega=\Omega_{D1}\setminus (\Omega_{D2}\cup\Omega_N),\\
				&\Omega_{D1}=\{(x,y);\ x\ge0,\ y\ge0\},\ \Gamma_{D1}=\partial \Omega_{D1},\\
				&\Omega_{D2}=\left\{(x,y);\ \frac{(x+y-9)^2}{9}+(y-x+1)^2\le 2 \right\},\ \Gamma_{D2}=\partial \Omega_{D2},\\
				&\Omega_N=\{(x,y);\ (x-1)^2+(y-1)^2\le 1\},\ \Gamma_N=\partial \Omega_N.
			\end{aligned}
		\end{equation}
		The exact solution is 
		\begin{equation}
			u(x,y)=\exp\left(-\frac{x}{2}-\frac{y}{2}\right)\sin(x+y),
		\end{equation}
		and the source term $f$ can be set by calculation. The boundary conditions are given as
		\begin{equation}
			\begin{cases}
				u(\bx)=g_1(\bx),\qquad \bx\in \Gamma_D=\Gamma_{D1}\cup\Gamma_{D2},\\
				\nabla u(\bx)\cdot \bn=g_2(\bx),\qquad \bx\in \Gamma_N,\\
			\end{cases}
		\end{equation}
		where $g_1$ and $g_2$ can be set by calculation.
		\label{eg:2d-half}
	\end{example}

	This example is designed to examine the geometric flexibility of the proposed DD-RaNN-CM on a semi-unbounded domain with both interior holes and mixed boundary conditions. The purpose is to test whether the domain-decomposed randomized trial space can handle a near-field region with complicated geometry together with an exterior unbounded region.

	In Example~\ref{eg:2d-half}, we use the DD-RaNN-CM with two single-hidden-layer RaNNs. For a single-hidden-layer RaNN, the weight matrix $\bW$ is sampled from the uniform distribution $U(-r,r)$ and the bias matrix $\bb$ is constructed by the second strategy:
	\[
	\bb =(- \bW \odot \mathbf{R})\mathbf{1}, \quad \mathbf{R} \sim U(-1, 1),
	\]
	where \(\mathbf{R}\) is a sequence of numbers sampled from the uniform distribution \(U(-1, 1)\) entry-wise, and \(\odot\) represents the element-wise multiplication, and $\mathbf{1}$ is the all-ones matrix. The activation function is \(\rho(x,y)=e^{-x^2-y^2}\).
	
	In this example, the near--far interface is a quarter-circle boundary with a radius of 18 centered at the origin. For the near-field subnetwork (NFs), the parameter \(r_1 = 0.5\), and the number of neurons \(m_1\) varies from 50 to 450. For the far-field subnetwork, the parameter \(r_2 = 1\), and the number of neurons \(m_2 = 50\). Unless otherwise noted, ``DoF'' is defined as $m_1+m_2$. Each boundary is sampled with 500 points, and the interior of both networks is sampled with 4000 points. When assembling the linear equations, the penalty parameters $\xi  $ and $ \eta $ are set to 10.

	\begin{figure}[H]
		\centering
		\begin{minipage}{0.45\linewidth}
			\centering
			\includegraphics[width=\textwidth]{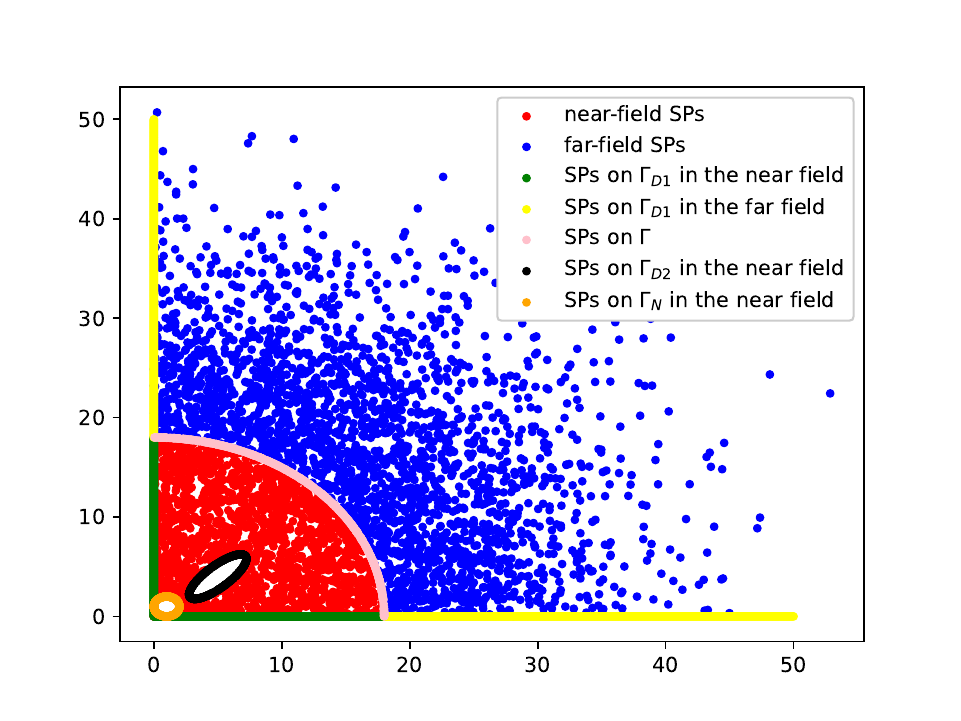}
			\label{fig:2d-half-sample}
		\end{minipage}
		\hspace{0.05\linewidth} 
		\begin{minipage}{0.45\linewidth}
			\centering
			\includegraphics[width=\textwidth]{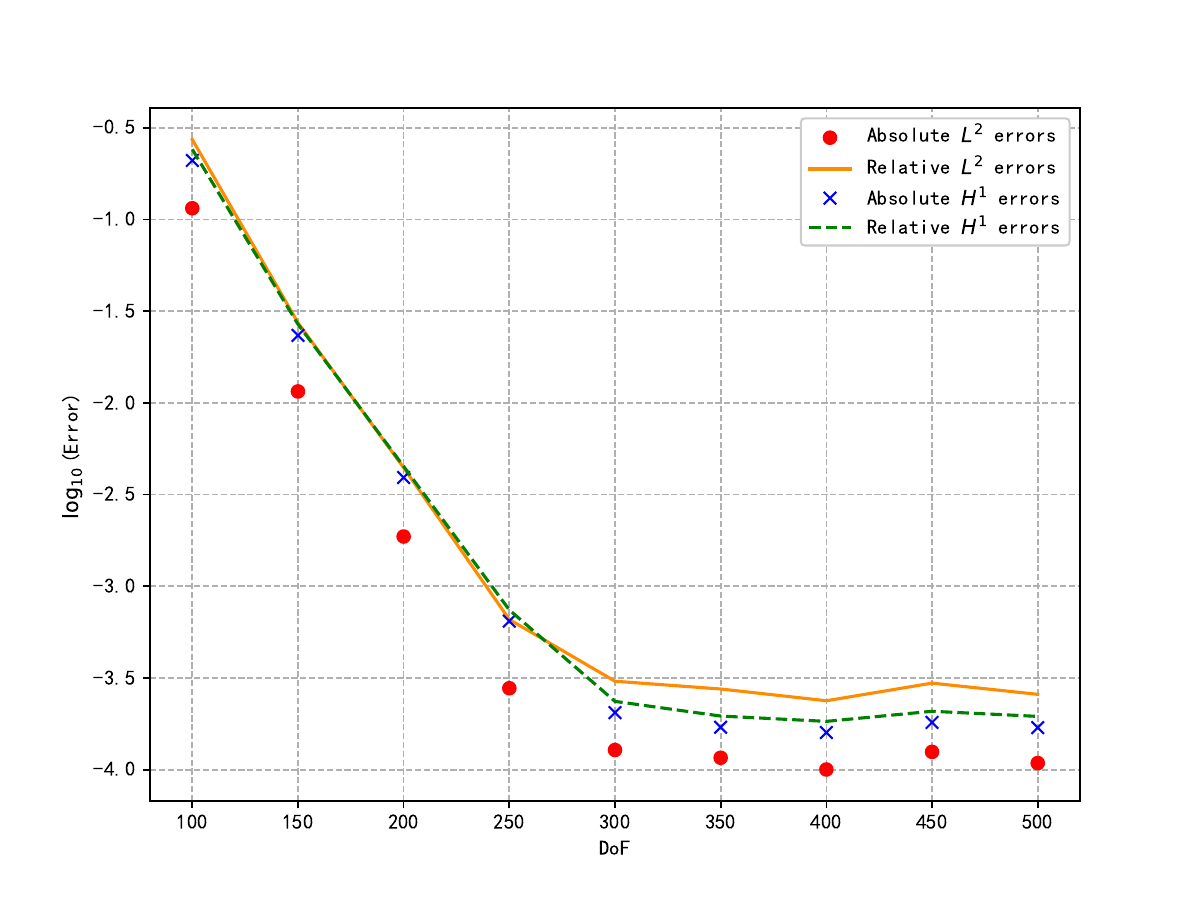}
			\label{fig:2d-half-error}
		\end{minipage}
		\caption{Scatter plot of sampling points (SPs) in DD-RaNN-CM (left) and logarithmic errors for DD-RaNN-CM (right) for Example~\ref{eg:2d-half}.}
		\label{fig:2d-half}
	\end{figure}	
	
	The left figure in Figure~\ref{fig:2d-half} illustrates the point distribution from one of the experiments. The interior points are sampled in polar coordinates. The radial distance is sampled from the normal distribution \(\mathcal{N}(20, 10)\), and the angle is sampled from the uniform distribution \(U(0, \pi/2)\). If the radius is negative, its absolute value is taken. Each sampled point is checked to determine whether it falls within an interior hole; if so, the point is resampled.

	\begin{figure}[H]
		\centering
		\includegraphics[width=1\textwidth]{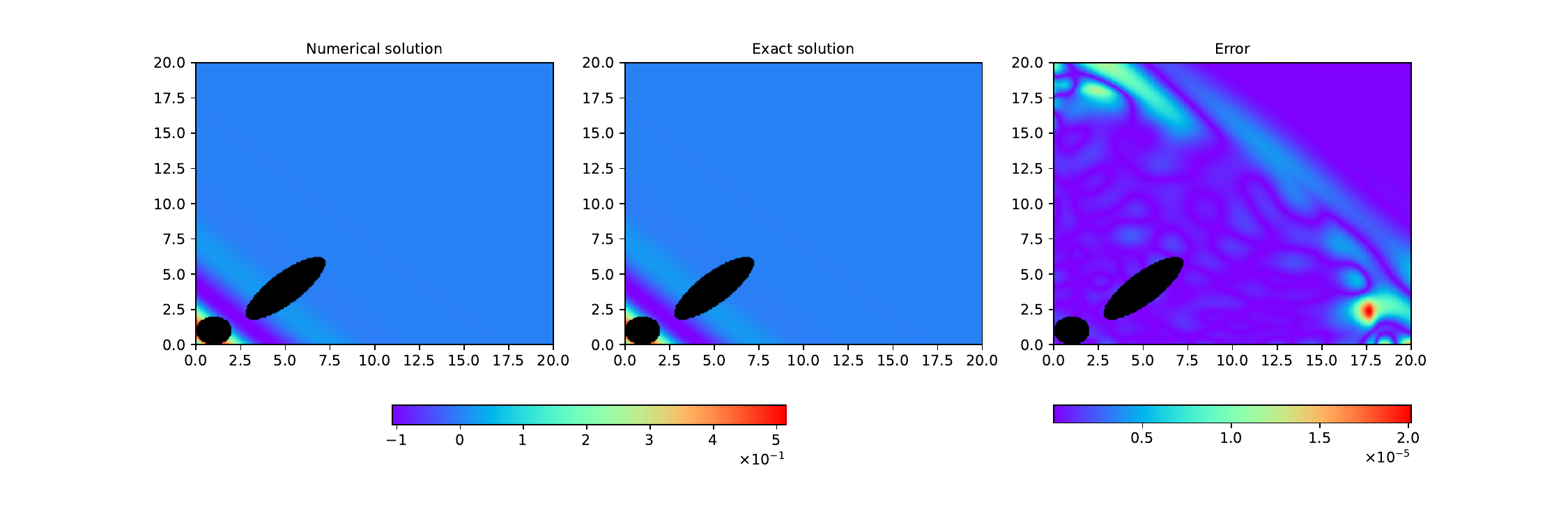}
		\caption{Computational solution obtained using the DD-RaNN-CM (left), exact solution (middle), and absolute error plot (right) for $m_1 = 450$ in Example~\ref{eg:2d-half}.}
		\label{fig:2d-half-sol}
	\end{figure}

	The elliptical boundary and circular boundary are also sampled in polar coordinates, with angles sampled from the uniform distribution \(U(0, 2\pi)\). For the quarter-circle boundary, angles are sampled from the uniform distribution \(U(0, \pi/2)\). On the boundaries along the coordinate axes, points are uniformly distributed within \([0, 50]\).
	
	The errors presented in Table~\ref{tab:2d-half-errors} and Figure~\ref{fig:2d-half} are the averages obtained from 30 experiments for each parameter, while Figure~\ref{fig:2d-half-sol} shows the results of a single experiment. The error for each experiment is directly computed using the Laguerre-Gaussian quadrature formula with 50 points. We can observe that as the number of neurons in RaNN1 increases, the error gradually decreases and eventually stabilizes at the order of \(10^{-4}\).

	\begin{table}[H]
		\centering
	\caption{$L^2$ error, relative $L^2$ error, $H^1$ error, and relative $H^1$ error for different numbers of neurons in Example~\ref{eg:2d-half}.}
		\label{tab:2d-half-errors}
		\begin{tabular}{c|cccc}
			\toprule
			\textbf{$ DoF $} & $L^2$ Error & Relative $L^2$ Error & $H^1$ Error & Relative $H^1$ Error \\
			\midrule
			100 & $1.1508\times10^{-1}$ & $2.7280\times10^{-1}$ & $2.0963\times10^{-1}$ & $2.4116\times10^{-1}$ \\
			150 & $1.1544\times10^{-2}$ & $2.7365\times10^{-2}$ & $2.3338\times10^{-2}$ & $2.6847\times10^{-2}$ \\
			200 & $1.8685\times10^{-3}$ & $4.4292\times10^{-3}$ & $3.9191\times10^{-3}$ & $4.5084\times10^{-3}$ \\
			250 & $2.7824\times10^{-4}$ & $6.5956\times10^{-4}$ & $6.4622\times10^{-4}$ & $7.4340\times10^{-4}$ \\
			300 & $1.2818\times10^{-4}$ & $3.0384\times10^{-4}$ & $2.0511\times10^{-4}$ & $2.3596\times10^{-4}$ \\
			350 & $1.1627\times10^{-4}$ & $2.7563\times10^{-4}$ & $1.7061\times10^{-4}$ & $1.9627\times10^{-4}$ \\
			400 & $1.0034\times10^{-4}$ & $2.3786\times10^{-4}$ & $1.5957\times10^{-4}$ & $1.8356\times10^{-4}$ \\
			450 & $1.2516\times10^{-4}$ & $2.9670\times10^{-4}$ & $1.8114\times10^{-4}$ & $2.0838\times10^{-4}$ \\
			500 & $1.0873\times10^{-4}$ & $2.5775\times10^{-4}$ & $1.6988\times10^{-4}$ & $1.9542\times10^{-4}$ \\
			\bottomrule
		\end{tabular}
	\end{table}

	\begin{example}
		In this example, we consider the equation $(\ref{eq:GF})$ with $\beta=1$ and $\gamma=1$. The domain $\Omega$ is set as:
		\begin{equation}
			\begin{aligned}
				&\Omega=\mathbb{R}^2\setminus(\Omega_{D}\cup\Omega_N),\\
				&\Omega_{D}=\left\{(x,y);\frac{(x+y-9)^2}{9}+(y-x+1)^2\le 2 \right\},\ &\Gamma_{D}=\partial \Omega_{D},\\
				&\Omega_N=\left\{(x,y);(x+2)^2+(y+2)^2\le 4\right\},\ \Gamma_N=\partial \Omega_N.
			\end{aligned}
		\end{equation}
		The exact solution is 
		\begin{equation}
			u(x,y)=\exp\left(-\dfrac{x^2}{10}-\dfrac{y^2}{10}\right)\sin(x+y)\sin(x-y),
		\end{equation}
		and the source term $f$ can be set by calculation. The boundary conditions are given as
		\begin{equation}
			\begin{cases}
				u(\bx)=g_1(\bx),\qquad \bx\in \Gamma_D,\\
				\nabla u(\bx)\cdot \bn=g_2(\bx),\qquad \bx\in \Gamma_N,\\
			\end{cases}
		\end{equation}
		where $g_1$ and $g_2$ can be set by calculation.
		\label{eg:2d-whole}
	\end{example}

	This example tests the fully unbounded two-dimensional setting. In contrast to the semi-unbounded case, there is no physical outer boundary. The exterior network is therefore used to represent the far-field behavior beyond the near-field region, and the interface equations couple the two randomized representations.

	In Example~\ref{eg:2d-whole}, we use the DD-RaNN-CM with two single-hidden-layer RaNNs. For a single-hidden-layer RaNN, the weight matrix $\bW$ is sampled from a uniform distribution $U(-r,r)$ and the bias matrix $\bb$ is constructed by the second strategy. The activation function is \(\rho(x,y)=e^{-x^2-y^2}\).
	
	In this example, the near--far interface is a circle boundary with a radius of 9 centered at the origin. For the near-field subnetwork, the parameter \(r_1 = 0.5\), and the number of neurons \(m_1\) varies from 50 to 450. For the far-field subnetwork, the parameter \(r_2 = 10\), and the number of neurons \(m_2 = 50\). Unless otherwise noted, ``DoF'' is defined as $m_1+m_2$. Each boundary is sampled with 500 points, and the interior of both networks is sampled with 4000 points. When assembling the linear equations, the penalty parameter $\xi  $ and $ \eta $ are set to 10.
	
		\begin{figure}[H]
		\centering
		\begin{minipage}{0.45\linewidth}
			\centering
			\includegraphics[width=\textwidth]{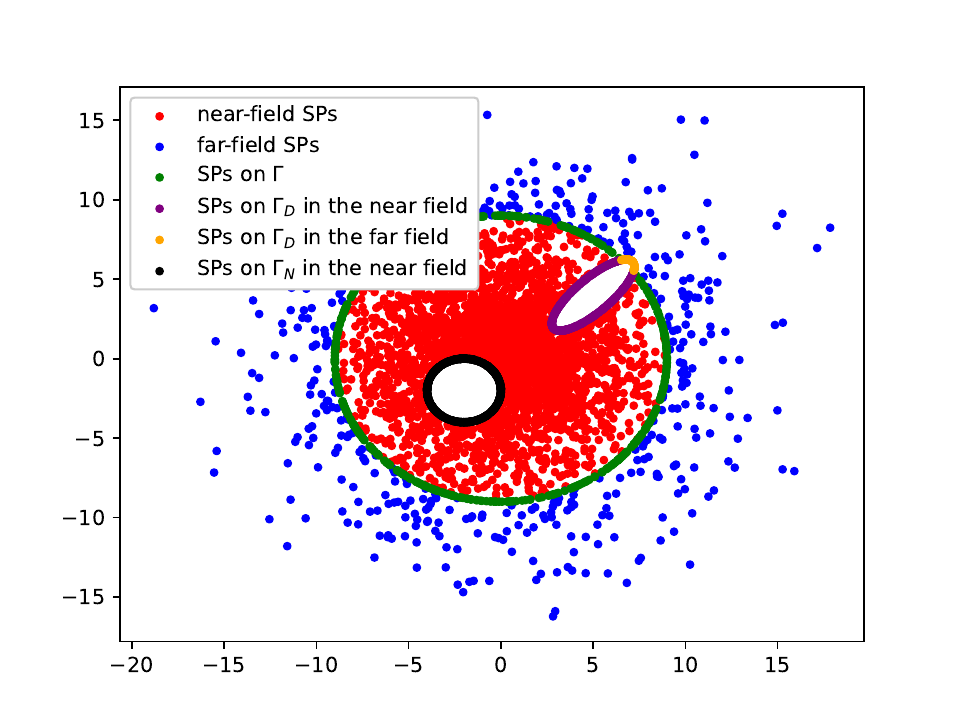}
			\label{fig:2d-whole-sample}
		\end{minipage}
		\hspace{0.05\linewidth} 
		\begin{minipage}{0.45\linewidth}
			\centering
			\includegraphics[width=\textwidth]{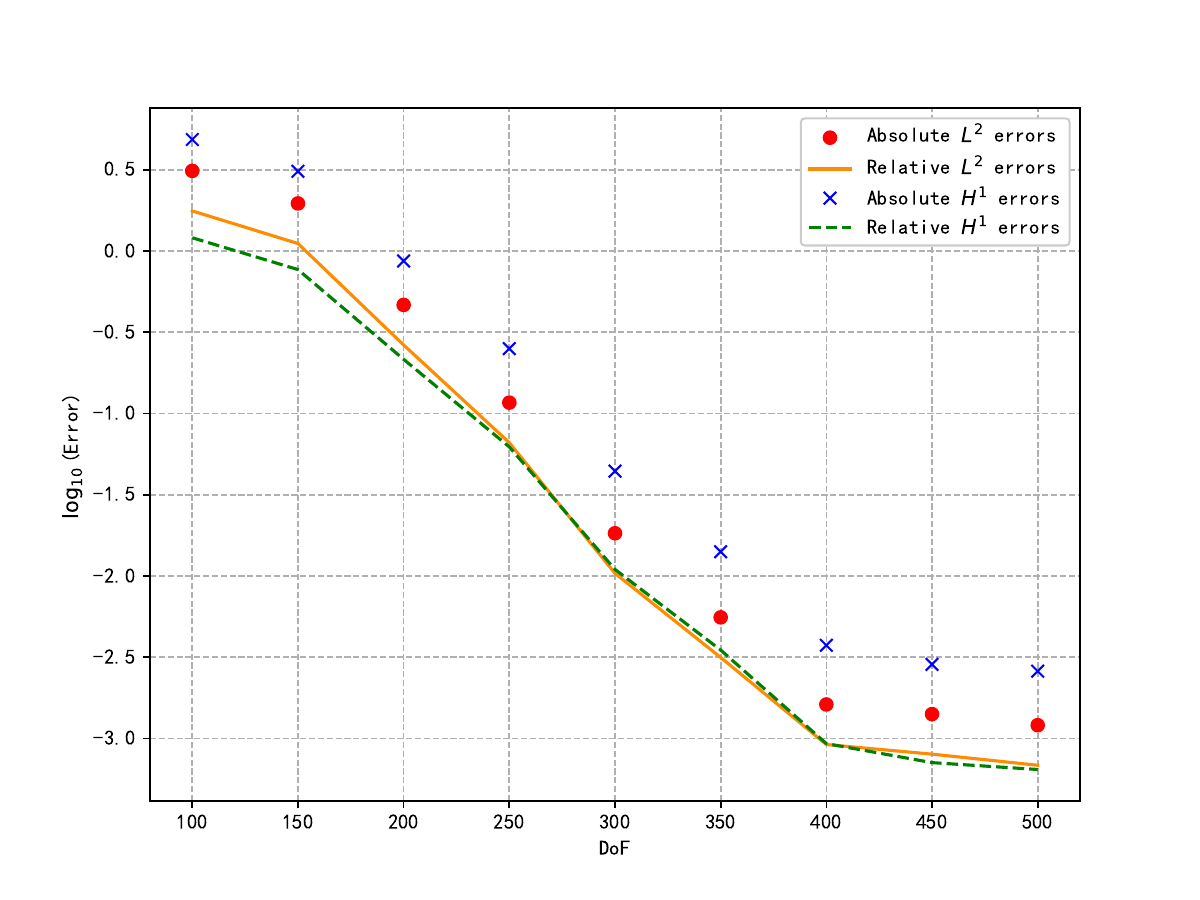}
			\label{fig:2d-whole-error}
		\end{minipage}
		\caption{Scatter plot of sampling points (SPs) for DD-RaNN-CM (left) and the logarithmic errors for DD-RaNN-CM in Example~\ref{eg:2d-whole}.}
		\label{fig:2d-whole}
	\end{figure}

	The left panel in Figure~\ref{fig:2d-whole} illustrates the point distribution from one of the experiments. The interior points are sampled in polar coordinates. The radial distance is sampled from the normal distribution \(\mathcal{N}(2, 5)\), and the angle is sampled from the uniform distribution \(U(0, 2\pi)\). If the radius is negative, its absolute value is taken. Each sampled point is checked to determine whether it falls within a interior
hole; if so, the point is resampled.
	
	The elliptical boundary, circular boundary and near--far interface are also sampled in polar coordinates, with angles sampled from the uniform distribution \(U(0, 2\pi)\).

	\begin{figure}[H]
		\centering
		\includegraphics[width=\textwidth]{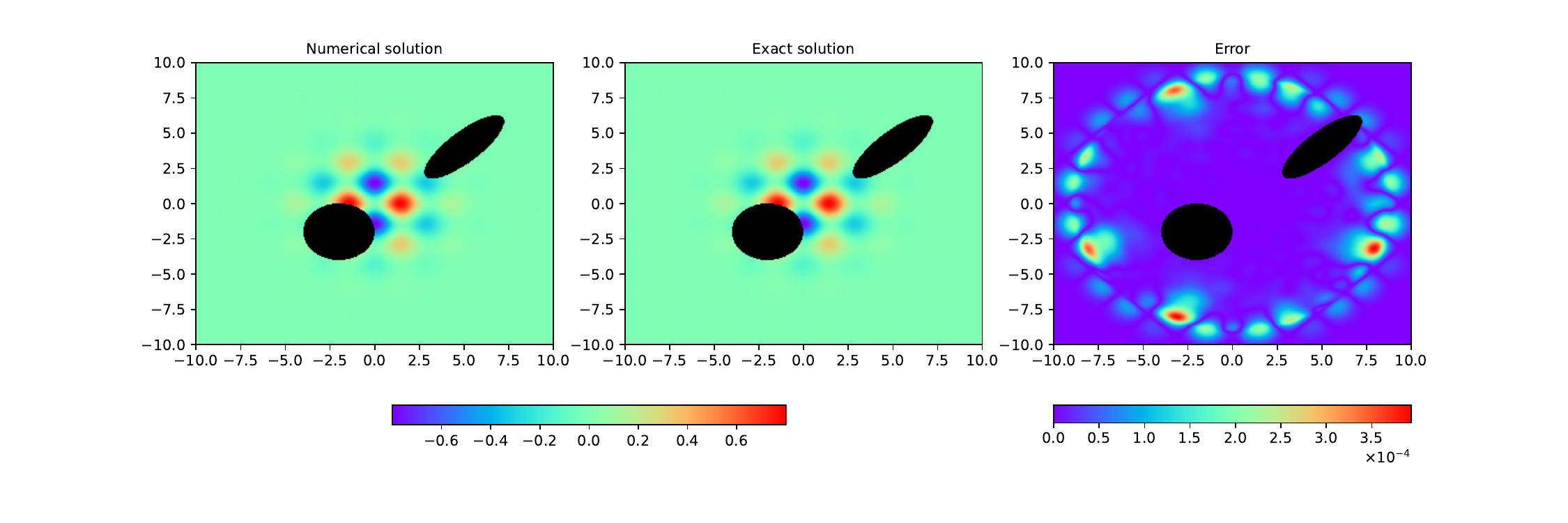}
		% \label{fig:1+1_sample}
		\caption{Computational solution obtained using the DD-RaNN-CM (left), exact solution (middle), and absolute error plot (right) for $m_1 = 450$ in Example~\ref{eg:2d-whole}.}
		\label{fig:2d-whole-sol}
	\end{figure}

	The errors shown in Table~\ref{tab:2d-whole-errors} and Figure~\ref{fig:2d-whole} are the averages obtained from 5 experiments for each parameter, while Figure~\ref{fig:2d-whole-sol} shows the results of a single experiment. The error for each experiment is computed using (\ref{mapped-error}) with 50 points. We can observe that as the number of neurons in the near-field subnetwork increases, the error gradually decreases and eventually stabilizes around the order of \(10^{-4}\).
	
	\begin{table}[H]
		\centering
		\caption{$L^2$ error, relative $L^2$ error, $H^1$ error, and relative $H^1$ error for different numbers of neurons in Example~\ref{eg:2d-whole}.}
		\label{tab:2d-whole-errors}
		\begin{tabular}{c|c c c c}
			\toprule
			\textbf{$ DoF $} & $L^2$ Error & Relative $L^2$ Error & $H^1$ Error & Relative $H^1$ Error \\
			\midrule
			150  & 1.9628 & 1.1126 & 3.0959 & $7.6921 \times 10^{-1}$ \\
			200  & $4.6616 \times 10^{-1}$ & $2.6425 \times 10^{-1}$ & $8.6752 \times 10^{-1}$ & $2.1555 \times 10^{-1}$ \\
			250  & $1.1675 \times 10^{-1}$ & $6.6178 \times 10^{-2}$ & $2.5078 \times 10^{-1}$ & $6.2309 \times 10^{-2}$ \\
			300  & $1.8335 \times 10^{-2}$ & $1.0393 \times 10^{-2}$ & $4.4201 \times 10^{-2}$ & $1.0982 \times 10^{-2}$ \\
			350  & $5.5675 \times 10^{-3}$ & $3.1560 \times 10^{-3}$ & $1.4104 \times 10^{-2}$ & $3.5042 \times 10^{-3}$ \\
			400  & $1.6209 \times 10^{-3}$ & $9.1883 \times 10^{-4}$ & $3.7460 \times 10^{-3}$ & $9.3074 \times 10^{-4}$ \\
			450  & $1.4142 \times 10^{-3}$ & $8.0165 \times 10^{-4}$ & $2.8613 \times 10^{-3}$ & $7.1091 \times 10^{-4}$ \\
			500  & $1.2081 \times 10^{-3}$ & $6.8481 \times 10^{-4}$ & $2.5960 \times 10^{-3}$ & $6.4499 \times 10^{-4}$ \\
			\bottomrule
		\end{tabular}
	\end{table}

	\begin{example}
		
		In this example, we consider the equation $(\ref{eq:GF})$ with $\Omega=\R^3$, $\beta=1$ and $\gamma=1$. The exact solution is 
		\begin{equation}
			u(\bx)=e^{-\|\bx\|_2^2}\sin(x_1+x_2+x_3),\quad \bx=(x_1,x_2,x_3),
		\end{equation}
		and the source term $f$ can be set by calculation. 
		\label{eg:3d-whole}
	\end{example}

	This example is used to evaluate the scalability of the proposed collocation framework in a three-dimensional whole-space problem. It tests whether the near--far RaNN decomposition remains effective when the number of spatial dimensions and sampling points increases.

	In Example~\ref{eg:3d-whole}, we use the DD-RaNN-CM with two single-hidden-layer RaNNs.
	For a single-hidden-layer RaNN, the weight matrix $\bW$ is sampled from a uniform distribution $U(-r,r)$ and the bias matrix $\bb$ is constructed by the second strategy. The activation function is \(\rho(\bx)=e^{-x_1^2-x_2^2-x_3^2}\).

	\begin{figure}[H]
		\centering
		\begin{minipage}{0.50\linewidth}
			\centering
			\includegraphics[width=1\textwidth]{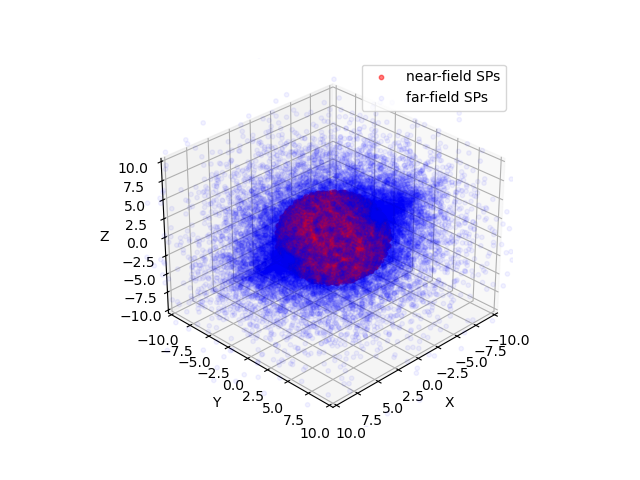}
			\label{fig:3d-whole-SPs}
		\end{minipage}
		\begin{minipage}{0.45\linewidth}
			\centering
			\includegraphics[width=\textwidth]{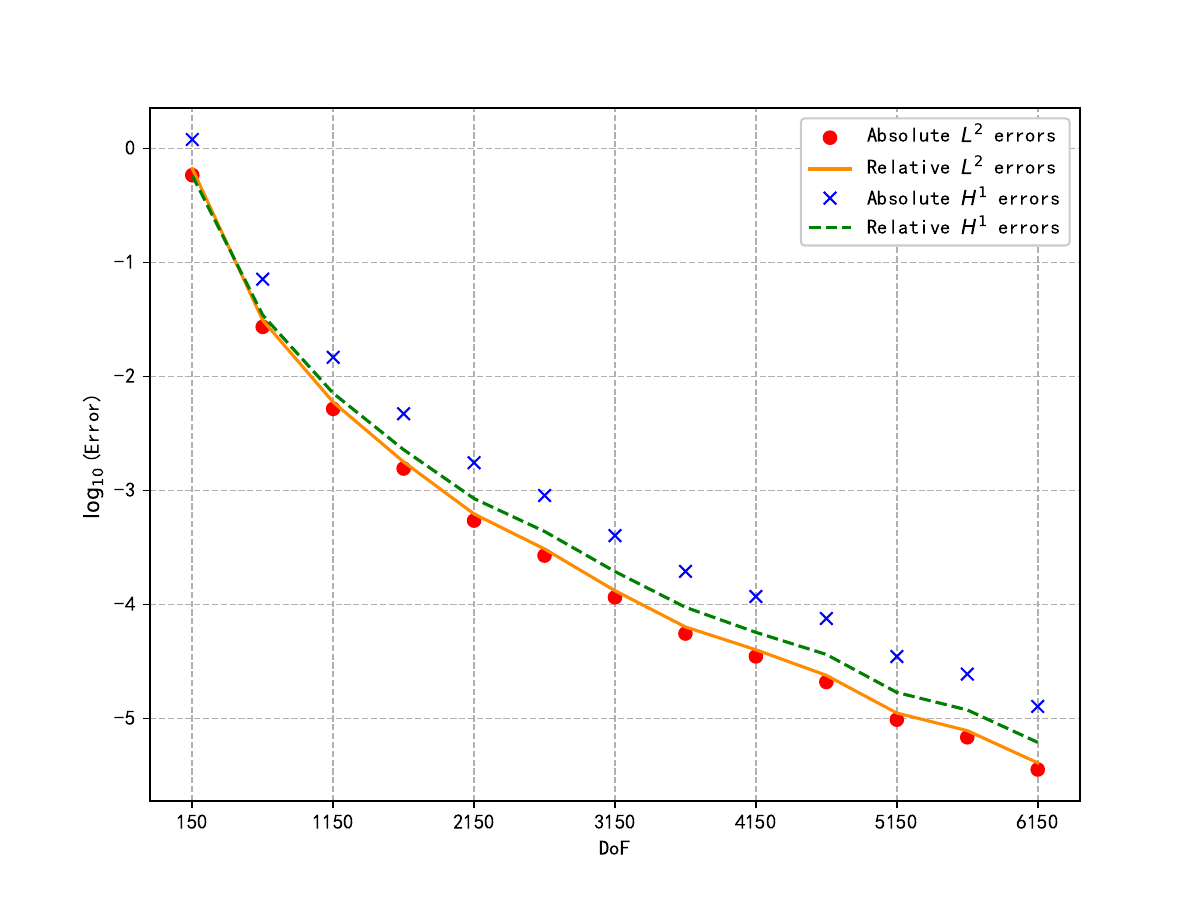}
			\label{fig:3d-whole-RANN}
		\end{minipage}
		\caption{Scatter plot of sampling points (SPs) for RaNN1 and RaNN2 within a cube ranging from -10 to 10 (left) and the logarithmic errors for DD-RaNN-CM in Example~\ref{eg:3d-whole}.}
		\label{fig:3d-whole}
	\end{figure}
	
	In this example, the near--far interface is a spherical surface with a radius of 5, centered at the origin. For the near-field subnetwork, the parameter \(r_1 = 2\), and the number of neurons \(m_1\) varies from 100 to 6100. For the far-field subnetwork, the parameter \(r_2 = 10\), and the number of neurons \(m_2 = 50\). Unless otherwise noted, ``DoF'' is defined as $m_1+m_2$. Each boundary is sampled with 5000 points, and the interior of both networks is sampled with 50000 points. When assembling the linear equations, the boundary conditions are enforced with a penalty parameter of 10.
	
	The left figure in Figure~\ref{fig:3d-whole} illustrates the point distribution from one of the experiments. The interior points are sampled in spherical coordinates. The radial distance is sampled from the normal distribution \(\mathcal{N}(2, 5)\), and the angles are sampled from the uniform distribution \(U(0, 2\pi)\). If the radius is negative, its absolute value is taken. The points on the near--far interface are also sampled in polar coordinates, with angles sampled from the uniform distribution \(U(0, 2\pi)\).

	\begin{table}[H]
		\centering
		\caption{$L^2$ error, relative $L^2$ error, $H^1$ error, and relative $H^1$ error for different numbers of neurons in Example~\ref{eg:3d-whole}.}
		\label{tab:3d-whole-errors}
		\begin{tabular}{c|c c c c}
			\toprule
			\textbf{$ DoF $} & $L^2$ Error & Relative $L^2$ Error & $H^1$ Error & Relative $H^1$ Error \\
			\midrule
					150  & $5.8198\times10^{-1}$ & $6.6552\times10^{-1}$ & $1.1962$ & $5.7939\times10^{-1}$ \\
					1150 & $5.2012\times10^{-3}$ & $5.9478\times10^{-3}$ & $1.4717\times10^{-2}$ & $7.1281\times10^{-3}$ \\
					2150 & $5.4472\times10^{-4}$ & $6.2291\times10^{-4}$ & $1.7545\times10^{-3}$ & $8.4978\times10^{-4}$ \\
					3150 & $1.1572\times10^{-4}$ & $1.3233\times10^{-4}$ & $4.0190\times10^{-4}$ & $1.9465\times10^{-4}$ \\
					4150 & $3.5105\times10^{-5}$ & $4.0144\times10^{-5}$ & $1.1777\times10^{-4}$ & $5.7042\times10^{-5}$ \\
					5150 & $9.7874\times10^{-6}$ & $1.1192\times10^{-5}$ & $3.5030\times10^{-5}$ & $1.6966\times10^{-5}$ \\
					6150 & $3.5790\times10^{-6}$ & $4.0928\times10^{-6}$ & $1.2777\times10^{-5}$ & $6.1884\times10^{-6}$ \\
			\bottomrule
		\end{tabular}
	\end{table}
	
	The errors shown in Table~\ref{tab:3d-whole-errors} and Figure~\ref{fig:3d-whole} are the averages obtained after performing 5 experiments for each parameter. The error for each experiment is calculated using the three-dimensional form of equation (\ref{mapped-error}), with 50 Gaussian points in each dimension, totaling 125,000 quadrature points. We can observe that as the number of neurons in RaNN1 increases, the error gradually decreases to the order of \(10^{-6}\).

	\begin{example}
		In this example, we consider a linear Schr\"{o}dinger equation:
		\begin{equation}
			\begin{cases}
				i u_t+u_{xx}=f,\quad x\in \mathbb{R},\quad t\in [0,2],\\
				u(x,0)=g(x),\quad x\in \mathbb{R},\\
				u(x,t)\rightarrow 0,\quad \|x\|_2\rightarrow \infty,
			\end{cases}
		\end{equation}
		The exact solution is 
		\begin{equation}
			u(x,t)=e^{-x^2}\cos(t) +i e^{-x^2}\sin(t).
		\end{equation}
		The source term $f$ and the initial condition $g(x)$ can be set by calculation. 
		\label{eg:sho_1+1}
	\end{example}

	This example demonstrates that the proposed DD-RaNN-CM framework can also be applied to time-dependent complex-valued equations. This complex-valued time-dependent example is not directly covered by the elliptic broken graph-norm theory in Section~\ref{sec:analysis}; it is included to illustrate the flexibility of the domain-decomposed RaNN construction. The three-network decomposition separates the left far-field, central region, and right far-field, while the spatio-temporal random features allow the output coefficients to be determined from one global least-squares system.

	In Example~\ref{eg:sho_1+1}, we use the DD-RaNN-CM with three single-hidden-layer RaNNs, and each network is spatiotemporal. For a single-hidden-layer RaNN, the weight matrix $\bW$ and the bias matrix $\bb$ are sampled from the uniform distribution $U(-r,r)$. The activation function is \(\rho(x,t)=\tanh(x)\cos(t)+i\tanh(x)\sin(t)\). As above, the use of \(\tanh\)-based non-decaying features in this numerical example lies outside the direct scope of the decaying-activation approximation theorem.

	\begin{figure}[H]
		\centering
		\begin{minipage}{0.45\linewidth}
			\includegraphics[width=\textwidth]{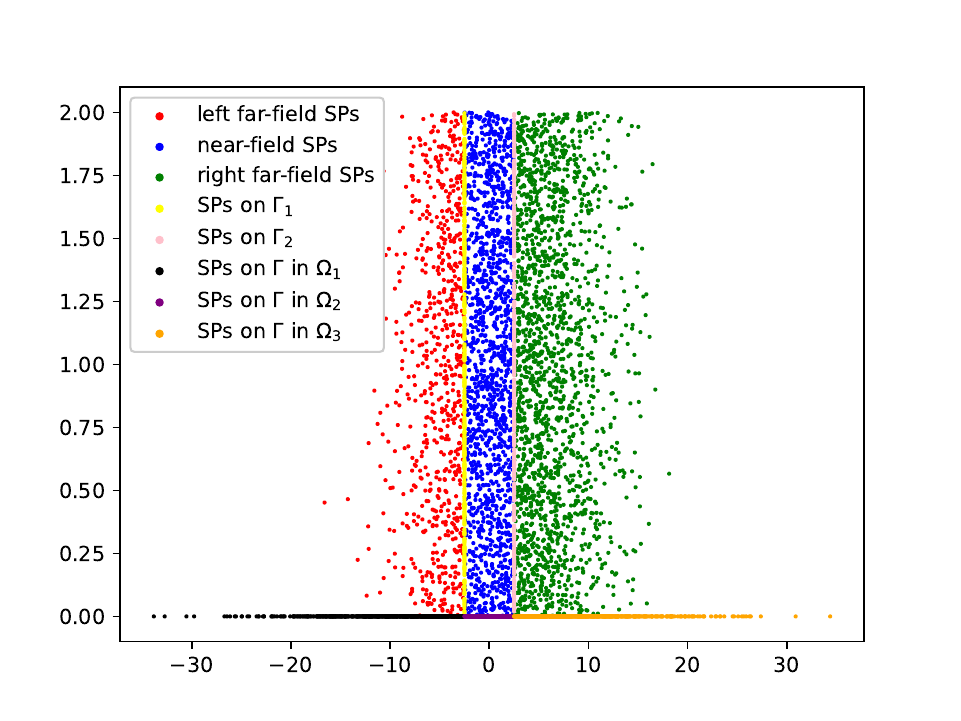}
			\label{fig:1+1_sample}
		\end{minipage}
		\caption{Scatter plot of the sampling points (SPs) distribution for Example~\ref{eg:sho_1+1}.}
		\label{fig:1+1_Sho_sample}
	\end{figure}	
	After applying the activation function, we obtain a trial function space $ \mathcal{U}(\bW,\bb, \rho) $ over the complex domain. Therefore, in this example, we can set the corresponding coefficients in (\ref{solu:Whole}) to be real-valued for convenience. A simple approach is to decompose the equation (\ref{ab4}) into two sub-equations based on its real and imaginary parts.

	In this example, the near--far interfaces are $x=\pm 2.5$. We use three spatio-temporal networks. For the near-field subnetwork we vary the neuron number $m_2\in\{40,60,\cdots,240,260,280\}$ and set the parameter \(r_2 = 0.5\), 
	while the left/right far-field subnetworks are fixed at $m_1=m_3=50$ and \(r_1 = r_3 = 3\). On the near--far interface, 500 points were sampled. For the initial condition, 1,500 points were sampled, and 4,000 points were sampled across the entire plane. Unless otherwise noted, ``DoF'' is defined as $m_1+m_2+m_3$.

	Figure~\ref{fig:1+1_Sho_sample} illustrates the point distribution from one of the experiments. The interior points are sampled in Euclidean coordinates, where \(x\) is sampled from the Gaussian distribution \(\mathcal{N}(2, 5)\), and \(t\) is sampled from the uniform distribution \(U(0, 2)\).
	For the near--far interfaces, the vertical coordinates of the sampled points are selected according to the uniform distribution \(U(0, 2)\). For the initial condition, the samples are selected according to the Gaussian distribution \(\mathcal{N}(0, 10)\).
	\begin{figure}[H]
		\centering
		\begin{minipage}{0.45\linewidth}
			\centering
			\includegraphics[width=\textwidth]{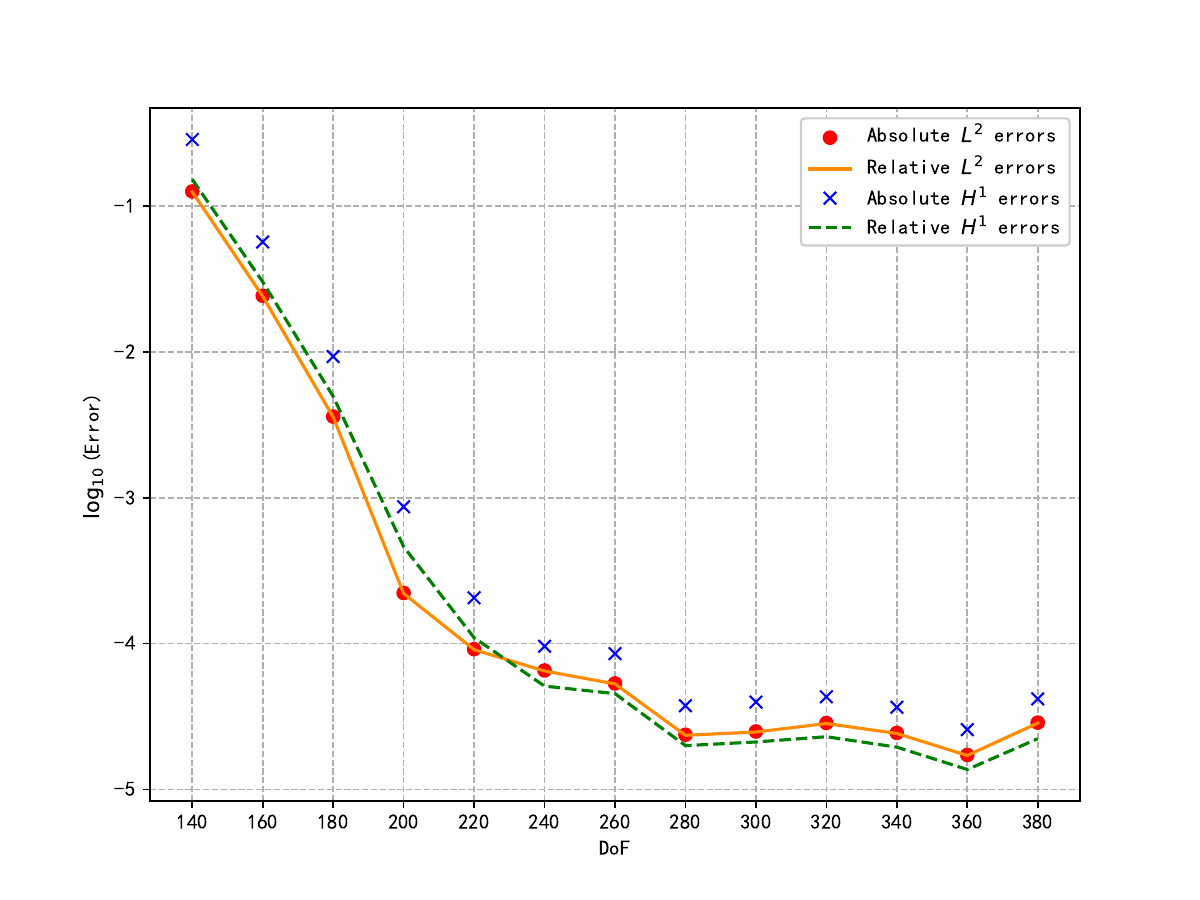}
			\label{fig:1+1 real}
		\end{minipage}
		\hspace{0.05\linewidth} 
		\begin{minipage}{0.45\linewidth}
			\centering
			\includegraphics[width=\textwidth]{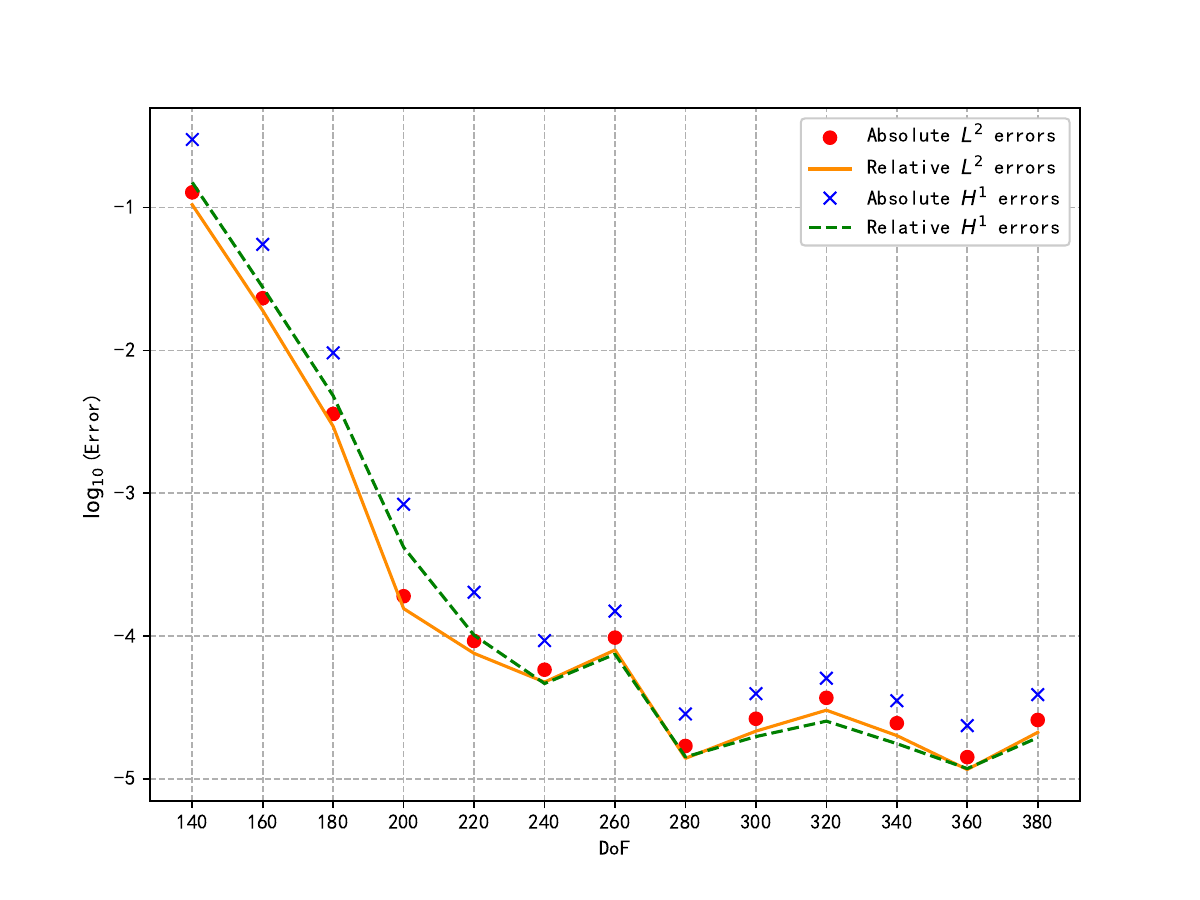}
			\label{fig:1+1 img}
		\end{minipage}
		\caption{Logarithmic errors for the real part (left) and imaginary part (right) of Example~\ref{eg:sho_1+1}.}
		\label{fig:1+1_Sho}
	\end{figure}

	The errors shown in Table~\ref{tab:errors_real}, Table~\ref{tab:errors_imaginary} and Figure~\ref{fig:1+1_Sho} are the averages obtained after performing 20 experiments for each parameter. The error for each experiment is computed using (\ref{mapped-error}) with 50 points. We observe that, as the number of neurons in the near-field subnetwork increases, the error gradually decreases and eventually stabilizes at around \(10^{-5}\).

\begin{table}[H]
	\centering
	\caption{Real part: $L^2$ error, relative $L^2$ error, $H^1$ error, and relative $H^1$ error for different numbers of neurons in Example~\ref{eg:sho_1+1}.}
	\label{tab:errors_real}
	\begin{tabular}{c|c c c c}
		\toprule
		{ $DoF$} & $L^2$ Error & Relative $L^2$ Error & $H^1$ Error & Relative $H^1$ Error \\
		\midrule
		140  & $1.2630 \times 10^{-1}$ & $1.2529 \times 10^{-1}$ & $2.8668 \times 10^{-1}$ & $1.5274 \times 10^{-1}$ \\ 
		160  & $2.4298 \times 10^{-2}$ & $2.4104 \times 10^{-2}$ & $5.6828 \times 10^{-2}$ & $3.0277 \times 10^{-2}$ \\ 
		180  & $3.6085 \times 10^{-3}$ & $3.5797 \times 10^{-3}$ & $9.3133 \times 10^{-3}$ & $4.9620 \times 10^{-3}$ \\ 
		200  & $2.2255 \times 10^{-4}$ & $2.2077 \times 10^{-4}$ & $8.6775 \times 10^{-4}$ & $4.6232 \times 10^{-4}$ \\ 
		260  & $5.3400 \times 10^{-5}$ & $5.2973 \times 10^{-5}$ & $8.5319 \times 10^{-5}$ & $4.5457 \times 10^{-5}$ \\ 
		320  & $2.8557 \times 10^{-5}$ & $2.8329 \times 10^{-5}$ & $4.3179 \times 10^{-5}$ & $2.3005 \times 10^{-5}$ \\ 
		380  & $2.8741 \times 10^{-5}$ & $2.8511 \times 10^{-5}$ & $4.1830 \times 10^{-5}$ & $2.2287 \times 10^{-5}$ \\
		\bottomrule
	\end{tabular}
\end{table}

\begin{table}[H]
	\centering
	\caption{Imaginary part: $L^2$ error, relative $L^2$ error, $H^1$ error, relative $H^1$ error for different numbers of neurons in Example~\ref{eg:sho_1+1}.}
	\label{tab:errors_imaginary}
	\begin{tabular}{c|c c c c}
		\toprule
		{$DoF$}& $L^2$ Error & Relative $L^2$ Error & $H^1$ Error & Relative $H^1$ Error \\
		\midrule
		140  & $1.2771 \times 10^{-1}$ & $1.0461 \times 10^{-1}$ & $2.9943 \times 10^{-1}$ & $1.4977 \times 10^{-1}$ \\ 
		160  & $2.3186 \times 10^{-2}$ & $1.8992 \times 10^{-2}$ & $5.5233 \times 10^{-2}$ & $2.7626 \times 10^{-2}$ \\ 
		180  & $3.5856 \times 10^{-3}$ & $2.9370 \times 10^{-3}$ & $9.6100 \times 10^{-3}$ & $4.8067 \times 10^{-3}$ \\ 
		200  & $1.8994 \times 10^{-4}$ & $1.5558 \times 10^{-4}$ & $8.3564 \times 10^{-4}$ & $4.1797 \times 10^{-4}$ \\ 
		260  & $9.7435 \times 10^{-5}$ & $7.9810 \times 10^{-5}$ & $1.4920 \times 10^{-4}$ & $7.4625 \times 10^{-5}$ \\ 
		320  & $3.6912 \times 10^{-5}$ & $3.0235 \times 10^{-5}$ & $5.0672 \times 10^{-5}$ & $2.5345 \times 10^{-5}$ \\ 
		380  & $2.5823 \times 10^{-5}$ & $2.1152 \times 10^{-5}$ & $3.8834 \times 10^{-5}$ & $1.9424 \times 10^{-5}$ \\
		\bottomrule
	\end{tabular}
\end{table}

	\section{Summary}
	\label{sec:summary}

In this paper, we proposed a domain-decomposed randomized neural network framework for solving partial differential equations on unbounded domains. The main idea is to represent different spatial regimes by different randomized subnetworks: a near-field subnetwork captures local structures, boundary effects, and geometric complexity, while a far-field subnetwork represents the exterior decay. The subnetworks are coupled through boundary and interface equations, and all output-layer coefficients are determined from linear least-squares systems.

Two complementary discretizations were developed. The DD-RaNN Petrov--Galerkin method treats semi-unbounded elliptic problems by combining randomized trial spaces with piecewise polynomial test functions and Gauss-type quadrature. The DD-RaNN collocation method handles fully unbounded domains, geometrically complicated exterior domains, and a time-dependent Schr\"odinger equation. In both cases, the method avoids nonlinear optimization over all neural-network parameters.

The theoretical analysis is formulated in a broken Sobolev framework, which is the natural setting for characteristic-function-based domain decomposition. Under suitable decay, activation, local representation, and stability assumptions, we proved a conditional bounded-parameter broken Sobolev approximation result and derived least-squares error estimates. The total error is decomposed into approximation, empirical-consistency/quadrature, and algebraic optimization components. For a practical \(L^2\)-type broken empirical loss, we also established a deterministic quadrature-consistency estimate. More general losses, such as negative Sobolev losses and strong trace-norm losses, require additional consistency arguments, and the decaying-activation theorem does not directly cover numerical tests using non-decaying activations such as \(\tanh\).

The numerical experiments demonstrate the effectiveness of the proposed methods for one-, two-, and three-dimensional unbounded-domain Poisson problems and for a one-dimensional time-dependent Schr\"odinger equation. The results indicate that the combination of near--far randomized trial spaces, interface coupling, and linear least-squares solvers can provide accurate approximations for the tested unbounded-domain PDEs.

Several issues remain for future work. The interface location, random-parameter ranges, and penalty parameters should be further optimized, possibly by adaptive strategies. It is also important to improve far-field representations for oscillatory or slowly decaying solutions and to extend the analysis to more general nonlinear, time-dependent, and multi-domain settings.

	\bibliographystyle{plain} 
	\bibliography{bibliography.bib} 

@article{Dissanayake1994NNPDE,
  author  = {Dissanayake, M. W. M. G. and Phan-Thien, N.},
  title   = {Neural-network-based approximations for solving partial differential equations},
  journal = {Communications in Numerical Methods in Engineering},
  volume  = {10},
  number  = {3},
  pages   = {195--201},
  year    = {1994},
  doi     = {10.1002/cnm.1640100303}
}

@article{Lagaris1998ANN,
  author  = {Lagaris, I. E. and Likas, A. and Fotiadis, D. I.},
  title   = {Artificial neural networks for solving ordinary and partial differential equations},
  journal = {IEEE Transactions on Neural Networks},
  volume  = {9},
  number  = {5},
  pages   = {987--1000},
  year    = {1998},
  doi     = {10.1109/72.712178}
}

@article{Raissi2019PINN,
  author  = {Raissi, Maziar and Perdikaris, Paris and Karniadakis, George Em},
  title   = {Physics-informed neural networks: A deep learning framework for solving forward and inverse problems involving nonlinear partial differential equations},
  journal = {Journal of Computational Physics},
  volume  = {378},
  pages   = {686--707},
  year    = {2019},
  doi     = {10.1016/j.jcp.2018.10.045}
}

@article{Raissi2017PINNPartI,
  author        = {Raissi, Maziar and Perdikaris, Paris and Karniadakis, George Em},
  title         = {Physics Informed Deep Learning (Part I): Data-driven Solutions of Nonlinear Partial Differential Equations},
  journal       = {arXiv preprint arXiv:1711.10561},
  year          = {2017},
  eprint        = {1711.10561},
  archivePrefix = {arXiv}
}

@article{Raissi2017PINNPartII,
  author        = {Raissi, Maziar and Perdikaris, Paris and Karniadakis, George Em},
  title         = {Physics Informed Deep Learning (Part II): Data-driven Discovery of Nonlinear Partial Differential Equations},
  journal       = {arXiv preprint arXiv:1711.10566},
  year          = {2017},
  eprint        = {1711.10566},
  archivePrefix = {arXiv}
}

@article{Sirignano2018DGM,
  author  = {Sirignano, Justin and Spiliopoulos, Konstantinos},
  title   = {DGM: A deep learning algorithm for solving partial differential equations},
  journal = {Journal of Computational Physics},
  volume  = {375},
  pages   = {1339--1364},
  year    = {2018},
  doi     = {10.1016/j.jcp.2018.08.029}
}

@article{Shen2009spctral,
  title={Some recent advances on spectral methods for unbounded domains},
  author={Shen, Jie and Wang, Lilian},
  journal={Communications in Computational Physics},
  volume={5},
  number={2-4},
  pages={195--241},
  year={2009}
}

@article{Jens2018unified,
   title={A unified deep artificial neural network approach to partial differential equations in complex geometries},
  author={Berg, Jens and Nystr{\"o}m, Kaj},
  journal={Neurocomputing},
  volume={317},
  pages={28--41},
  year={2018},
  publisher={Elsevier}
}

@article{Tang2023Physics,
   title={Physics-informed neural networks combined with polynomial interpolation to solve nonlinear partial differential equations},
  author={Tang, Siping and Feng, Xinlong and Wu, Wei and Xu, Hui},
  journal={Computers \& Mathematics with Applications},
  volume={132},
  pages={48--62},
  year={2023},
  publisher={Elsevier}
}

@article{Ding2023Finite,
   title={Finite difference method for time-fractional {K}lein--{G}ordon equation on an unbounded domain using artificial boundary conditions},
  author={Ding, Peng and Yan, Yubin and Liang, Zongqi and Yan, Yuyuan},
  journal={Mathematics and Computers in Simulation},
  volume={205},
  pages={902--925},
  year={2023},
  publisher={Elsevier}
}

@article{Tai2022Numerical,
   title={Numerical solution of coupled nonlinear {Klein-Gordon} equations on unbounded domains},
  author={Tai, Yinong and Li, Hongwei and Zhou, Zhaojie and Jiang, Ziwen},
  journal={Physical Review E},
  volume={106},
  number={2},
  pages={025317},
  year={2022},
  publisher={APS}
}

@article{JonathanW2023Greedy,
   title={Greedy training algorithms for neural networks and applications to {PDEs}},
  author={Siegel, Jonathan W and Hong, Qingguo and Jin, Xianlin and Hao, Wenrui and Xu, Jinchao},
  journal={Journal of Computational Physics},
  volume={484},
  pages={112084},
  year={2023},
  publisher={Elsevier}
}

@article{Ling2023Analysis,
    title={Analysis and {H}ermite spectral approximation of diffusive-viscous wave equations in unbounded domains arising in geophysics},
  author={Ling, Dan and Mao, Zhiping},
  journal={Journal of Scientific Computing},
  volume={95},
  pages={51},
  year={2023},
  publisher={Springer}
}

@article{zhang2024transferable,
  author  = {Zhang, Zezhong and Bao, Feng and Ju, Lili and Zhang, Guannan},
  title   = {Transferable Neural Networks for Partial Differential Equations},
  journal = {Journal of Scientific Computing},
  volume  = {99},
  number  = {1},
  pages   = {2},
  year    = {2024},
  doi     = {10.1007/s10915-024-02463-y}
}

@article{shang2023randomized,
  author  = {Shang, Yong and Wang, Fei and Sun, Jingbo},
  title   = {Randomized Neural Network with {P}etrov--{G}alerkin Methods for Solving Linear and Nonlinear Partial Differential Equations},
  journal = {Communications in Nonlinear Science and Numerical Simulation},
  volume  = {127},
  pages   = {107518},
  year    = {2023},
  doi     = {10.1016/j.cnsns.2023.107518},
  eprint  = {2201.12995},
  archivePrefix = {arXiv}
}

@article{liu2025integral,
  author  = {Liu, Xinliang and Mao, Tong and Xu, Jinchao},
  title   = {Integral Representations of {S}obolev Spaces via {R}eLU$^k$ Activation Function and Optimal Error Estimates for Linearized Networks},
  journal = {arXiv preprint arXiv:2505.00351},
  year    = {2025},
  doi     = {10.48550/arXiv.2505.00351},
  eprint  = {2505.00351},
  archivePrefix = {arXiv}
}

@article{Chen2023Analysis,
title={Analysis of absorbing boundary conditions for the anomalous diffusion in comb model on unbounded domain by finite volume method},
  author={Chen, Siyu and Liu, Lin and Li, Jiajia and Yang, Jingyu and Feng, Libo and Zhang, Jiangshan},
  journal={Applied Mathematics Letters},
  volume={144},
  pages={108712},
  year={2023},
  publisher={Elsevier}
}

@article{Chen2022TwoGrid,
  title={Two-grid finite volume element method for the time-dependent {S}chr{\"o}dinger equation},
  author={Chen, Chuanjun and Lou, Yuzhi and Hu, Hanzhang},
  journal={Computers \& Mathematics with Applications},
  volume={108},
  pages={185--195},
  year={2022},
  publisher={Elsevier}
}

@article{Xie2023fastBDF2,
   title={A fast {BDF2} {G}alerkin finite element method for the one-dimensional time-dependent {S}chr{\"o}dinger equation with artificial boundary conditions},
  author={Xie, Jiangming and Li, Maojun},
  journal={Applied Numerical Mathematics},
  volume={187},
  pages={89--106},
  year={2023},
  publisher={Elsevier}
}

@article{Guo2021Finite,
  title={Finite difference/generalized {H}ermite spectral method for the distributed-order time-fractional reaction-diffusion equation on multi-dimensional unbounded domains},
  author={Guo, Shimin and Chen, Yaping and Mei, Liquan and Song, Yining},
  journal={Computers \& Mathematics with Applications},
  volume={93},
  pages={1--19},
  year={2021},
  publisher={Elsevier}
}

@article{Guo2022Dissipation,
   title={ Dissipation-preserving rational spectral-{G}alerkin method for strongly damped nonlinear wave system involving mixed fractional {L}aplacians in unbounded domains},
  author={Guo, Shimin and Yan, Wenjing and Li, Can and Mei, Liquan},
  journal={Journal of Scientific Computing},
  volume={93},
  number={2},
  pages={53},
  year={2022},
  publisher={Springer}
}

@article{Dina2023Tanh,
   title={Tanh {J}acobi spectral collocation method for the numerical simulation of nonlinear {S}chr{\"o}dinger equations on unbounded domain},
  author={Mostafa, Dina and Zaky, Mahmoud A and Hafez, Ramy M and Hendy, Ahmed S and Abdelkawy, Mohamed A and Aldraiweesh, Ahmed A},
  journal={Mathematical Methods in the Applied Sciences},
  volume={46},
  number={1},
  pages={656--674},
  year={2023},
  publisher={Wiley Online Library}
}

@article{Carolin2022Magnetostatic,
   title={Magnetostatic simulations with consideration of exterior domains using the scaled boundary finite element method},
  author={Birk, Carolin and Reichel, Maximilian and Schr{\"o}der, J{\"o}rg},
  journal={Computer Methods in Applied Mechanics and Engineering},
  volume={399},
  pages={115362},
  year={2022},
  publisher={Elsevier}
}

@article{Silvia2022TwoFEMBEM,
  title={Two {FEM-BEM} methods for the numerical solution of {2D} transient elastodynamics problems in unbounded domains},
  author={Falletta, Silvia and Monegato, Giovanni and Scuderi, Letizia},
  journal={Computers \& Mathematics with Applications},
  volume={114},
  pages={132--150},
  year={2022},
  publisher={Elsevier}
}

@article{Singh2023Rate,
  title={Rate of convergence of two moments consistent finite volume scheme for non-classical divergence coagulation equation},
  author={Singh, Mehakpreet},
  journal={Applied Numerical Mathematics},
  volume={187},
  pages={120--137},
  year={2023},
  publisher={Elsevier}
}

@article{Zhang2021Spectral,
  title={Spectral method for multi-dimensional problems of high order on unbounded domains using generalized {L}aguerre functions},
  author={Zhang, Chao and Tao, Dongya and Ding, Pan},
  journal={International Journal of Computer Mathematics},
  volume={98},
  number={10},
  pages={2040--2059},
  year={2021},
  publisher={Taylor \& Francis}
}

@article{E2018deepRitz,
   title={The deep {R}itz method: a deep learning-based numerical algorithm for solving variational problems},
  author={E, Weinan and Yu, Bing},
  journal={Communications in Mathematics and Statistics},
  volume={6},
  number={1},
  pages={1--12},
  year={2018},
  publisher={Springer}
}

@article{sun2022local,
	title={Local randomized neural networks with discontinuous {G}alerkin methods for partial differential equations},
	author={Sun, Jingbo and Dong, Suchuan and Wang, Fei},
	journal={Journal of Computational and Applied Mathematics},
	volume={445},
	pages={115830},
	year={2024},
	publisher={Elsevier}
}

@article{dong2021LocalELM,
	title={Local extreme learning machines and domain decomposition for solving linear and nonlinear partial differential equations},
	author={Dong, Suchuan and Li, Zongwei},
	journal={Computer Methods in Applied Mechanics and Engineering},
	volume={387},
	pages={114129},
	year={2021},
	publisher={Elsevier}
}

@article{lin2025finite,
  title={A finite element method for elliptic optimal control problem in the unbounded domain},
  author={Lin, Jitong and Chen, Yanping and Huang, Yunqing},
  journal={Journal of Applied Mathematics and Computing},
  volume={71},
  number={3},
  pages={4375--4396},
  year={2025},
  publisher={Springer}
}

@article{tissaoui2025efficient,
  title={Efficient spectral element method for the {E}uler equations on unbounded domains},
  author={Tissaoui, Yassine and Kelly, James F and Marras, Simone},
  journal={Applied Mathematics and Computation},
  volume={487},
  pages={129080},
  year={2025},
  publisher={Elsevier}
}

@article{kuhn2024explicit,
  title={Explicit time-domain analysis of wave propagation in unbounded domains using the scaled boundary finite element method},
  author={Kuhn, T and Gravenkamp, H and Birk, C},
  journal={Engineering Analysis with Boundary Elements},
  volume={168},
  pages={105891},
  year={2024},
  publisher={Elsevier}
}

@article{zhu2024highly,
  title={A highly efficient numerical method for the time-fractional diffusion equation on unbounded domains},
  author={Zhu, Hongyi and Xu, Chuanju},
  journal={Journal of Scientific Computing},
  volume={99},
  number={2},
  pages={47},
  year={2024},
  publisher={Springer}
}

@article{guo2023new,
  title={A new absorbing layer approach for solving the nonlinear {S}chr{\"o}dinger equation},
  author={Guo, Feng and Dai, Weizhong},
  journal={Applied Numerical Mathematics},
  volume={189},
  pages={88--106},
  year={2023},
  publisher={Elsevier}
}

@article{li2019numerical,
  title={Numerical solution of the regularized logarithmic {S}chr{\"o}dinger equation on unbounded domains},
  author={Li, Hongwei and Zhao, Xin and Hu, Yunxia},
  journal={Applied Numerical Mathematics},
  volume={140},
  pages={91--103},
  year={2019},
  publisher={Elsevier}
}

@article{yang2026adaptive,
  title={Adaptive-Distribution Randomized Neural Networks for {PDE}s: A Low-Dimensional Distribution-Learning Framework},
  author={Yang, You and Wang, Fei},
  journal={arXiv preprint arXiv:2604.23999},
  year={2026}
}

@article{dang2024adaptive,
  title={Adaptive-Growth Randomized Neural Networks for {PDE}s: Algorithms and Numerical Analysis},
  author={Dang, Haoning and Wang, Fei and Jiang, Song},
  journal={arXiv preprint arXiv:2408.17225},
  year={2024}
}

@article{chen2022bridging,
  title={Bridging traditional and machine learning-based algorithms for solving {PDE}s: the random feature method},
  author={Chen, Jingrun and Chi, Xurong and E, Weinan and Yang, Zhouwang},
  journal={Journal of Machine Learning},
  volume={1},
  number={3},
  pages={268--298},
  year={2022}
}

@article{sun2026randomized,
  title={Randomized Neural Networks for Partial Differential Equation on Static and Evolving Surfaces},
  author={Sun, Jingbo and Wang, Fei},
  journal={arXiv preprint arXiv:2603.01689},
  year={2026}
}

@article{shang2025overlapping,
  title={Overlapping {S}chwarz preconditioners for randomized neural networks with domain decomposition},
  author={Shang, Yong and Heinlein, Alexander and Mishra, Siddhartha and Wang, Fei},
  journal={Computer Methods in Applied Mechanics and Engineering},
  volume={442},
  pages={118011},
  year={2025},
  publisher={Elsevier}
}

@article{li2025local,
  title={Local randomized neural networks with finite difference methods for interface problems},
  author={Li, Yunlong and Wang, Fei},
  journal={Journal of Computational Physics},
  volume={529},
  pages={113847},
  year={2025},
  publisher={Elsevier}
}

@article{shang2024randomized,
  title={Randomized neural networks with {P}etrov--{G}alerkin methods for solving linear elasticity and {N}avier--{S}tokes equations},
  author={Shang, Yong and Wang, Fei},
  journal={Journal of Engineering Mechanics},
  volume={150},
  number={4},
  pages={04024010},
  year={2024},
  publisher={American Society of Civil Engineers}
}

@article{dang2024local,
  title={Local randomized neural networks with hybridized discontinuous {P}etrov--{G}alerkin methods for {S}tokes--{D}arcy flows},
  author={Dang, Haoning and Wang, Fei},
  journal={Physics of Fluids},
  volume={36},
  number={8},
  pages={087138},
  year={2024},
  publisher={AIP Publishing}
}

\end{document}